\documentclass[a4paper, reqno, 11pt]{amsart}
\makeatletter

\@addtoreset{equation}{section}
\makeatother
\usepackage{amsmath}
\usepackage{amssymb}
\usepackage{amsthm}
\usepackage[dvipdfmx]{graphicx}
\usepackage[dvipdfm]{hyperref}
\usepackage{tikz}
\newtheorem{Thm}{Theorem}[section]

\newtheorem{Lem}[Thm]{Lemma}
\newtheorem{Prop}[Thm]{Proposition}

\newtheorem{Cor}[Thm]{Corollary}

\theoremstyle{definition}

\newtheorem{Def}[Thm]{Definition}
\newtheorem{Rem}[Thm]{Remark}

\newcommand{\Z}{\mathbb{Z}} 
\newcommand{\Pp}{\mathbb{P}_p}
\newcommand{\R}{\mathbb{R}}
\newcommand{\ca}{\textup{Cap}} 
\newcommand{\va}{\textup{Var}}

\begin{document}

\title[Union of random walk and percolation]{Unions of random walk and percolation on infinite graphs}
\author{Kazuki Okamura}
\address{School of General Education, Shinshu University, 3-1-1, Asahi, Matsumoto, Nagano, JAPAN} 
\email{kazukio@shinshu-u.ac.jp} 
\keywords{Bernoulli percolation, random walk}
\subjclass[2010]{60K35, 60K37}

\begin{abstract}
We consider a random object that is associated with both random walks and random media, specifically, 
the superposition of a configuration of subcritical Bernoulli percolation on an infinite connected graph and the trace of the simple random walk on the same graph.
We investigate asymptotics for the number of vertices of the enlargement of the trace of the walk until a fixed time, when the time  tends to infinity. 
This process is more highly self-interacting than the range of random walk, which yields difficulties.    
We show a law of large numbers on vertex-transitive transient graphs. 
We compare the process on a vertex-transitive graph with the process on a finitely modified graph of the original vertex-transitive graph and show their behaviors are similar.     
We show that the process fluctuates almost surely on a certain non-vertex-transitive graph. 
On the two-dimensional integer lattice, 
by investigating the size of the boundary of the trace,  
we give an estimate for variances of the process implying a law of large numbers.   
We give an example of a graph with unbounded degrees on which the process behaves in a singular manner.  
As by-products, some results for the range and the boundary, which will be of independent interest, are obtained.   
\end{abstract}

\maketitle

\setcounter{tocdepth}{1}
\tableofcontents

\section{Introduction and Main Results}
  
Consider Bernoulli bond percolation on an infinite connected graph $G$. 
Assume that each edge of $G$ is open with probability $p \in [0,1]$ and closed with probability $1-p$. 
It seems natural to consider the following informal question: if we add Bernoulli percolation on $G$ to a subgraph $H$ of $G$, 
then how much does $H$ change? 
By this motivation, 
the author \cite{O} proposed a model in which a configuration of Bernoulli percolation on an infinite connected graph $G$ is added to a (deterministic or random) subgraph $H$ independently, 
and then, asked whether the probability that a property $\mathcal{P}$ of $H$ remains to be satisfied for the enlargement of $H$ is less than $1$, as $p$ increases.   
If $H$ is a single vertex of $G$ and $\mathcal{P}$ is the property that the graph has an infinite number of vertices, 
we obtain the definition of Hammersley's critical probability. 
In \cite{O}, an important example of such a subgraph $H$ and a property $\mathcal{P}$ is the case that $H$ is the trace of the simple random walk $\{S_n\}_n$ on $G$ and $\mathcal{P}$ is that the (enlarged) graph is recurrent, that is, the simple random walk on the graph is recurrent. 

In this paper, 
we terminate the simple random walk on an infinite connected simple graph at a time $n$, and 
consider asymptotics for the number of vertices of the enlargement of the trace {\it until the time $n$} by subcritical Bernoulli percolation on the same graph.    
The main focus of \cite{O} is the case that $H$ is infinite.  
On the other hand, we focus on the case that $H = H_n$ is {\it finite}, and Furthermore depends on the time $n$.  
We denote by $U_n$ the number of the vertices of the union of the simple random walk until time $n$ and Bernoulli percolation (see Definition \ref{union}). 
Our purpose is to investigate asymptotics for $\{U_n\}_n$.  
We compare $\{U_n\}_n$ with $\{R_n\}_n$, which is the number of the vertices that are visited by the simple random walk until time $n$, and how much the behaviors of $\{U_n\}_n$ depend on the parameter $p$ of Bernoulli percolation. 
If we add no percolation clusters (i.e. $p = 0$),  
then $U_n$ is identical with $R_n$.  
If we add non-trivial percolation clusters  (i.e. $p > 0$), then $\{U_n\}_n$ becomes more complicated. 
$\{U_n\}_n$ is more highly self-interacting than $\{R_n\}_n$, which is already self-interacting.  
Such self-interacting nature yields difficulties.   

This process is associated with both random walks and random media.  
Of such objects, random walk in random environment (RWRE), including random walk on percolation cluster, is well-known and has been intensively studied. 
Random walk in random scenery (RWRS) is another known model. 
It is a random process such that at each time, both the step taken by the walk and the scenery value at the site that is visited are registered. 
To the best of our knowledge, our framework is different from any known studies on RWRE and on RWRS. 

We first show a strong law of large numbers (LLN) for $\{U_n\}_n$ on vertex-transitive graphs.  
The almost sure limit of $U_n/n$ depends on the parameter of the percolation $p$ and is denoted by $c_p$. 
We consider properties of $c_p$ as a function of $p$. 
Specifically, we show that $c_p$ is analytic and has at least linear growth. 
We consider how much the behavior of $\{U_n\}_n$ changes if we modify $G$.  
Long-time behaviors of $\{U_n / n\}_n$ and $\{E[U_n] -c_p n\}_n$ are stable with respect to finite modifications of vertex-transitive transient graphs. 
(See Definition \ref{def-fm} for the definition of finite modification.)      
It is shown that $\{U_n / n\}_n$ fluctuates almost surely on a certain non-vertex-transitive transient graph. 
On the two-dimensional integer lattice, 
by investigating the size of the boundary of the trace of random walk,  
we give upper bounds for variances and LLN for $\{U_n\}_n$.  
We give an example of a recurrent graph with {\it unbounded} degrees on which $\{U_n\}_n$ behaves in a ``singular" manner.    
We consider positive and negative exponentials of $\{U_n\}_n$, and obtain results similar to Hamana \cite{H} and Donsker-Varadhan \cite{DV79} which  studied Laplace transform of the range of simple random walk. 
In the Appendix, behaviors of $\{E^{\Pp}[U_n]\}_n$ are considered. 
Here $E^{\Pp}$ is the expectation with respect to percolation. 
Roughly speaking, 
$E^{\Pp}[U_n]$ is the process obtained by eliminating the randomness of percolation.  
It is somewhat easier to analyze than $\{U_n\}_n$.    

Many results are similar to those for the range $\{R_n\}_n$. 
However, our proofs are different from those for the corresponding results, because  we need to deal with highly self-interacting nature of $\{U_n\}_n$. 
In Corollary \ref{disconti} below, we state that a discontinuity occurs in the ratio of the logarithm of the Laplace transforms of $U_n$ and $R_n$ for $\Z^3$.
As by-products of this research, 
we obtain some results for the range and the size of the boundary of the trace, which will be of independent interest.   

\subsection{Framework and Notation}  

We introduce some notation.  
The expectation with respect to a probability measure $\mu$ is denoted by $E^{\mu}$.
For two probability measures $\mu$ and $\nu$, $\mu \otimes \nu$ denotes the product measure of $\mu$ and $\nu$.   
We let the infimum of an emptyset $\inf \emptyset$ be $+\infty$. 
For two functions $f, g$ on the integers, 
$f(n) \simeq g(n)$ means that there are some constants $0 < c < C < +\infty$ such that $c|g(n)| \le |f(n)| \le C|g(n)|$ for any large $n$.  
$f(t) \sim g(t)$ means that $f(t)/g(t) \to 1$ as either $t \to 0$ or $t \to \infty$.   
Readers will immediately see which of the two cases of the limit are considered, case by case.

Let $G = (V(G), E(G))$ be an infinite connected simple graph. 
For ease of notation, we often denote $V(G)$ by $G$.  
{\it We assume that it has bounded degrees, unless stated otherwise.}
Let $\Delta_G$ be the maximal degree of $G$.          
{\it If we give a subset $H$ of $V(G)$ and do not refer the set of edges, then, the graph considered is the induced subgraph of $H$. }
Let $d$ be the graph distance of $G$ and $B(x, n) := \{y \in V(G) : d(x,y) \le n\}$. 
For $A \subset G$, denote the cardinality of $A$ by $|A|$, and the complement of $A$ by $G \setminus A$ or $A^c$. 
Let $\textup{diam}(A)$ be the diameter of $A$, that is, the supremum of the distance between two points in $A$. 
In this paper, if $G = \Z^d$ and the set of edges are not referred, then it is the nearest-neighbor model, that is, the set of edges is the collection of two adjacent vertices of $G$. 
Let $| \cdot |_{\infty}$ be the infinity norm of $\Z^d$ and $B_{\infty}(x,r)$ be the open ball having center $x$ and radius $r$ with respect to $| \cdot |_{\infty}$.  
For disjoint subsets $A$ and $B$ of $V(G)$, 
the effective resistance $R_{\textup{eff}}(A, B)$ between $A$ and $B$ is defined by 
\[ R_{\textup{eff}}(A, B)^{-1} := \inf\left\{ \sum_{\{x,y\} \in E(G)} \left(f(x) - f(y) \right)^2 : f = 1 \textup{ on } A, \ f = 0 \textup{ on } B \right\}.  \]

Let $(S_{n})_{n \ge 0}$ be the simple random walk  on $G$. 
Let $P^{x}$ be the law of $(S_{n})_{n}$ starting at $x \in G$. 
We say that $G$ is {\it transient} or {\it recurrent} if the simple random walk on $G$ is transient or recurrent, respectively.  
Denote $\{S_m, \dots, S_n\}$ by $S[m, n]$. 
We call it the trace of random walk. 
Define the random walk range up to time $n$ by 
\[ R_{n} := \left| S[0,n] \right| = \left| \left\{S_0, \dots, S_n \right\}\right|.\]  
We define 
\[ T_{A} := \inf\left\{n \ge 1 : S_{n} \in A \right\} \textup{ and }  H_A := \inf\left\{n \ge 0 : S_n \in A \right\}, \ \ A \subset G. \]   

Consider Bernoulli {\it bond} percolation on $G$.
Let $\Pp$ be the Bernoulli measure with parameter $p$. 
Let $C_{x}$ be the open cluster containing a vertex $x$ of $G$.  
Let $p_{T}(G)$ be Temperley's critical probability, 
that is, 
\[ p_{T}(G) := \inf\left\{ p \in [0,1] : E^{\Pp}[|C_{o}|] = +\infty \right\}. \]
This value does not depend on the choice of $o$. 
We have that $p_{T}(G) \ge 1/(\Delta_G - 1)$. 
(See Bollob\'as and Riordan \cite[Chapter 1]{BR}, for example.)   
{\it  In all assertions in this paper, we assume that $p < p_{T}(G)$. } 

Let $o$ be a vertex of $G$. 
Let $\widetilde P^{o,p}$ be the product measure $P^o \otimes \Pp$ of $P^o$  and $\Pp$. 
Precisely, 
$P^x$ is a probability measure on the path space of random walks on $G$, and $\widetilde P^{o,p}$ is a probability measure on the product space of the path space of the random walks on $G$ 
and $\{0,1\}^{E(G)}$. 
Let $\widetilde E^{o,p}$ be $E^{P^o \otimes \Pp}$, and let $\va_{o, p}$ be the variance with respect to $P^o \otimes \Pp$.

We say that $G$ is  {\it vertex-transitive} if the number of the equivalent classes of $G$ is exactly one, 
where $x$ and $y$ are equivalent if there is a graph automorphism $\gamma$ of $G$ such that $\gamma(x) = y$.  
If $G$ is vertex-transitive, then, the law of $\widetilde P^{o,p}$ do not depend on $o$, and hence 
we drop the $o$ and write $\widetilde P^{p} = \widetilde P^{o,p}$, $\widetilde E^{p} = \widetilde E^{o,p}$, $E = E^{P^o}$ and $\va_{p} = \va_{o, p}$.  
We clarify the dependence of these on $G$ if needed.  

\begin{Def}[Volumes of unions of random walk  and percolation]\label{union}
\[ U_{n} := \left|  \bigcup_{x \in \{S_0, \dots, S_n\}} C_x \right|, \ \ n \ge 0. \]   
\end{Def}

Here, $U_n$ is increasing  with respect to $n$. 
Since $x \in C_x$,  
it holds that $U_{n} \ge R_{n}$. 
If $p=0$, then $U_{n} = R_{n}$, $\widetilde P^{o,p}$-a.s.  

\begin{Def}\label{def-fm} 
We say that $G^{\prime}$ is a {\it finite modification} of $G$ 
if there exist two finite subsets $D$ on $G$ and $D^{\prime}$ on $G^{\prime}$ 
such that there is an isomorphism (see Diestel  \cite[Section 1.1]{D} for the definition of this terminology) $\phi : G \setminus D  \to G^{\prime} \setminus D^{\prime}$.  
We will see that $G^{\prime}$ is roughly isometric to $G$. (See Woess \cite[Definition 3.7]{W} for the definition of being roughly isometric.)  
\end{Def}  

\subsection{Results for transient graphs} 

\begin{Thm}[Law of large numbers]\label{DisLLN}
Assume that $p < p_T (G)$. 
Let $G$ be a vertex-transitive graph, and $o$ be a vertex of $G$. 
Let 
\begin{equation}\label{cp}
c_p = c_{G, p} := E^{\Pp}\left[|C_{o}| P^{o}\left(T_{C_{o}} = +\infty\right)\right].   
\end{equation} 
Then, for any $1 \le q < +\infty$, 
\begin{equation}\label{LLN} 
\lim_{n \to \infty} \frac{U_{n}}{n} = c_{p}, \ \textup{ $\widetilde P^{o, p}$-a.s. and in $L^{q}(\widetilde P^{o, p})$}.
\end{equation}  
\end{Thm}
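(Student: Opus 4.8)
The plan is to imitate the classical treatment of the range $R_n$, replacing the ``new site'' indicator by a ``new cluster'' indicator, and then to absorb the extra self-interaction coming from the shared percolation field. First I would record the \emph{last-visit decomposition}
\begin{equation*}
U_n = \sum_{k=0}^n |C_{S_k}| \, \mathbf{1}\{S_l \notin C_{S_k} \text{ for all } k < l \le n\},
\end{equation*}
which holds because every open cluster meeting $S[0,n]$ is counted exactly once, namely at the last time the walk lies in it. Conditioning on the percolation configuration (which is independent of the walk) and applying the Markov property at time $k$, vertex-transitivity makes the conditional expectation of the $k$-th term independent of the walk endpoint, so that
\begin{equation*}
\widetilde E^{o,p}\bigl[|C_{S_k}| \, \mathbf{1}\{\cdots\}\bigr] = a_{n-k}, \qquad a_m := E^{\Pp}\bigl[|C_o| \, P^o(T_{C_o} > m)\bigr].
\end{equation*}
Since $p < p_T(G)$ gives $E^{\Pp}[|C_o|] < \infty$, dominated convergence yields $a_m \downarrow E^{\Pp}[|C_o| P^o(T_{C_o} = \infty)] = c_p$, whence $\widetilde E^{o,p}[U_n] = \sum_{m=0}^n a_m$ and $\widetilde E^{o,p}[U_n]/n \to c_p$ by Cesàro averaging.

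For almost sure convergence I would invoke Kingman's subadditive ergodic theorem. The set identity $\bigcup_{x\in S[0,n+m]}C_x = \bigcup_{x\in S[0,n]}C_x \cup \bigcup_{x\in S[n,n+m]}C_x$ gives the subadditivity $U_{n+m} \le U_n + U_m\circ\theta^n$, where $\theta$ is the time shift of the walk, and the integrability $\widetilde E^{o,p}[U_1] \le 2E^{\Pp}[|C_o|] < \infty$ holds. Once the shift is realized as a measure-preserving ergodic transformation---the natural choice being the environment viewed from the particle, under which the walk increments and the translated percolation field are jointly stationary---Kingman's theorem gives $U_n/n \to \xi$ almost surely and in $L^1$ with $\xi$ invariant, and ergodicity forces $\xi = \inf_n \widetilde E^{o,p}[U_n]/n = \lim_n \widetilde E^{o,p}[U_n]/n = c_p$. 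Alternatively, since $U_n$ is monotone, an estimate $\va_p(U_n) = o(n^2)$ with polynomial decay, combined with Borel--Cantelli along $n_k = k^2$ and a squeezing argument using $n_{k+1}/n_k \to 1$, reaches the same conclusion.

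For the $L^q$ statement I would establish uniform integrability of $\{(U_n/n)^q\}_n$. Bounding $U_n \le \sum_{k=0}^n |C_{S_k}|$ and applying Minkowski's inequality gives $\sup_n \|U_n/n\|_{L^{q+1}(\widetilde P^{o,p})} \le 2\,\|\,|C_o|\,\|_{L^{q+1}(\Pp)}$; since $G$ is vertex-transitive and $p < p_T(G)$, the subcritical regime has exponentially decaying cluster-size tails, so $E^{\Pp}[|C_o|^r] < \infty$ for every $r$. Thus $U_n/n$ is bounded in $L^{q+1}$, which together with the almost sure limit $c_p$ upgrades the convergence to $L^q$ for each $1 \le q < \infty$.

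The main obstacle I anticipate is checking that the self-interacting functional $U_n$ genuinely fits the hypotheses of the subadditive ergodic theorem: because a single cluster couples the walk's visits at widely separated times and the percolation field is shared across all increments, one must set up the ``point of view of the particle'' dynamics with care and verify its ergodicity on a general vertex-transitive (not necessarily Cayley) graph. This shared-cluster dependence is exactly what makes $U_n$ more highly self-interacting than $R_n$; should one instead pursue the variance route, the same dependence reappears as the difficulty of bounding $\mathrm{Cov}(|C_{S_j}|\mathbf 1\{\cdots\}, |C_{S_k}|\mathbf 1\{\cdots\})$, which one would control using transience (so that $S_j$ and $S_k$ are typically far apart) together with the exponential decay of cluster sizes.
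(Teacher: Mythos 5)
Your mean computation and your $L^q$ step are sound and essentially coincide with the paper's: the last-exit decomposition together with the Markov property and vertex-transitivity gives $\widetilde E^{o,p}[U_n] = \sum_{m=0}^{n} a_m$ with $a_m := E^{\Pp}[|C_o| P^o(T_{C_o} > m)] \downarrow c_p$, and uniform integrability of $\{(U_n/n)^q\}_n$ follows from the crude bound $U_n \le \sum_{k \le n} |C_{S_k}|$ plus the Antunovi\'c--Veseli\'c subcritical tail estimate (the paper routes this through exponential moments and H\"older rather than Minkowski in $L^{q+1}$, but both arguments work and rest on the same input).

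The genuine gap is in the almost sure convergence, and it is exactly the point you flag as ``the main obstacle'' and then leave unresolved. Subadditivity, stationarity and integrability only give, via Liggett's theorem, a possibly \emph{random} limit $\xi$; identifying $\xi = c_p$ requires ergodicity of the stationary sequences $\{U_{nl,(n+1)l}\}_{n \ge 0}$, and this is precisely where the self-interaction through the shared percolation field must be confronted --- it is the content of the paper's Lemma~\ref{mix}. There the author proves strong mixing directly: decompose $\{U_{0,l}=k_1\}$ over the finitely many possible trajectories $(x_0,\dots,x_l)$, introduce the event $E_m$ that $S_m$ has left a fixed finite neighborhood of that trajectory (its probability tends to $1$ by the universal heat-kernel bound $P^o(S_n=o) \le Cn^{-1/2}$, valid on every infinite graph), and note that on $E_m$ the two cluster-volume events are determined by percolation configurations in disjoint finite regions, hence are independent. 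Your proposed mechanism --- Kingman's theorem driven by the ``environment viewed from the particle'' --- has an additional structural problem in the stated generality: on a vertex-transitive graph that is not a Cayley graph there is no canonical re-rooting automorphism, so the measure-preserving shift you need is not even well defined, let alone provably ergodic; the paper's formulation via stationary sequences and a direct mixing estimate is designed to avoid exactly this. Your fallback route (a variance bound $\va_{o,p}(U_n) = o(n^2)$ with polynomial decay plus Borel--Cantelli along $n_k = k^2$) is likewise only sketched, and the covariance control it requires is the same asymptotic-independence statement in disguise. So the decisive ingredient of the proof is missing from the proposal.
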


As we see in Theorem \ref{fluc}, 
we cannot define $c_p$ for a certain non-vertex-transitive graph.  
If $G$ is transient, then,  by \eqref{LLN}, 
\[ c_p \ge c_{0} = P^{o}\left(T_{o} = +\infty\right) > 0. \]  
On the other hand, if $G$ is recurrent, then $c_p = 0$ for any $p$. 

We will show Theorem \ref{DisLLN} 
by applying Liggett's subadditive ergodic theorem to 
\begin{equation}\label{Umn-Def} 
U_{m,n} := \left|\bigcup_{x \in \{S_m, \dots, S_n\}} C_{x}\right|. 
\end{equation} 
Informally speaking, we will show that for each $l$, 
$U_{0, l}$ and $U_{kl, (k+1)l}$ are asymptotically independent as $k$ tends to $\infty$. 

If $G = \Z^d, d \ge 3$, and $p = 0$, 
then, this assertion was shown by Dvoretzky-Erd\"os \cite[Theorem 4]{DE}. 
Spitzer \cite[Section 4]{S64} stated that the strong law of the volume of a discrete analog for the Wiener sausage can be shown, in the same manner as in the continuous case. 
We will deal with this process in the Appendix. 
However, more delicate arguments would be required for $\{U_n\}_n$, because it is highly self-interactive. 
Because of the high self-intersecting nature, it is  interesting to establish a central limit theorem for $\{U_n\}_n$ on $\Z^d, d \ge 3$. 

We now consider properties for $(c_p)_{p  \in [0, p_{T}(G))}$, 
which is somewhat similar to those for $E^{\Pp}\left[|C_o|\right]$ as function of $p$. 

\begin{Thm}[Properties of $c_p$]\label{cp-property}
Assume that $p < p_T (G)$. 
(i) If $G$ is vertex-transitive and transient, then, $c_p$ is analytic on $p \in [0, p_{T}(G))$.\\
(ii) Let $G$ be a Cayley graph of a finitely generated infinite group and assume it is transient.   
Then, 
\begin{equation}\label{cp-modc}
\frac{d}{dp} c_p > 0, \ \ 0 < p < p_T(G).    
\end{equation} 
(iii) If $G = \Z^d, d \ge 11$, then 
\[  \lim_{p \to p_{T}(\Z^d)} c_p = +\infty. \]
\end{Thm}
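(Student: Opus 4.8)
\emph{Overview.} For \emph{(i)} I expand $c_p$ over cluster shapes and dominate the resulting series on a complex neighbourhood; for \emph{(ii)} I differentiate in the monotone coupling and retain only boundary-edge contributions; for \emph{(iii)} I use a mass-transport identity that rewrites $c_p$ as an expected capacity.

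\emph{(i) Analyticity.} Since $p<p_T(G)$ forces $|C_o|<\infty$ almost surely, I would write
\[ c_p=\sum_{A}|A|\,P^o(T_A=\infty)\,\Pp(C_o=A), \]
the sum over finite connected $A\ni o$, where each coefficient lies in $[0,|A|]$ and $\Pp(C_o=A)=(1-p)^{|\partial_E A|}g_A(p)$ is a polynomial ($g_A$ being the probability that the interior edges of $A$ connect it). Each partial sum is then a polynomial, so it suffices to exhibit a complex neighbourhood of each $p_0\in[0,p_T(G))$ on which the series converges uniformly. Writing $t=|p|+|1-p|\ge1$ and $\hat p=|p|/t$ and using $|E(A)|+|\partial_E A|\le\tfrac32\Delta_G|A|$, the elementary estimate
\[ |\Pp(C_o=A)|\le t^{|E(A)|+|\partial_E A|}\,\mathbb{P}_{\hat p}(C_o=A)\le \mu^{|A|}\,\mathbb{P}_{\hat p}(C_o=A),\qquad \mu:=t^{3\Delta_G/2}, \]
reduces uniform summability to finiteness of $E^{\mathbb{P}_{\hat p}}[|C_o|\,\mu^{|C_o|}]$. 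As $p\to p_0$ one has $\hat p\to p_0$ and $\mu\to1$, so on a small enough neighbourhood $\hat p<p_1<p_T(G)$ and $\mu<e^{\gamma(p_1)}$, where $\gamma(p_1)>0$ is the exponential-decay rate of $\Pp(|C_o|\ge k)$ in the subcritical regime. The one nontrivial input is precisely this exponential decay of the cluster-size distribution for $p<p_T(G)$, which I would cite; granting it, Weierstrass's theorem gives analyticity.

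\emph{(ii) Strict positivity of the derivative.} In the standard monotone coupling the set $\bigcup_{x\in S[0,n]}C_x$ only grows with $p$, so $U_n$ is pathwise nondecreasing; hence $c_p$ is nondecreasing and, by part (i), analytic. For strictness I apply Russo's formula to the increasing variable $U_n$,
\[ \frac{d}{dp}\,\widetilde E^p[U_n]=\sum_e\widetilde E^p[\partial_e U_n], \]
and retain a single pivotal scenario per boundary edge: if $e=\{x,v\}$ with $x\in S[0,n]$, $v\notin S[0,n]$ and every other edge at $v$ is closed, then opening $e$ adjoins the single new vertex $v$, so $\partial_e U_n\ge1$ there. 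As percolation and the walk are independent and degrees are at most $\Delta_G$,
\[ \frac{d}{dp}\,\widetilde E^p[U_n]\ge(1-p)^{\Delta_G-1}\,E\big[|\partial_E S[0,n]|\big]. \]
Dividing by $n$, letting $n\to\infty$, and interchanging $d/dp$ with $\lim_n$ (justified by the uniform bounds of part (i) via Vitali's theorem), I obtain $\frac{d}{dp}c_p\ge(1-p)^{\Delta_G-1}\liminf_n E[|\partial_E S[0,n]|]/n$. The main obstacle is to show this $\liminf$ is positive; on a transient Cayley graph I would derive it from transience, since a positive escape probability forces a positive density of visited vertices to retain a permanently unvisited neighbour, giving linear growth of the expected boundary of the trace (this is where the boundary estimates of the paper enter).

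\emph{(iii) Divergence as $p\uparrow p_T(\Z^d)$.} The key step is to eliminate the escape factor by the mass-transport principle on $\Z^d$ applied to $m(x,y)=\mathbf 1[y\in C_x]\,P^y(T_{C_x}=\infty)$, which is invariant under the diagonal shift action; this yields
\[ c_p=E^{\Pp}\Big[\sum_{y\in C_o}P^o(T_{C_o}=\infty)\Big]=E^{\Pp}\Big[\sum_{y\in C_o}P^y(T_{C_o}=\infty)\Big]=E^{\Pp}\big[\ca(C_o)\big], \]
with the capacity normalised as $\ca(A)=\sum_{y\in A}P^y(T_A=\infty)$. The discrete capacity--volume inequality $\ca(A)\ge c_d|A|^{(d-2)/d}$ then gives $c_p\ge c_d\,E^{\Pp}\big[|C_o|^{(d-2)/d}\big]$. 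For $d\ge11$ the triangle condition holds and the critical tail obeys $\mathbb{P}_{p_T(\Z^d)}(|C_o|\ge k)\ge c\,k^{-1/2}$; since $(d-2)/d\ge\tfrac12$, the moment $E^{\mathbb{P}_{p_T(\Z^d)}}[|C_o|^{(d-2)/d}]$ diverges, and as $|C_o|$ is nondecreasing in $p$ in the coupling, monotone convergence yields $c_p\ge c_d E^{\Pp}[|C_o|^{(d-2)/d}]\to+\infty$ as $p\uparrow p_T(\Z^d)$. The only delicate ingredient is the mean-field lower tail of the critical cluster size, which for $d\ge11$ is supplied by the lace expansion.
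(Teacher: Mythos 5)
Your parts (i) and (iii) follow essentially the same route as the paper. For (i), both proofs expand $c_p$ over cluster shapes $A \ni o$ with coefficients in $[0,|A|]$, complexify the monomials $p^m(1-p)^b$, and dominate the resulting series using the exponential decay of the subcritical cluster-size distribution (Antunovi\'c--Veseli\'c); your bookkeeping with $t=|p|+|1-p|$ is a repackaging of the paper's factor $\left(\frac{p+\delta}{p}\cdot\frac{1-p+\delta}{1-p}\right)^{\Delta_G n}$. For (iii), your ``mass-transport'' identity $c_p=E^{\Pp}[\ca(C_o)]$ is exactly the paper's \eqref{cp-cap} (proved there by summing over group translations, which is the same computation), and the rest --- Lawler's bound $\ca(A)\ge c_d|A|^{1-2/d}$, the Fitzner--van der Hofstad critical tail $\simeq n^{-1/2}$ for $d\ge 11$, and monotone convergence --- coincides with the paper. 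Part (ii) is where you genuinely diverge: the paper sets $p_2=p_1+p_3-p_1p_3$, views level-$p_2$ percolation as an independent union of levels $p_1$ and $p_3$, and uses monotonicity of capacity together with \eqref{cp-cap} to get $c_{p_2}-c_{p_1}\ge(1-p_1)^{\Delta_G}(c_{p_3}-c_0)$, reducing strict monotonicity at every $p$ to a lower bound on the slope at $p=0$, which in turn comes from the boundary of the trace via \eqref{boundary} and \eqref{joint}. You instead bound the derivative directly at every $p$ by a pivotal-edge (Russo) estimate, $\frac{d}{dp}\widetilde E^p[U_n]\ge(1-p)^{\Delta_G-1}E\bigl[|\partial_E S[0,n]|\bigr]$. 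Both arguments rest on the same key input --- linear growth of the expected boundary of the trace on a transient graph --- and end with essentially the same quantitative bound; but your route, if completed, does not need the capacity identity at all, hence would apply to vertex-transitive (not necessarily Cayley) transient graphs, while the paper's route avoids any differentiation formula for $U_n$.

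Two gaps in your (ii) need attention. First, Russo's formula is not off-the-shelf here: $U_n$ is unbounded and depends on the states of \emph{all} edges of $G$, so both the differentiability of $p\mapsto\widetilde E^p[U_n]$ and the pivotal-sum representation require justification (finite-volume truncation, or term-by-term differentiation of the subcritical cluster expansion); likewise the Vitali interchange needs uniform bounds on a complex neighbourhood, which you assert but do not derive. Both issues can be bypassed: in the standard monotone coupling of levels $p$ and $p+\epsilon$, the disjoint scenarios ``a boundary edge $\{x,v\}$ of the trace flips from closed to open while every other edge at $v$ is closed at level $p+\epsilon$'' each add at least the vertex $v$, giving $\widetilde E^{p+\epsilon}[U_n]-\widetilde E^p[U_n]\ge\epsilon(1-p-\epsilon)^{\Delta_G-1}E\bigl[|\partial_E S[0,n]|\bigr]$; dividing by $n$, letting $n\to\infty$ on both sides, and only then dividing by $\epsilon$ and using the analyticity of $c_p$ from (i), you obtain $\frac{d}{dp}c_p\ge(1-p)^{\Delta_G-1}\liminf_n E\bigl[|\partial_E S[0,n]|\bigr]/n$ with no Russo formula and no Vitali argument. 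Second, the positivity of this $\liminf$ is not a one-line consequence of transience as your sketch suggests; it is precisely the content of the paper's \eqref{joint}, namely that $\lim_n E[L_n]/n$ is bounded below by the joint escape probability $P^{o,o}\bigl(\{T_o=+\infty\}\cap\bigcup_{y\in\mathcal{N}(o)}\{T_y=T_y^{\prime}=+\infty\}\bigr)>0$ involving two independent walks; since $|\partial_E S[0,n]|\ge L_n$, you should invoke (or reprove) exactly that estimate rather than treat it as immediate.
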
 

We say that a volume growth condition $V(d)$ holds 
if there is a positive constant $C$ such that 
\[ |B(x,n)| \ge C n^{d}, \ x \in G, \ n \ge 1. \] 

\begin{Thm}[Finite modification]\label{fm} 
Assume that $p < p_T (G)$. 
Assume that $G$ is a vertex-transitive graph and $G^{\prime}$ is a finite modification of $G$.  
Then, \\
(i) For any vertex $o$ of $G^{\prime}$, 
\begin{equation}\label{fm-as} 
\lim_{n \to \infty} \frac{U_n}{n} = c_{G,p}, \textup{ $\widetilde P_{G^{\prime}}^{o,p}$-a.s.}  
\end{equation} 
If $G$ satisfies  $V(d)$ for some $d > 2$, 
\begin{equation}\label{Lv1-conv} 
\lim_{n \to \infty} \frac{\widetilde E_{G^{\prime}}^{o, p}[U_n]}{n} = c_{G, p},   
\end{equation} 
and,  
\begin{equation}\label{Lv1-conv-L1} 
\lim_{n \to \infty} \frac{U_n}{n} = c_{G, p}, \textup{ in  $L^1 (\widetilde P_{G^{\prime}}^{o,p})$. } 
\end{equation} 
(ii) If $G$ satisfies  $V(d)$ for some $d > 4$, 
then, for any vertex $o$ of $G^{\prime}$, 
\[ \lim_{n \to \infty} \widetilde E_{G^{\prime}}^{o,p}[U_n] - c_{G,p}n \] 
exists. 
The limit does not take $\pm \infty$.      
\end{Thm}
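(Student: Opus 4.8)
The plan is to express the expectation as a telescoping sum of one-step increments and reduce the statement to the absolute summability of the deviations of these increments from $c_{G,p}$. For $k \ge 1$ put $a_k := \widetilde E_{G^{\prime}}^{o,p}[U_k - U_{k-1}]$ and $a_0 := \widetilde E_{G^{\prime}}^{o,p}[U_0] = E^{\Pp}[|C_o|]$, the latter being finite because $p < p_T(G)$. A vertex $y$ is counted in $U_k$ but not in $U_{k-1}$ exactly when $S_k \in C_y$ while $C_y \cap S[0,k-1] = \emptyset$, so that, with $S[0,-1] := \emptyset$,
\[ U_k - U_{k-1} = |C_{S_k}|\,\mathbf{1}\!\left[C_{S_k} \cap S[0,k-1] = \emptyset\right]. \]
Consequently
\[ \widetilde E_{G^{\prime}}^{o,p}[U_n] - c_{G,p}\,n = a_0 + \sum_{k=1}^{n}\bigl(a_k - c_{G,p}\bigr), \]
and it suffices to prove $\sum_{k \ge 1}|a_k - c_{G,p}| < \infty$; absolute convergence of the series yields at once both the existence of the limit and the fact that it is finite.

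Next I would rewrite $a_k$ by time reversal. The walk on $G^{\prime}$ is reversible with respect to the degree measure $\pi_{G^{\prime}}$, which equals the common degree of $G$ off the finite set $D^{\prime}$. Conditioning on the percolation configuration and reversing the path of length $k$ --- the manoeuvre used in the proof of Theorem \ref{DisLLN}, where on the vertex-transitive graph it collapses to $\widetilde E^{o,p}[U_k-U_{k-1}] = E^{\Pp}[|C_o|\,P^o(T_{C_o}>k)]$ --- gives here
\[ a_k = E^{\Pp}\!\left[\sum_{y}|C_y|\,\frac{\pi_{G^{\prime}}(y)}{\pi_{G^{\prime}}(o)}\,P^{y}_{G^{\prime}}\bigl(S_k = o,\ T_{C_y} > k\bigr)\right]. \]
The heuristic I would make rigorous is that, over a path of length $k$, the avoidance event $\{T_{C_y}>k\}$ is governed by the early excursions near $y$ while the constraint $\{S_k=o\}$ concerns the terminal part near $o$; decoupling these and using detailed balance $\pi_{G^{\prime}}(y)\,p^{G^{\prime}}_k(y,o)=\pi_{G^{\prime}}(o)\,p^{G^{\prime}}_k(o,y)$ turns $a_k$ into $\sum_{y}p^{G^{\prime}}_k(o,y)\,h(y)$, up to a controllable error, where $h(y):=E^{\Pp}[|C_y|\,P^{y}_{G^{\prime}}(T_{C_y}=\infty)]$. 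On the transitive graph $G$ transitivity forces $h\equiv c_{G,p}$; on $G^{\prime}$ one has $h(y)\to c_{G,p}$ as the distance from $y$ to the finite modification region $K:=D^{\prime}\cup\partial D^{\prime}$ tends to infinity.

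The two resulting error terms are then estimated by heat-kernel bounds. Because $G^{\prime}$ is roughly isometric to $G$ (Definition \ref{def-fm}) and has bounded degrees, it inherits $V(d)$ and the on-diagonal, and where needed Gaussian, upper bounds $p^{G^{\prime}}_k(x,y)\le C k^{-d/2}$. The decoupling and late-return error is controlled, after a union bound over the cluster, by $P^{y}_{G^{\prime}}(k<T_{C_y}<\infty)\le C|C_y|\,k^{1-d/2}$, which upon integrating against $|C_y|$ and the second moment $E^{\Pp}[|C_y|^2]$ --- finite and uniformly bounded in $y$, since for $p<p_T(G)$ the subcritical cluster size has exponential tails on $G$ and this is preserved under the finite modification --- is of order $k^{1-d/2}$. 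For the finite-modification term $\sum_y p^{G^{\prime}}_k(o,y)\,(h(y)-c_{G,p})$ I would bound $|h(y)-c_{G,p}|$ by the probability that a walk from $y$ ever reaches $K$, which decays like $d(y,K)^{2-d}$ by a Green-function and coupling comparison of $G$ and $G^{\prime}$ outside $K$; combining this polynomial spatial decay with the Gaussian weight $p^{G^{\prime}}_k(o,y)$ again yields a bound of order $k^{1-d/2}$. Summing, $\sum_k|a_k-c_{G,p}|\le C\sum_k k^{1-d/2}<\infty$ precisely because $d>4$, which is exactly the hypothesis and mirrors the classical threshold ($d\ge 5$) at which the analogous centered quantity for the range $\{R_n\}_n$ converges.

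The main obstacle is the decoupling step together with the uniform control of the finite-modification corrections. Separating the avoidance event $\{T_{C_y}>k\}$ from the endpoint constraint $\{S_k=o\}$ must be carried out with an error that survives multiplication by $|C_y|$ and summation over $y$ and $k$; this is where the high self-intersection of $\{U_n\}_n$ (whole clusters rather than single sites) genuinely complicates the classical range arguments, since one cannot merely track a single point but must carry the random, correlated geometry of $C_y$ through the heat-kernel and last-exit decompositions. I expect the cleanest implementation is a last-exit decomposition of the reversed walk relative to $K$, reducing every term to the transitive estimate plus finitely many harmonic correction terms whose contributions are summable under $V(d)$ with $d>4$.
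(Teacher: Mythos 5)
Your proposal addresses only part (ii) of the theorem; this is the decisive gap. The statement also asserts part (i): the almost-sure convergence \eqref{fm-as} (which requires no volume-growth hypothesis), the convergence of means \eqref{Lv1-conv} under $V(d)$ with $d>2$, and the $L^1$-convergence \eqref{Lv1-conv-L1}. None of these follow from your telescoping argument: your bound on $|a_k - c_{G,p}|$ would, if proved, give \eqref{Lv1-conv} for $d>2$ by Ces\`aro summation, but it controls only the expectation $\widetilde E_{G'}^{o,p}[U_n]$ and says nothing about the random variable $U_n$ itself. The paper's proof of \eqref{fm-as} is different in kind: it uses the strong Markov property and transience to show that, after the walk exits a large ball $B_{G'}(o,3m)$, with probability tending to $1$ (as $m \to \infty$) it never returns to the modified region $D'$ and no percolation cluster attached to the trace reaches $D'$ either; on that event the process is identified with the process on $G$, where Theorem \ref{DisLLN} applies, and the exceptional events exhaust the space as $m \to \infty$. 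The $L^1$-convergence is then deduced from \eqref{fm-as} and \eqref{Lv1-conv} via the Br\'ezis--Lieb theorem, and \eqref{Lv1-conv} itself is proved for all $d>2$ by a separate uniform-integrability argument (Lemma \ref{uc}). You would need to supply all of this.

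For part (ii) itself your plan is structurally the paper's own: both reduce the claim to $\sum_{k}\left|\widetilde E_{G'}^{o,p}[U_k - U_{k-1}] - c_{G,p}\right| < \infty$ with rate $O(k^{1-d/2})$, summable exactly when $d>4$, using Gaussian heat-kernel upper bounds on $G'$ (via rough isometry and $V(d)$), exponential decay of subcritical cluster sizes, and comparison of $G$ and $G'$ away from the modification. The implementations differ in one substantive way. The paper computes the increment through the last-exit decomposition \eqref{last-exit}, obtaining $\widetilde E_{G'}^{o,p}[U_n - U_{n-1}] = \widetilde E_{G'}^{o,p}[|C_{S_n}|] - \sum_{y}\sum_{k \le n-1} P^o_{G'}(S_k = y)\, E^{\Pp}_{G'}\left[|C_y|\, P^y(T_{C_y} = n-k)\right]$, and compares the two terms separately with $E^{\Pp}_{G}[|C_x|]$ and $E^{\Pp}_{G}\left[|C_x|\, P^x(T_{C_x} < \infty)\right]$; no decoupling of an avoidance event from an endpoint constraint is ever needed, because the event $\{T_{C_y} = n-k\}$ already carries the time variable. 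Your route through first-entrance decomposition and time reversal is algebraically correct, but it leaves precisely that decoupling of $\{T_{C_y}>k\}$ from $\{S_k = o\}$ as an acknowledged heuristic, and this is the hardest estimate in your outline, not a routine one. Finally, two ingredients you assert without proof are nontrivial and are established in the paper as Lemma \ref{fm-cri-pb}: that the uniform exponential tail of $|C_y|$ transfers from $G$ to the modified graph $G'$, and that $p_T(G') = p_T(G)$, which is what makes the hypothesis $p < p_T(G)$ usable on $G'$ at all.
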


By \cite[Lemma 3.12]{W}, 
any vertex-transitive transient graph $G$ satisfies $V(2)$, 
so, we believe that the assumption of assertion (i) is not a large restriction. 
Assertions (i) and (ii) are applicable to Cayley graphs of finitely generated group having polynomial volume growth with degree $d \ge 3$, and $d \ge 5$, respectively. 

We now leave the case that $G$ is vertex-transitive.  
If a transient graph $G$ is {\it not} vertex-transitive, then \eqref{LLN} can {\it fail} in the following sense.  

\begin{Thm}[{$E[U_n]/n$} can fluctuate on a transient graph which is not vertex-transitive]\label{fluc} 
Assume that $p < p_T (G)$. 
There is a graph $G$ and a vertex $o$ of $G$ such that for any $p \in [0, p_T (G))$, 
the following holds $\widetilde P^{o,p}$-a.s.  
\begin{equation}\label{fluc-fluc}
\liminf_{n \to \infty} \frac{\widetilde E^{o,p} [U_{n}]}{n} = \liminf_{n \to \infty} \frac{U_{n}}{n} < \limsup_{n \to \infty} \frac{U_{n}}{n} = \limsup_{n \to \infty} \frac{\widetilde E^{o,p} [U_{n}]}{n}.  
\end{equation}  
\end{Thm}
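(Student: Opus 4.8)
The plan is to construct an explicit bounded-degree transient graph on which the simple random walk alternates, over successively longer time windows, between a regime of high volume-growth rate and a regime of low rate, and then to transfer this oscillation from $U_n$ to its expectation by a concentration estimate. Concretely, I would take $G$ to be a rooted spherically symmetric tree whose levels are partitioned into consecutive blocks $B_1, L_1, B_2, L_2, \dots$: on the ``branching'' blocks $B_k$ every vertex has two children, while on the ``linear'' blocks $L_k$ every vertex has one child. All degrees are at most $3$, so $G$ has bounded degree and is not vertex-transitive. Writing $W_n$ for the number of vertices at level $n$, the effective resistance from the root to infinity is comparable to $\sum_n 1/W_n$; since each branching block doubles the width, I can choose the block lengths so that the widths reached before each linear block are large enough to make $\sum_n 1/W_n < \infty$ (so $G$ is transient), while still letting the block lengths grow super-exponentially.

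Next I would set up a block decomposition of the trajectory. Because the level sets are cut sets and $G$ is transient, almost surely the distance process $n \mapsto d(o,S_n)$ crosses each level only finitely often; letting $\tau_k$ be the last time the walk sits below the starting level of $B_k$, the increments $\Theta_k := \tau_{k+1}-\tau_k$ and $\Delta_k := U_{\tau_{k+1}}-U_{\tau_k}$ become, by the strong Markov property and spherical symmetry, distributionally an independent sequence (the post-$\tau_k$ walk is a fresh walk on an isomorphic subtree). Inside a branching block the distance process has outward drift, so the block is traversed ballistically: the traversal time and the added volume are both of order the block length, giving a local rate bounded below by $c_0 = P^{o}(T_o=\infty)>0$, uniformly in $p$. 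Inside a linear block the distance process behaves like the walk on $\Z$ and is diffusive, so a block of $\ell$ levels is traversed in time of order $\ell^2$ while the added volume is only of order $\ell$ (the per-vertex volume being inflated by a bounded, $p$-dependent factor, since $p<p_T(G)$ keeps expected cluster sizes finite); hence its local rate is of order $1/\ell \to 0$.

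With the block lengths growing super-exponentially, the last completed block dominates the accumulated time, so $U_{\tau_{k+1}}/\tau_{k+1}$ is asymptotic to the local rate of block $k$: along branching blocks it stays bounded below by a positive constant, and along linear blocks it tends to $0$. Interpolating over intermediate times using the monotonicity of $n \mapsto U_n$, this yields $\liminf_n U_n/n = 0 < \limsup_n U_n/n$ almost surely, and the strict inequality persists for every $p \in [0,p_T(G))$ simultaneously, since the gap between the two regimes is governed by the ($p$-independent) geometry. It then remains to replace $U_n$ by $\widetilde E^{o,p}[U_n]$ at both ends of the chain; for this I would establish the concentration statement $U_n/n - \widetilde E^{o,p}[U_n]/n \to 0$ almost surely, which makes the almost-sure and in-mean liminf's (resp. limsup's) coincide and gives the full displayed chain \eqref{fluc-fluc}.

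The main obstacle is this last concentration step. The difficulty is that $G$ is not vertex-transitive, so the subadditive ergodic theorem underlying Theorem \ref{DisLLN} is unavailable, and I instead have to control $\va_{o,p}(U_n)$ directly. The plan is to exploit the block decomposition: writing $U_n$ as a sum of the increments $\Delta_k$ over completed blocks plus a current-block term, the near-independence of the $\Delta_k$ together with the finiteness of cluster moments for $p<p_T(G)$ should give $\va_{o,p}(U_n)=o(n^2)$ with a bound summable along a geometric subsequence; combined with the monotonicity of $U_n$ and a Borel--Cantelli argument this upgrades to $U_n - \widetilde E^{o,p}[U_n]=o(n)$ almost surely. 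Making the decoupling of the $\Delta_k$ quantitative — in particular controlling the random traversal times $\Theta_k$ and the overlap of percolation clusters straddling the cut sets with enough precision — is the technical heart of the argument.
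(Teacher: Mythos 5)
Your construction is genuinely different from the paper's, and the strict inequality $\liminf_n U_n/n < \limsup_n U_n/n$ can indeed be obtained along the lines you sketch. (The paper instead glues $\Z^3$ and $\widetilde\Z^3$ together on concentric annuli with rapidly growing radii; by Lemma \ref{fluc-lem} the two graphs have distinct constants $c_{\Z^3,p} < c_{\widetilde\Z^3,p}$, and Theorem \ref{fm} gives convergence of both $U_n/n$ and $\widetilde E^{o,p}[U_n]/n$ to the local constant on each finite modification, so both quantities oscillate between two \emph{positive} limits.) Before that, three points in your set-up need repair: the last-exit times $\tau_k$ are not stopping times, so the strong Markov property cannot be invoked as stated (use first hitting times of the cut levels instead); what you actually need is that the \emph{traversal time} of each branching block dominates the accumulated time, i.e. $b_k \gg \ell_{k-1}^2$, since a linear block of length $\ell$ costs time of order $\ell^2$ while contributing only order $\ell$ to the volume; and you must verify $p_T(G)=1/2$ (which does follow once $b_k \gg \ell_{k-1}$), since your uniform cluster-size bound comes from domination by the binary tree and is false for $p\ge 1/2$, so without this the claim ``for all $p\in[0,p_T(G))$'' would fail.

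The genuine gap is the final step, which you yourself identify as the heart of the argument: the concentration statement $U_n/n - \widetilde E^{o,p}[U_n]/n \to 0$ a.s.\ is \emph{false} for your graph, and no refinement of the block decomposition can rescue it. Consider times $n \asymp \ell_{k-1}^2$, the order of the crossing time of the linear block $L_{k-1}$. That crossing time is diffusive, hence has fluctuations of the same order as its mean: with probability bounded away from zero the walk has already finished the crossing and has run ballistically inside $B_k$ for a time $\asymp n$, giving $U_n \ge c\,n$; and with probability bounded away from zero it is still diffusing inside $L_{k-1}$, giving $U_n = o(n)$. Thus $|U_n - \widetilde E^{o,p}[U_n]| \ge c' n$ with probability bounded below at each such time; since each crossing is a fresh experiment given the past, a conditional Borel--Cantelli argument shows this happens for infinitely many $n$ almost surely, so in fact $\va_{o,p}(U_n)\asymp n^2$ along this subsequence and $U_n/n - \widetilde E^{o,p}[U_n]/n \not\to 0$ a.s. To get the equalities in \eqref{fluc-fluc} you would instead have to argue that at these transition times both $U_n/n$ and $\widetilde E^{o,p}[U_n]/n$ remain strictly between the two extreme values, and that the liminf and limsup are attained only on time scales (ends of ballistic blocks, ends of diffusive blocks) where a \emph{conditional} concentration does hold; this block-by-block comparison of the process with its mean is precisely what is missing. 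The paper avoids the problem altogether because on its hybrid graph the walk's macroscopic speed never degenerates: Theorem \ref{fm} makes the process and its expectation converge to the \emph{same} constant on every intermediate graph $G_k$, so the oscillation transfers to both sides of \eqref{fluc-fluc} without any global variance estimate.
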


In this case, we {\it cannot} define the value $c_p$ in \eqref{cp}.      
If $p = 0$, then, this assertion extends the author's paper \cite[Theorem 1.3]{O14}. 
We use the convergence result of Theorem \ref{fm} in order to show this fluctuation result. 
It is more interesting to find a necessary and sufficient condition for \eqref{fluc-fluc}. 


\subsection{Results for recurrent graphs}

\begin{Thm}\label{lim-rec} 
Assume that $p < p_T (G)$. 
If $G$ is recurrent and vertex-transitive, then,  
\begin{equation}\label{lim-rec-0} 
\lim_{n \to \infty} \frac{\widetilde E^{p}[U_{n}]}{E[R_n]} = 1. 
\end{equation}  
\end{Thm}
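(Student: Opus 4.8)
The plan is to evaluate both expectations as sums of hitting probabilities and then to show that their difference is of smaller order than $E[R_n]$. A vertex $y$ is counted in $U_n$ exactly when its cluster meets the trace, i.e. when $H_{C_y}\le n$, so independence of the walk and the percolation gives
\[
\widetilde{E}^{p}[U_n]=\sum_{y}E^{\Pp}[P^{o}(H_{C_y}\le n)],
\qquad
E[R_n]=\sum_{y}P^{o}(H_y\le n).
\]
Because $y\in C_y$ we always have $H_{C_y}\le H_y$, whence $U_n\ge R_n$ and the lower bound $\widetilde{E}^{p}[U_n]\ge E[R_n]$ is immediate. The entire content of the theorem is therefore the matching upper bound $\widetilde{E}^{p}[U_n]\le(1+o(1))E[R_n]$.

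Subtracting, the difference equals $\sum_{y}E^{\Pp}[P^{o}(H_{C_y}\le n,\ H_y>n)]$. On this event some $x\in C_y$ with $x\neq y$ is hit by time $n$ while $y$ is not, so a union bound together with the strong Markov property applied at $H_x$ yields
\[
P^{o}(H_{C_y}\le n,\ H_y>n)\le\sum_{x\in C_y\setminus\{y\}}\sum_{m=0}^{n}P^{o}(H_x=m)\,P^{x}(H_y>n-m).
\]
Taking $E^{\Pp}$ and using $E^{\Pp}[\mathbf{1}(x\in C_y)]=\Pp(x\in C_y)$, I would exploit vertex-transitivity to see that $g_t:=\sum_{w\neq o}\Pp(w\in C_o)\,P^{o}(H_w>t)$ is independent of the base point. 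Reorganising the triple sum then collapses it into a convolution
\[
\widetilde{E}^{p}[U_n]-E[R_n]\ \le\ \sum_{m=0}^{n}r_m\,g_{n-m},
\qquad
r_m:=\sum_{x}P^{o}(H_x=m)=P^{o}(S_m\notin S[0,m-1]).
\]

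Two facts finish the argument. First, since $p<p_T(G)$ we have $\sum_{w}\Pp(w\in C_o)=E^{\Pp}[|C_o|]<\infty$, while recurrence forces $P^{o}(H_w>t)\to0$ for each $w$; dominated convergence then gives $g_t\to0$. Second, a time-reversal of the path (valid since $G$ is vertex-transitive) shows $r_m=P^{o}(T_o>m)$, which is nonincreasing in $m$, and since an infinite recurrent vertex-transitive graph is null recurrent we have $\sum_{m=0}^{n}r_m=E[R_n]\to\infty$. Splitting the convolution at $m=n-K$ bounds it by $(\sup_{t\ge K}g_t)\,E[R_n]+E^{\Pp}[|C_o|]\,(E[R_n]-E[R_{n-K}])$; dividing by $E[R_n]$, sending $n\to\infty$ (so that the second term vanishes, its numerator being bounded by $K$) and then $K\to\infty$ gives $\sum_{m}r_m\,g_{n-m}=o(E[R_n])$, which is exactly the required bound.

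The delicate point is this last estimate. The crude inequality $P^{o}(H_x\le n,\ H_y>n)\le P^{o}(H_x\le n)$ would only produce $\widetilde{E}^{p}[U_n]\le E^{\Pp}[|C_o|]\,E[R_n]$, a constant-factor loss; the theorem requires genuinely using that on a recurrent graph, once the walk has reached the (typically small) cluster of $y$, it also hits $y$ before time $n$ with probability tending to $1$. Encoding this through the null sequence $g_t$ and the monotonicity $r_m=P^{o}(T_o>m)$ is precisely what upgrades the constant factor to $1+o(1)$, and it is where I expect the main work to lie.
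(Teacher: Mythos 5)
Your proof is correct, and it takes a genuinely different route from the paper's. The paper never expands $\widetilde E^{p}[U_n]$ as a sum of cluster-hitting probabilities; instead it uses the pathwise sandwich $R_n \le U_n \le R_n + \sum_{x \in \partial R_n}|C_x|$ in \eqref{bd-always}, where $\partial R_n$ is the inner boundary of the trace and $L_n=|\partial R_n|$, which by independence of walk and percolation gives $\widetilde E^{p}[U_n]-E[R_n]\le \sup_{x}E^{\Pp}[|C_x|]\,E[L_n]$ as in \eqref{boundary}, and then proves Lemma \ref{bd-rec}: $E[L_n]/E[R_n]\to 0$ on recurrent vertex-transitive graphs, via a two-independent-walk representation of $E[L_n]$ and recurrence (a fixed neighbour of the current position is revisited with probability tending to one). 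You replace the boundary by a vertex-by-vertex hitting-time computation: the excess $\sum_y E^{\Pp}\left[P^o(H_{C_y}\le n,\ H_y>n)\right]$ is dominated by the convolution $\sum_m r_m g_{n-m}$, and recurrence enters through $g_t\to 0$ (once the walk has hit the small cluster of $y$, it also hits $y$ itself). The underlying mechanism — recurrence, $E^{\Pp}[|C_o|]<\infty$, transitivity for homogeneity — is the same in spirit, but the decompositions are different. What the paper's route buys is that the boundary estimates \eqref{bd-always}, \eqref{boundary} and Lemma \ref{bd-rec} are pathwise and get reused later (the variance bound and LLN on $\Z^2$ in Theorem \ref{lowdim-dis}, and the lower bound in the proof of Theorem \ref{cp-property}(ii)); what your route buys is a self-contained argument with the explicit quantitative bound $\widetilde E^{p}[U_n]-E[R_n]\le\sum_m r_m g_{n-m}$. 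Two cosmetic points: the identity $r_m=P^o(T_o>m)$ via time reversal is cleanest on Cayley graphs — on a general vertex-transitive graph it follows by comparing your first-visit decomposition $E[R_n]=\sum_{m\le n}r_m$ with the last-exit decomposition $E[R_n]=\sum_{k\le n}P^o(T_o>k)$, which is the one the paper uses — but in fact your final splitting needs only $r_m\le 1$ and $E[R_n]\to\infty$, so neither that identity nor the monotonicity of $r_m$ is load-bearing; likewise null recurrence is not needed for $E[R_n]\to\infty$, which follows from recurrence (every vertex is a.s. visited) and monotone convergence, exactly as at the end of the paper's proof of Lemma \ref{bd-rec}.
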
 

A certain {\it homogeneity} assumption of $G$ would be crucial for \eqref{lim-rec-0},  
because we can give an example of an inhomogeneous graph on which \eqref{lim-rec-0} fails. 
See Remark \ref{Rem-unbdd} (ii). 

\begin{Thm}\label{lowdim-dis} 
If $G = \Z^2$, then, \\
(a) 
\begin{equation}\label{asy-2d}
\left|\widetilde E^p [U_n] - \frac{n}{\log n} \pi \right| = O\left(\frac{n}{(\log n)^2}\right). 
\end{equation}
(b) 
\begin{equation}\label{Var-2d}
\va_p (U_n) = O\left(\frac{n^2}{(\log n)^4}\right).  
\end{equation}
(c) 
For any $1 \le q < +\infty$, 
\begin{equation}\label{lim-2-dis-as} 
\lim_{n \to \infty} \frac{\log n }{n} U_{n} =  \pi,  \ \textup{ $\widetilde P^{p}$-a.s. and in $L^q (\widetilde P^{p})$.}  
\end{equation}  
\end{Thm}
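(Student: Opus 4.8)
Throughout write $A:=S[0,n]$ for the trace and, for a finite set $A\subset\Z^2$, let $\partial A:=\{z\in\Z^2\setminus A:\ z\sim w\ \text{for some}\ w\in A\}$ be its outer boundary. The plan is to reduce all three assertions to estimates on $R_n$ and on the first two moments of $|\partial A|$. The mechanism is that $p<p_{T}(\Z^2)=1/2$ makes the percolation genuinely subcritical, so connectivities decay exponentially and $M_k:=E^{\Pp}[|C_o|^k]<\infty$ for all $k$; consequently the extra mass $U_n-R_n$ created by fattening the trace is captured by its boundary. Concretely, conditionally on $A$ every $y\notin A$ with $y\leftrightarrow A$ is connected to some $z\in\partial A$, so $\mathbf 1[y\leftrightarrow A]\le\sum_{z\in\partial A}\mathbf 1[y\leftrightarrow z]$, giving
\[ E^{\Pp}[U_n-R_n\mid A]=\sum_{y\notin A}\Pp(y\leftrightarrow A)\le\sum_{z\in\partial A}E^{\Pp}[|C_z|]=M_1\,|\partial A|. \]
For part (a) I would therefore write $\widetilde E^{p}[U_n]=E[R_n]+\widetilde E^{p}[U_n-R_n]$ and combine the sharp range asymptotics $E[R_n]=\pi n/\log n+O(n/(\log n)^2)$ (a refinement of Dvoretzky--Erd\H{o}s) with the boundary bound $E[|\partial A|]=O(n/(\log n)^2)$. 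The latter I would obtain from the neighbour-pair inequality
\[ E[|\partial A|]\le\sum_{w}\sum_{z\sim w}P^{o}(w\in A,\ z\notin A)\le 4\sum_{j=0}^{n}u_j\,\frac{C}{\log(n-j+2)}, \]
where $u_j:=P^{o}(S_j\notin S[0,j-1])\simeq 1/\log j$ and the factor $C/\log(n-j+2)$ comes from conditioning on the first visit $H_w=j$ and using that planar walk avoids a fixed neighbouring site for $m$ steps with probability $\simeq 1/\log m$; the convolution of the two $1/\log$ factors is $O(n/(\log n)^2)$.

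For part (b) I would decompose the variance according to the trace,
\[ \va_{p}(U_n)=\va\!\left(E^{\Pp}[U_n\mid A]\right)+E\!\left[\va^{\Pp}(U_n\mid A)\right]. \]
Writing $E^{\Pp}[U_n\mid A]=R_n+h(A)$ with $0\le h(A)\le M_1|\partial A|$, the first term is at most $2\va(R_n)+2M_1^2E[|\partial A|^2]$. For the second, the two-point version of the bound above gives $\va^{\Pp}(U_n\mid A)\le E^{\Pp}[(U_n-R_n)^2\mid A]\le\sum_{z,z'\in\partial A}E^{\Pp}[|C_z|\,|C_{z'}|]\le M_2|\partial A|^2$ by Cauchy--Schwarz, hence it is at most $M_2E[|\partial A|^2]$. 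Thus (b) reduces to the classical variance estimate $\va(R_n)=O(n^2/(\log n)^4)$ together with the second moment of the boundary, $E[|\partial A|^2]=O(n^2/(\log n)^4)$.

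This last estimate is the main obstacle. The natural approach is the two-point analogue of the first-moment computation: expand $E[|\partial A|^2]$ over the ordered first-visit times $H_w=j\le H_{w'}=j'\le n$ of two vertices $w,w'$ carrying unvisited neighbours $z,z'$, apply the strong Markov property at $j$ and at $j'$, and estimate the probability that both $z$ and $z'$ stay unvisited up to time $n$. The delicate point is that the cheap bound, which only requires $z'$ to survive on $[j',n]$ and forgets that $z$ must survive on all of $[j,n]$, yields merely $\sum_{j\le j'}u_j\,u_{j'-j}/\log(n-j'+2)\simeq n^2/(\log n)^3$ and loses one logarithmic factor. To reach the sharp exponent one must retain \emph{both} avoidance constraints simultaneously, i.e.\ control the probability that planar walk avoids two prescribed sites for a given number of steps and show the two $1/\log$ gains genuinely multiply; this reproduces the convolution $\big(\sum_{j}u_j/\log(n-j)\big)^2\simeq (n/(\log n)^2)^2$. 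Establishing this joint non-hitting estimate is where essentially all the work of the theorem lies.

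Part (c) then follows by soft arguments. From (a), $(\log n/n)\widetilde E^{p}[U_n]\to\pi$, and from (b), $\va_{p}\big((\log n/n)U_n\big)=O(1/(\log n)^2)\to0$, so $(\log n/n)U_n\to\pi$ in $L^2$ and in probability. For the almost sure statement I would apply Chebyshev along a sparse subsequence $n_k=\lfloor e^{k^{2/3}}\rfloor$, for which $\sum_k\va_p\big((\log n_k/n_k)U_{n_k}\big)<\infty$ while $n_{k+1}/n_k\to1$; Borel--Cantelli gives convergence along $(n_k)$, and the monotonicity of $n\mapsto U_n$ together with $n_{k+1}/n_k\to1$ fills the gaps to give $(\log n/n)U_n\to\pi$ a.s. Finally, $L^q$ convergence follows from uniform integrability: since $U_n\le\sum_{x\in A}|C_x|$, Minkowski's inequality yields $E^{\Pp}[U_n^{q+1}\mid A]\le R_n^{\,q+1}M_{q+1}$, so $\sup_n\widetilde E^{p}\big[((\log n/n)U_n)^{q+1}\big]<\infty$ by $M_{q+1}<\infty$ and the known moment bound $E[R_n^{\,q+1}]=O((n/\log n)^{q+1})$, which upgrades the almost sure convergence to $L^q$.
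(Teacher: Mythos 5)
Your reduction is essentially the paper's own strategy: it too controls $U_n-R_n$ through the boundary of the trace (the inner boundary $\partial R_n$ with $L_n=|\partial R_n|$ rather than your outer boundary $\partial A$ --- an immaterial difference on $\Z^2$, since the two are comparable up to a factor $4$), proves (a) from the Jain--Pruitt/Dvoretzky--Erd\H{o}s range asymptotics plus a first-moment bound on the boundary, proves (b) from $\va(R_n)=O(n^2/(\log n)^4)$ plus a second-moment bound on the boundary, and gets (c) by Chebyshev--Borel--Cantelli along a sparse subsequence, monotonicity, and uniform integrability. Your first-moment convolution argument in (a) is correct (the paper instead quotes Okada's theorem, \eqref{Okada24}), and your law-of-total-variance reduction in (b) is sound. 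But there is a genuine gap, and it is exactly the one you flag yourself: the estimate $E^{P^0}[|\partial A|^2]=O(n^2/(\log n)^4)$ is asserted, together with a description of what ``one must'' show (that the two $1/\log$ avoidance gains multiply), but no proof is given. Since (b) rests entirely on this estimate, and your almost sure part of (c) rests on (b), the proposal is incomplete from that point onward; what you have actually proved is only part (a).

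The idea you are missing is a localization that sidesteps the joint long-time avoidance problem altogether (the paper's Lemma \ref{lem-okada}(ii), giving \eqref{L-2nd}). Work along $u_m=\exp(m^{2/3})$ and set $a_m=u_m^{1/4}$. Replace the global requirement ``some neighbour of $x$ is unvisited up to time $n$'' by the local event ``some neighbour of $x$ is unvisited during the $a_m$ steps following the first visit $T_x$''; this only increases the count, and $\max_{k\in[u_{m-1},u_m]}L_k\le V_m+a_m$, where $V_m$ is the number of such $x$ in $S[0,u_m]$. The truncation buys two things. First, the indicator attached to a first-visit time $k$ depends only on the increments in the window $[k-a_m-1,\,k+a_m]$, so indicators at times more than $a_m$ apart are \emph{exactly} independent, and the diagonal pairs contribute only $O(a_mu_m)$, which is negligible. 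Second, within one indicator the constraint ``$S_k$ not visited in the previous $a_m$ steps'' and the constraint ``a neighbour of $S_k$ unvisited in the next $a_m$ steps'' involve disjoint blocks of increments, so their probabilities multiply, and by Kesten--Spitzer each is $O(1/\log a_m)=O(1/\log u_m)$ (the truncation costs only a constant inside the logarithm since $\log a_m=\tfrac14\log u_m$). This yields $E^{P^0}[V_m^2]=O\bigl(u_m^2/(\log u_m)^4\bigr)$, hence the bound for all $n$ because $u_m/u_{m-1}\to1$; this is precisely the ``joint non-hitting'' input your sketch calls for. A secondary point: in (c) you cite $E^{P^0}[R_n^{q+1}]=O((n/\log n)^{q+1})$ as known, but the paper has to prove this itself (in the sharp form $\lim_n(\log n/n)^qE^{P^0}[R_n^q]=\pi^q$, its Lemma \ref{2d-range-moment}), the only available reference being the Wiener-sausage analogue in Le Gall. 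Your use of it is fine for uniform integrability, but you should either supply a proof (block subadditivity with blocks of length $n/\log n$ plus an exponential moment bound suffices for the $O$-version) or a precise citation.
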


If $p = 0$, then, \eqref{asy-2d}, \eqref{Var-2d}, and \eqref{lim-2-dis-as} were obtained by Jain-Pruitt \cite[Lemma 3.1]{JP70}, \cite[Theorem 4.2]{JP72}\footnote{They showed $(\log n)^4 \va(R_n) / n^2$ converges to a positive constant as $n \to \infty$ for a general class of random walk.}, and \cite[Theorem 4]{DE}, respectively. 
Le Gall \cite[Lemme 6.2]{Le86CMP} also shows \eqref{Var-2d} for the case of $p = 0$, by using the estimate for intersections of two independent random walks.   
For $n_1 < n_2 \le n_3 < n_4$, $U_{n_1, n_2}$ and $U_{n_3, n_4}$ are not independent, which will be an obstacle to applying the method of the proof of \cite[Lemme 6.2]{Le86CMP}.  
We show (a), (b) and (c) by considering the boundary of the trace of the simple random walk, 
specifically, using the phenomenon that on a recurrent graph, the boundary of the trace is ``sufficiently" smaller than the trace.  
It is also interesting to establish a central limit theorem for $\{U_n\}_n$ on $\Z^2$. 

As the following shows, 
the behaviour of $\{U_n\}_n$ on a certain locally-finite recurrent graph with {\it unbounded} degrees is significantly different from that on vertex transitive graphs with bounded degrees. 

\begin{Thm}[Recurrent graph with unbounded degrees]\label{unbdd} 
There is a graph $G$  with unbounded degrees  and a vertex $o$ of $G$ such that\\
(a) $p_T (G) = 1$, \\
and Furthermore the following hold for any $p \in (0,1)$:\\
(b)
\begin{equation}\label{loglogUn} 
\limsup_{n \to \infty} \frac{\log\log U_{n}}{\log n} \ge \frac{1}{2},  \  \textup{ $\widetilde P^{o, p}$-a.s.}     
\end{equation}  
(c) 
\begin{equation}\label{VarUn} 
\limsup_{n \to \infty} \frac{\log \va_{o,p} (U_n)}{n} \ge \log \frac{1}{2p^2}.     
\end{equation}
\end{Thm}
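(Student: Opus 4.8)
The plan is to construct $G$ explicitly as a tree-like graph whose vertex degrees grow without bound along a fixed ray, so that with high probability the percolation cluster attached to a single late-visited vertex is already enormous. First I would describe the graph: take a base path (or a copy of $\Z$) that the walk can traverse, and at the $k$-th vertex $v_k$ along a distinguished ray attach a ``broom'' consisting of $N_k$ pendant edges (leaves), where $N_k \to \infty$ rapidly, say $N_k = 2^{\lfloor \sqrt{k} \rfloor}$ or a similar doubly-exponential-in-$\log$ schedule chosen to produce the exponent $1/2$ in \eqref{loglogUn}. The point is that once the walk reaches $v_k$, its percolated cluster $C_{v_k}$ already contains, in expectation, about $p N_k$ of the pendant leaves, since each leaf-edge is open independently with probability $p$; thus $U_n$ inherits a contribution of order $N_k$ from the single vertex $v_k$. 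For part (a), I would check that $p_T(G) = 1$: because the graph is essentially a tree with a single infinite path and finite brooms hanging off it, the expected cluster size $E^{\Pp}[|C_o|]$ stays finite for every $p < 1$ (the only way to reach infinitely many vertices is along the infinite path, each of whose edges must be open, giving a geometric tail), which forces $p_T(G) = 1$ and in particular legitimizes taking any $p \in (0,1)$ in the theorem.

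For part (b), the key step is a lower bound on how far along the ray the walk travels by time $n$. Since the graph is recurrent, I expect the walk to reach $v_k$ for $k$ of order $n^{1/2}$ (or whatever the diffusive/sub-diffusive scale dictated by the broom weights is); more precisely I would show that the maximal index $K_n := \max\{k : v_k \in S[0,n]\}$ satisfies $K_n \gtrsim n^{\alpha}$ with positive probability infinitely often, using a first-passage/hitting-time estimate for the walk on this weighted-path structure. Combining $K_n \gtrsim n^{\alpha}$ with the cluster estimate $|C_{v_{K_n}}| \gtrsim p N_{K_n}$ (which holds with probability bounded below, by a simple binomial concentration or even a single-leaf lower bound), I would obtain $U_n \gtrsim N_{K_n} \approx 2^{\sqrt{K_n}} \gtrsim 2^{c n^{\alpha/2}}$ along a subsequence, whence $\log\log U_n \gtrsim (\alpha/2)\log n$; choosing the broom schedule $N_k$ and verifying the correct $\alpha$ delivers the constant $1/2$. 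A Borel--Cantelli argument over the infinitely many return times of the walk to the base of the ray upgrades this from ``positive probability'' to the almost-sure $\limsup$ in \eqref{loglogUn}.

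For part (c), I would lower-bound the variance by exhibiting a single high-variance coordinate. Conditioning on the walk's trajectory and on the event that $v_k$ is visited by time $n$ for a suitable $k = k(n)$, the number of open leaf-edges at $v_k$ is $\mathrm{Binomial}(N_k, p)$, contributing variance of order $N_k \, p(1-p)$ to $U_n$; but a sharper bound comes from the fact that whether the \emph{whole} broom contributes is itself random, so I would instead track the event $A_k$ that at least one specific leaf is open (probability $p$) against its complement and use the spread between the two conditional means. Using $\va_{o,p}(U_n) \ge E[(U_n - E[U_n])^2 \mathbf{1}_{B}]$ for a well-chosen event $B$, and exploiting that the leaf-count at $v_{k}$ can differ by $\sim N_k$ between configurations of comparable probability $\sim p^{2}$, I would arrive at $\va_{o,p}(U_n) \gtrsim (2p^2)^{-k(n)}$-type growth once the schedule $N_k$ is substituted; matching this to the stated bound $\log(1/(2p^2))$ fixes the exponential rate. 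The hard part will be the hitting-time analysis in part (b): controlling how quickly the recurrent walk penetrates the ray when the brooms distort the effective resistances and stationary measure, since the unbounded degrees make the walk spend anomalously long times in the brooms and I must show it nonetheless reaches index $\gtrsim n^{\alpha}$ infinitely often almost surely rather than merely in expectation. I expect to handle this via a careful effective-resistance computation for $R_{\textup{eff}}(v_0, v_k)$ together with commute-time identities, feeding into a second-moment or Borel--Cantelli argument along return times.
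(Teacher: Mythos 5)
Your plan has the same skeleton as the paper's proof---a half-line with pendant ``brooms,'' the binomial count of open leaf-edges at a far vertex driving both the size in (b) and the variance in (c), resistance/commute-time bounds for hitting times, and Borel--Cantelli---but your construction is quantitatively wrong, and the step you defer as ``the hard part'' (the hitting-time analysis) is exactly where it fails. To get the constant $\tfrac12$ you need two things simultaneously: (i) the broom at the frontier vertex $v_K$ must satisfy $\log N_K \ge K^{1-o(1)}$, so that $\log\log U_n \approx \log K$; and (ii) the walk must reach $v_K$ in time $n \approx K^{2+o(1)}$, so that $\log n \approx 2\log K$. The commute-time identity you yourself invoke rules out (ii) for your graph: each visit to $v_j$ costs on average about $N_j$ steps in two-step broom excursions, so $E^{P^o}[T_{v_K}]$ is of order $K\sum_{j\le K}(1+N_j)$, and (ii) forces $\sum_{j\le K}N_j = O(K^{1+o(1)})$, which is flatly incompatible with (i) unless the brooms sit at extremely sparse sites. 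Concretely, with $N_k = 2^{\lfloor \sqrt k\rfloor}$ at every vertex, a first Borel--Cantelli argument applied to the geometric number of broom excursions at the first visit to $v_{k-1}$ gives $T_{v_k}\ge k^{-2}N_{k-1}$ for all large $k$ a.s., hence the frontier at time $n$ is $K_n = O((\log n)^2)$; since open clusters reach only $O(\log n)$ sites beyond the frontier a.s., $U_n = O(n^{2+o(1)})$, and the left-hand side of \eqref{loglogUn} equals $0$ for your graph, not $\tfrac12$. (Even granting your diffusive guess $K_n \approx n^{1/2}$, the schedule $2^{\sqrt k}$ yields only $\tfrac14$.) This is precisely why the paper attaches brooms only at sites $a_n$ with $a_n > \exp\bigl(\bigl(\sum_{i\le n-1}(a_i+b_i)\bigr)^2\bigr)$: then everything before $a_n$ has total size at most $\sqrt{\log a_n}$, so $E^{P^o}[T_{a_n}]\le 2a_n^2$ and Borel--Cantelli gives $T_{a_n}\le a_n^2\log a_n$ eventually a.s., while $b_n$ is taken exponentially large in $a_n$, so that $\log\log U_{T_{a_n}}/\log T_{a_n}\to\tfrac12$.

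A second gap, which you also cannot defer, is that your (a) and your (c) contradict each other. The rate $\log\frac{1}{2p^2}$ in (c) is obtained in the paper as the conditional binomial variance $p(1-p)b_n$ of the leaf count, given the walk, multiplied by the probability $\ge c\,a_n^{-1}2^{-a_n}$ of the ballistic event $\{T_{a_n}=a_n\}$; this equals $(2p^2)^{-a_n}$ up to subexponential factors only because the broom size is tuned to the percolation parameter, $b_n = \lfloor n^{-4}p^{-2a_n}\rfloor$ as in \eqref{b_n_1}, and you would likewise need $N_k \approx p^{-2k}$. But your argument for (a) rests on the brooms being subexponential in their distance, $\log N_k = o(k)$ (that is what makes $E^{\Pp}[|C_o|]<+\infty$ for every $p<1$), and under that restriction the mechanism cannot produce any positive exponential rate: from $U_n \le \sum_{i\le n}|C_{S_i}|$ and the Cauchy--Schwarz inequality, $\va_{o,p}(U_n)\le (n+1)^2\sup_{x\in B(o,n)}E^{\Pp}\bigl[|C_x|^2\bigr] = e^{o(n)}$, so $\limsup_{n}\frac1n\log\va_{o,p}(U_n)\le 0 < \log\frac{1}{2p^2}$ for every $p<2^{-1/2}$. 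So you must either let the graph depend on $p$, as \eqref{b_n_1} does (and then re-examine what that choice does to claim (a)), or find a genuinely different source of exponential variance; the plan as written, with a single $p$-independent subexponential schedule serving all three parts, cannot deliver (a), (b) and (c) together.
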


In (b) and (c) above, the case that $p = 0$ are excluded.   
It is also interesting to find examples of (non vertex transitive) graphs with {\it bounded} degrees such that $\{U_n\}_n$ behaves in a singular manner.  

\subsection{Positive and negative exponentials}

We now consider positive exponentials of $\{U_n\}$ in the case that $G$ is vertex-transitive.
For $\theta > 0$ and $p \in [0, p_T (G))$, 
let \[ \Lambda_p (\theta) = \inf_{n \ge 1} \frac{\log E[\exp(\theta U_{n-1})]}{n}. \]
We allow to take $+\infty$ as the limit. 
We have that $\Lambda_p$ is upper semicontinuous. 

Now similarly to \cite{H},
we consider the behaviour of $\Lambda_p (\theta)$ as $\theta \to 0$. 
For each $p \in [0, p_T (G))$ and  a vertex $o$ of $G$, let 
\[ \theta_c (p) := \inf\left\{\theta > 0 : E^{\Pp}\left[\exp(\theta |C_o|)\right] = +\infty \right\}.  \]  

If $p > 0$, then,  for each $n$, 
\[ \Pp(|C_o| > n) \ge \Pp(\textup{there exists an open self-avoiding path of length $n$ starting at $o$})\]
\[ \ge p^n. \]
Hence, 
$E^{\Pp}\left[\exp(\theta |C_o|)\right] = +\infty$ for sufficiently large $\theta$. 
By \eqref{AV3}, 
it holds that 
\[ \theta_c (p) = +\infty, \ p = 0. \] 
\[ 0 < \theta_c (p) < +\infty,  \ 0 < p < p_T(G). \]
\[ \theta_c (p) = 0,   \ \ p \ge p_T(G).\] 
It holds that $\Lambda_p (\theta) < +\infty$ for $\theta \in [0, \theta_c(p))$.  

If $p = 0$, then, by the Markov property, 
\[ E^{P^0}[\exp(\theta R_{m+n})] \le E^{P^0}[\exp(\theta R_m)]E^{P^0}[\exp(\theta R_n)], \ n,m \ge 1. \] 
(see \cite{H}) and we have that 
\[ \Lambda_0 (\theta) = \lim_{n \to \infty} \frac{\log E^{P^0}[\exp(\theta R_n)]}{n}. \] 
By this and the H\"older inequality, 
$\Lambda_0 (\theta)$ is convex with respect to $\theta$, and hence, continuous on $[0, +\infty)$. 

\begin{Thm}[Positive exponentials]\label{posi} 
Assume that $p < p_T (G)$.  
(i) If $G$ is vertex-transitive and transient, 
then, \[ \lim_{\theta \to 0} \frac{\Lambda_p (\theta)}{\Lambda_0(\theta)} = \frac{c_p}{c_0},  \]
where $c_p$ is the constant in \eqref{cp}. \\
(ii) If $G = \Z^d, d = 1,2$, then,   
\[ \lim_{\theta \to 0} \frac{\Lambda_p (\theta)}{\Lambda_0(\theta)} = 1.  \] 
\end{Thm}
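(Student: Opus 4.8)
The plan is to prove (i) and (ii) by two unrelated mechanisms, because in the transient case $c_0>0$ makes the limit a genuine quotient of derivatives at $\theta=0$, whereas in the recurrent case $c_p=c_0=0$ and the value $1$ must be read off from the finer geometry of $\{U_n\}_n$.

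For (i) I would first reduce the claim to
\[ \lim_{\theta\to0^+}\frac{\Lambda_p(\theta)}{\theta}=c_p,\qquad p\in[0,p_T(G)). \]
Granting this for both $p$ and $0$, transience gives $c_0>0$ and $\Lambda_0(\theta)\ge\theta c_0>0$ for small $\theta$, so dividing the two relations yields $\Lambda_p(\theta)/\Lambda_0(\theta)\to c_p/c_0$. Everything thus rests on a two-sided estimate of $\Lambda_p(\theta)$ near $0$. The lower bound is the crux and comes from a first-entrance decomposition: writing $U_n=\sum_{i=0}^n|C_{S_i}|\,\mathbf 1[\text{$C_{S_i}$ is first met at time }i]$ and using reversibility together with vertex-transitivity, I would obtain
\[ \widetilde E^{p}[U_n]=\sum_{i=0}^{n}E^{\Pp}\bigl[|C_o|\,P^o(T_{C_o}>i)\bigr]. \]
Each summand decreases in $i$ to $E^{\Pp}[|C_o|\,P^o(T_{C_o}=\infty)]=c_p$, hence is $\ge c_p$, giving $\widetilde E^{p}[U_{n-1}]\ge n\,c_p$. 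Jensen's inequality and the defining infimum then force $\Lambda_p(\theta)\ge\theta c_p$. For the matching upper bound I would evaluate the infimum at a fixed level $N$, expand $\log\widetilde E^{p}[e^{\theta U_{N-1}}]=\theta\,\widetilde E^{p}[U_{N-1}]+O(\theta^2)$, and let $N\to\infty$ using $\widetilde E^{p}[U_{N-1}]/N\to c_p$ from Theorem \ref{DisLLN}; this gives $\limsup_{\theta\to0}\Lambda_p(\theta)/\theta\le c_p$ and completes (i).

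For (ii) with $d=1$ the argument is exact. The trace $S[0,n]$ is the interval between its minimum and maximum, so $U_n=R_n+L_n+L_n'$, where $L_n,L_n'$ are the lengths of the open extensions beyond the two endpoints; these are independent of the walk and i.i.d.\ geometric, with $\phi(\theta):=E^{\Pp}[e^{\theta L_n}]<\infty$ for $\theta<\theta_c(p)$. Hence $\widetilde E^{p}[e^{\theta U_{n-1}}]=\phi(\theta)^2\,E[e^{\theta R_{n-1}}]$, so that
\[ \frac1n\log\widetilde E^{p}[e^{\theta U_{n-1}}]=\frac1n\log E[e^{\theta R_{n-1}}]+\frac{2\log\phi(\theta)}{n}. \]
The correction is positive and vanishes as $n\to\infty$, so taking the infimum and using the subadditive limit $\frac1n\log E[e^{\theta R_{n-1}}]\to\Lambda_0(\theta)$ gives $\Lambda_p(\theta)=\Lambda_0(\theta)$ outright, i.e.\ the ratio equals $1$.

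For (ii) with $d=2$ I would sandwich $\Lambda_p$ between $\Lambda_0$ and $(1+o(1))\Lambda_0$. The lower bound $\Lambda_p(\theta)\ge\Lambda_0(\theta)$ is immediate from $U_n\ge R_n$. For the upper bound, write $U_n=R_n+N_{S[0,n]}$ with $N_A:=|\{y\notin A:y\leftrightarrow A\}|$ the set of external vertices attached to the trace, so that $\widetilde E^{p}[e^{\theta U_n}]=E\bigl[e^{\theta R_n}\,E^{\Pp}[e^{\theta N_{S[0,n]}}]\bigr]$. Since $p<p_T(\Z^2)=p_c(\Z^2)$, two-point connectivities decay exponentially, so the attached vertices hug the boundary of the trace; combined with the recurrence phenomenon that the boundary $\partial S[0,n]$ is of smaller order than $R_n$ (the same mechanism underlying Theorem \ref{lowdim-dis}), the inner factor should be subexponential relative to $e^{\theta R_n}$, yielding $\Lambda_p(\theta)\le\Lambda_0\bigl((1+o(1))\theta\bigr)$ and hence ratio $1$ as $\theta\to0$. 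The hard part will be exactly here: one must bound $E^{\Pp}[e^{\theta N_A}]$ by $\exp(C(\theta)|\partial A|)$ with $C(\theta)\to0$ through a cluster-size control, and — more seriously — show that $|\partial S[0,n]|=o(R_n)$ persists not merely typically but throughout the large-deviation regime $R_n\approx\lambda^{\ast}(\theta)n$ (with $\lambda^{\ast}(\theta)\to0$) that actually governs the infimum defining $\Lambda_p(\theta)$. Pushing the boundary-versus-trace estimate into this atypical regime is the main obstacle.
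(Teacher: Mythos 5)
Parts (i) and the $d=1$ half of (ii) are correct, and they are essentially the paper's own proofs. Your identity $\widetilde E^{p}[U_n]=\sum_{k=0}^{n}E^{\Pp}\left[|C_o|P^o(T_{C_o}>k)\right]$ is \eqref{un-mean} (the paper derives it from the last-exit decomposition \eqref{last-exit} rather than by first entrance plus reversal, an immaterial difference); monotonicity of the summands plus Jensen gives $\Lambda_p(\theta)\ge\theta c_p$, and your fixed-$N$ expansion is exactly the paper's \eqref{posi-upper} (the $O(\theta^2)$ is legitimate at fixed $N$ because $U_{N-1}$ has exponential moments near $0$ by \eqref{AV3} and H\"older), after which Theorem \ref{DisLLN} and $c_0>0$ finish the quotient argument as you say. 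Your $d=1$ factorization $\widetilde E^{p}[e^{\theta U_{n-1}}]=\phi(\theta)^2E[e^{\theta R_{n-1}}]$ is the spelled-out version of what the paper reads off from \eqref{1-bd}, and it correctly gives $\Lambda_p=\Lambda_0$ there.

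The $d=2$ upper bound is a genuine gap, as you yourself flag, and the obstacle is essential to your route, not a technicality. In any bound $E^{\Pp}[e^{\theta N_A}]\le\exp\left(C(\theta)|\partial A|\right)$ the constant is necessarily of order $\theta$, not $o(\theta)$: by Jensen and the estimate preceding \eqref{boundary}, $E^{\Pp}[e^{\theta N_A}]\ge\exp\left(\theta E^{\Pp}[N_A]\right)\ge\exp\left(\theta p|\partial A|/4\right)$ on $\Z^2$. Hence, with only the trivial bound $L_n\le R_n$ at your disposal, the best this strategy can produce is $\Lambda_p(\theta)\le\Lambda_0(\theta+C(\theta))$ with $C(\theta)\ge p\theta/4$; since $f_0((1+c)\theta)/f_0(\theta)\to1$, \eqref{Hamana} forces $\Lambda_0((1+c)\theta)/\Lambda_0(\theta)\to1+c$ for fixed $c>0$, so the resulting ratio bound is $1+p/4>1$, not $1$. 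Your argument therefore genuinely needs $L_n/R_n$ to be small along the walks that dominate $E[e^{\theta R_n}]$, i.e.\ in the upper-deviation regime of $R_n$; but the only available boundary estimates (Lemma \ref{lem-okada} and \eqref{Okad-as}) concern the typical regime, and nothing in the paper or in your sketch controls $L_n/R_n$ under the exponential tilt. Completing your plan would amount to redoing Hamana's large-deviation analysis jointly for the pair $(R_n,L_n)$.

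The paper's proof avoids this entirely, and its key device is the one missing from your proposal: because $\Lambda_p(\theta)$ is an infimum over $n$, an upper bound requires only a single test time, and the paper takes it $\theta$-dependent, $n(\theta):=\theta E[R_{1/\theta}]^2\sim\pi^2\theta^{-1}(\log(1/\theta))^{-2}$. Hamana's theorem, in the form \eqref{Hamana}, says precisely that this scale asymptotically saturates the infimum defining $\Lambda_0$: with $f_0(\theta)=n(\theta)/E[R_{n(\theta)}]$ one has $f_0(\theta)\Lambda_0(\theta)/\theta\to1$. At this scale $\theta n(\theta)\to0$, so the quadratic term in \eqref{posi-upper} can be bounded crudely: by Cauchy--Schwarz and $R_n\le n$ (this is \eqref{2nd-upper}),
\[ \widetilde E^{p}\left[U_n^2e^{\theta U_n}\right]\le E^{\Pp}\left[|C_0|^4\right]^{1/2}E^{P^0}[R_n]\,n\,E^{\Pp}\left[e^{2\theta n|C_0|}\right]^{1/2}, \]
and its contribution to $f_p(\theta)\Lambda_p(\theta)/\theta$ is $O(\theta n(\theta))\to0$, while the linear term is matched to $\Lambda_0$ using only the first-moment, typical-regime fact $\widetilde E^{p}[U_n]/E[R_n]\to1$ of Theorem \ref{lim-rec}, i.e.\ \eqref{lim-rec-0}. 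Combined with the trivial $\Lambda_p\ge\Lambda_0$, this yields the ratio $1$: all large-deviation content is outsourced to Hamana's theorem on $R_n$ alone, and the percolation enlargement never has to be controlled on atypical paths.
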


For $\theta \in (-\infty, \theta_c (p)) \setminus \{0\}$ and $x \in \Z^d$, 
let 
\[ \widetilde \Lambda_p (\theta) := \inf_{n \ge 1} \frac{\log \widetilde E^{p}[\exp(\theta U_{n-1})]}{\log E^{P^x}[\exp(\theta R_{n-1})]}, \ 0 < \theta  < \theta_c (p), \] 
and, 
\[ \widetilde \Lambda_p (\theta) := \sup_{n \ge 1} \frac{\log \widetilde E^{p}[\exp(\theta U_{n-1})]}{\log E^{P^x}[\exp(\theta R_{n-1})]}, \ \theta < 0. \] 
Then, 
\[ \widetilde \Lambda_p (\theta) \ge 1, \ \ 0 < \theta < \theta_c (p), \] 
and, 
\[ \widetilde \Lambda_p (\theta) \le 1, \ \theta < 0.\]
If $p = 0$, then $\widetilde \Lambda_p (\theta) = 1$ for any $\theta$.   
Since $\Lambda_p$ is upper-semicontinuous on $[0, \theta_c(p))$, $\widetilde \Lambda_p$ is so. 

By \eqref{cp-modc} and Theorem \ref{posi}, 
\begin{Cor}\label{disconti}
Let $G = \Z^d, d \ge 3$, and $0 < p < p_{T}(G)$. 
Then, 
 \[ \lim_{\theta \to 0, \theta > 0} \widetilde \Lambda_p (\theta) = \frac{c_p}{c_0}  > 1 \ge  \sup_{\theta < 0} \widetilde \Lambda_p (\theta).  \] 
Hence, there exists a discontinuity of $\widetilde \Lambda_p (\theta)$ at $\theta = 0$.  
\end{Cor}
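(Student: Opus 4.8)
The statement packages three facts, and the plan is to prove them separately. Writing $R=c_p/c_0$, I would establish (a) $R>1$, (b) $\sup_{\theta<0}\widetilde\Lambda_p(\theta)\le 1$, and (c) $\lim_{\theta\to0^+}\widetilde\Lambda_p(\theta)=R$; the displayed chain, and hence the discontinuity at $\theta=0$, then follow at once since the behaviour just to the left of $0$ is $\le 1<R$. Fact (a) is immediate from results already in hand: since $\Z^d$ ($d\ge 3$) is the Cayley graph of the finitely generated infinite group $\Z^d$ and is transient, \eqref{cp-modc} gives $\tfrac{d}{dp}c_p>0$ on $(0,p_T(G))$ and Theorem \ref{cp-property}(i) gives continuity of $c_p$ on $[0,p_T(G))$, so $c_p>c_0$ for $0<p<p_T(G)$, while $c_0=P^o(T_o=+\infty)>0$ by transience. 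Fact (b) is exactly the recorded inequality $\widetilde\Lambda_p(\theta)\le 1$ for $\theta<0$.

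For (c) put $a_n(\theta)=\log\widetilde E^p[\exp(\theta U_{n-1})]$ and $b_n(\theta)=\log E^{P^x}[\exp(\theta R_{n-1})]$, so $\widetilde\Lambda_p(\theta)=\inf_n a_n/b_n$. The upper bound is soft: for fixed $n$, a first-order expansion at $\theta=0$ gives $a_n(\theta)/b_n(\theta)\to \widetilde E^p[U_{n-1}]/E[R_{n-1}]$ as $\theta\downarrow0$, whence $\limsup_{\theta\downarrow0}\widetilde\Lambda_p(\theta)\le \widetilde E^p[U_{n-1}]/E[R_{n-1}]$; letting $n\to\infty$ and using the $L^1$ convergence of Theorem \ref{DisLLN} ($\widetilde E^p[U_{n-1}]/n\to c_p$, $E[R_{n-1}]/n\to c_0$) yields $\limsup\le R$. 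Equivalently one may note $\widetilde\Lambda_p(\theta)\le\Lambda_p(\theta)/\Lambda_0(\theta)$, since subadditivity of the range log-moment gives $b_n\ge n\Lambda_0(\theta)$, and then invoke Theorem \ref{posi}(i).

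The lower bound is the substantial part. By the Markov property $b_n$ is subadditive, so $b_n/n\downarrow\Lambda_0(\theta)$; from $U_{0,m+n}\le U_{0,m}+U_{m,m+n}$ and translation invariance $a_n$ is subadditive, so $a_n/n\downarrow\Lambda_p(\theta)$. Fixing $N$ and splitting the infimum into $n\le N$ and $n>N$, monotonicity gives for $n>N$ that $a_n/b_n=(a_n/n)/(b_n/n)\ge \Lambda_p(\theta)(N+1)/b_{N+1}(\theta)$, which tends to $c_p(N+1)/E[R_N]$ as $\theta\downarrow0$ (using $\Lambda_p(\theta)/\theta\to c_p$ and $b_{N+1}(\theta)/\theta\to E[R_N]$) and then to $R$ as $N\to\infty$. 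For the finitely many $n\le N$ the $\theta\downarrow0$ limit of $a_n/b_n$ is $\widetilde E^p[U_{n-1}]/E[R_{n-1}]$, so everything reduces to the Key Lemma: $\widetilde E^p[U_{n-1}]/E[R_{n-1}]\ge R$ for every $n$. To prove it I would use time reversal of the reversible, vertex-transitive walk to obtain the renewal identities $E[R_n]=\sum_{k=0}^n P^o(T_o>k)$ and $\widetilde E^p[U_n]=\sum_{k=0}^n E^{\Pp}[|C_o|\,P^o(T_{C_o}>k)]$, whose summands decrease to $c_0$ and $c_p$ respectively. Writing $\psi_{C_o}(k)=P^o(T_{C_o}>k)/P^o(T_o>k)\le1$ (as $T_{C_o}\le T_o$), one has $\widetilde E^p[U_n]=\sum_k P^o(T_o>k)\,E^{\Pp}[|C_o|\psi_{C_o}(k)]$ with $\psi_{C_o}(k)\downarrow P^o(T_{C_o}=+\infty)/c_0$, and the Lemma follows once $k\mapsto\psi_{C_o}(k)$ is nonincreasing, i.e. once $T_{C_o}$ precedes $T_o$ in the hazard-rate order.

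I expect this hazard-rate (equivalently likelihood-ratio) domination of the cluster-hitting time $T_{C_o}$ over the basepoint return time $T_o$ to be the main obstacle: it is where reversibility and transitivity must enter, and a naive coupling designed for non-reversible walks can push the ratio the wrong way, so the argument cannot be purely soft. Granting the Lemma, $\liminf_{\theta\downarrow0}\widetilde\Lambda_p(\theta)\ge\min\!\big(R,\,c_p(N+1)/E[R_N]\big)\to R$, which matches the upper bound and gives $\lim_{\theta\to0^+}\widetilde\Lambda_p(\theta)=R$. Combining (a)–(c) yields $\lim_{\theta\to0^+}\widetilde\Lambda_p(\theta)=R>1\ge\sup_{\theta<0}\widetilde\Lambda_p(\theta)$, so $\widetilde\Lambda_p$ has a jump at $\theta=0$.
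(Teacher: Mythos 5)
Your parts (a) and (b), and the upper bound in (c), are correct and match the paper's own (very terse) derivation: \eqref{cp-modc} gives $c_p>c_0$ and hence $c_p/c_0>1$, the bound $\sup_{\theta<0}\widetilde\Lambda_p(\theta)\le 1$ is the inequality recorded immediately before the corollary, and, writing $a_n(\theta)=\log\widetilde E^{p}[\exp(\theta U_{n-1})]$, $b_n(\theta)=\log E[\exp(\theta R_{n-1})]$, the relation $b_n\ge n\Lambda_0(\theta)$ together with Theorem \ref{posi}(i) gives $\limsup_{\theta\downarrow 0}\widetilde\Lambda_p(\theta)\le \lim_{\theta\to0}\Lambda_p(\theta)/\Lambda_0(\theta)=c_p/c_0$. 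So up to that point you are on the paper's route.

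The lower bound is where the substance lies, and your argument does not close it. Two auxiliary claims are unjustified. First, $a_n$ is \emph{not} subadditive as you assert: $U_{0,m}$ and $U_{m,m+n}$ share the percolation configuration, and conditionally on the walk both $e^{\theta U_{0,m}}$ and $e^{\theta U_{m,m+n}}$ are increasing in that configuration, so the FKG inequality gives $E^{\Pp}[e^{\theta U_{0,m}}e^{\theta U_{m,m+n}}]\ge E^{\Pp}[e^{\theta U_{0,m}}]\,E^{\Pp}[e^{\theta U_{m,m+n}}]$ --- the wrong direction for submultiplicativity; this is exactly the self-interaction the paper repeatedly warns about. (Fortunately you only use $a_n\ge n\Lambda_p(\theta)$, which holds by definition of the infimum.) Second, subadditivity of $b_n$ does not give monotonicity of $b_n/n$, so ``$b_n/n\le b_{N+1}/(N+1)$ for $n>N$'' needs repair; this one is fixable, e.g. $b_n\le\lfloor n/m\rfloor b_m+b_r$ yields $b_n/n\le b_m/m+m\theta/n$. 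The essential gap, however, is the Key Lemma $\widetilde E^{p}[U_{n-1}]\ge (c_p/c_0)\,E[R_{n-1}]$ for every $n$, which you grant instead of prove. This cannot be deferred: since $\lim_{\theta\downarrow0}a_n(\theta)/b_n(\theta)=\widetilde E^{p}[U_{n-1}]/E[R_{n-1}]$ for each fixed $n$ and $\widetilde\Lambda_p(\theta)\le a_n(\theta)/b_n(\theta)$, the asserted limit $c_p/c_0$ already \emph{implies} the Key Lemma, so granting it is granting the hard half of the corollary. Your reduction via \eqref{un-mean} to the hazard-type inequality $P^o(T_A>k)\,P^o(T_o=\infty)\ge P^o(T_A=\infty)\,P^o(T_o>k)$ for finite $A\ni o$ (then averaged over $A=C_o$ with weight $|C_o|$) is the right formulation, and it is easy at $k=1$ and correct as $k\to\infty$ (where it amounts to $\ca(A)\ge\ca(\{o\})$); but for intermediate $k$ the two effects you yourself identify --- a larger set is easier to hit, yet conditioning on avoiding it pushes the walk outward --- pull in opposite directions, and nothing in your sketch resolves the competition. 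As written, your proposal establishes only $c_p\le\liminf_{\theta\downarrow0}\widetilde\Lambda_p(\theta)\le\limsup_{\theta\downarrow0}\widetilde\Lambda_p(\theta)\le c_p/c_0$, and since $c_p\le 1$ is possible for small $p$, this does not even yield the discontinuity at $\theta=0$, let alone the stated value of the right-hand limit.
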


\begin{Thm}[Negative exponentials]\label{Dis-DV} 
Assume that $p < p_T (G)$. 
If $G = \Z^d$, then, 
\begin{equation}\label{Dis-DV-int} 
\lim_{n \to \infty} \frac{-\log \widetilde E^{p} [\exp(-\theta U_n)]}{n^{d/(d+2)}} = \lim_{n \to \infty} \frac{-\log E^{P^0} [\exp(-\theta R_n)]}{n^{d/(d+2)}}. 
\end{equation} 
We have that 
\[ \widetilde \Lambda_p (\theta) = 1, \ \theta < 0.\]
\end{Thm}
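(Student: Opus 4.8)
The plan is to sandwich $U_n$ between $R_n$ from below and $R_n$ augmented by a negligible percolation ``overhang'' from above, and then to transfer the Donsker--Varadhan asymptotics for the range (\cite{DV79}) to $U_n$. Throughout I take $\theta > 0$, so that $\exp(-\theta U_n)$ is a genuine negative exponential, and I write $M := E^{\Pp}[|C_o|]$, which is finite because $p < p_T(G)$. Let $\kappa := \lim_{n} n^{-d/(d+2)}\bigl(-\log E^{P^0}[\exp(-\theta R_n)]\bigr) \in (0,\infty)$ be the Donsker--Varadhan constant for the range on $\Z^d$. I will prove that the same limit holds for $U_n$ by establishing $\liminf \ge \kappa$ and $\limsup \le \kappa$ separately.

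The lower bound is immediate. Since $U_n \ge R_n$ pointwise and $\theta > 0$, we have $\exp(-\theta U_n) \le \exp(-\theta R_n)$; as $R_n$ is measurable with respect to the walk alone, integrating first over the percolation gives $\widetilde E^{p}[\exp(-\theta U_n)] \le E^{P^0}[\exp(-\theta R_n)]$, so that $-\log \widetilde E^{p}[\exp(-\theta U_n)] \ge -\log E^{P^0}[\exp(-\theta R_n)]$. Dividing by $n^{d/(d+2)}$ and letting $n \to \infty$ yields $\liminf_n n^{-d/(d+2)}\bigl(-\log \widetilde E^{p}[\exp(-\theta U_n)]\bigr) \ge \kappa$.

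For the upper bound I would use the confinement strategy that realizes the Donsker--Varadhan rate for the range. First I fix a radius $r = r_n$ of order $n^{1/(d+2)}$ for which the ball-confinement lower bound for the range is asymptotically sharp; writing $B := B_{\infty}(0,r)$ and $A_n := \{S[0,n] \subseteq B\}$, this means $\theta |B| - \log P^0(A_n) \le (\kappa + o(1))\, n^{d/(d+2)}$, which is available from \cite{DV79}. The key geometric point is that on $A_n$ one has $\bigcup_{x \in \{S_0,\dots,S_n\}} C_x \subseteq \bigcup_{x \in B} C_x$, and every vertex lying outside $B$ yet connected to $B$ belongs to the open cluster of some vertex of the external boundary $\partial_V B := \{z \notin B : z \text{ is adjacent to some } w \in B\}$ (take the first vertex at which a connecting open path leaves $B$). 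Hence, with $Y := \sum_{z \in \partial_V B} |C_z|$, we get the deterministic inequality $U_n \le |B| + Y$ on $A_n$. Since $A_n$ depends only on the walk, $Y$ only on the independent percolation, and $B$ is deterministic,
\[ \widetilde E^{p}[\exp(-\theta U_n)] \ge \widetilde E^{p}\bigl[\mathbf{1}_{A_n}\exp(-\theta(|B|+Y))\bigr] = P^0(A_n)\,\exp(-\theta|B|)\,E^{\Pp}[\exp(-\theta Y)]. \]
By Jensen's inequality, $E^{\Pp}[\exp(-\theta Y)] \ge \exp(-\theta E^{\Pp}[Y]) \ge \exp(-\theta M |\partial_V B|)$, and since $|\partial_V B| = O(r^{d-1}) = O\bigl(n^{(d-1)/(d+2)}\bigr)$ is of strictly smaller order than $n^{d/(d+2)}$, this factor is $\exp\bigl(-o(n^{d/(d+2)})\bigr)$. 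Taking $-\log$ therefore gives
\[ -\log \widetilde E^{p}[\exp(-\theta U_n)] \le \bigl(\theta|B| - \log P^0(A_n)\bigr) + \theta M |\partial_V B| = (\kappa + o(1))\, n^{d/(d+2)}, \]
so $\limsup_n n^{-d/(d+2)}\bigl(-\log \widetilde E^{p}[\exp(-\theta U_n)]\bigr) \le \kappa$. Combining the two bounds proves the displayed identity.

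The hard part is precisely this upper-bound step: one must confine the walk to a box that is simultaneously near-optimal for the range and large enough that the percolation overhang $Y$ stays controlled. What makes it succeed is the surface-to-volume gap in $\Z^d$ together with subcriticality, namely $E^{\Pp}[Y] \le M|\partial_V B|$ grows like the surface $r^{d-1}$, which is negligible against the volume $r^{d}$ of order $n^{d/(d+2)}$ that governs the Donsker--Varadhan cost; thus the clusters perturb the leading constant only by $o(1)$. Finally, the assertion $\widetilde\Lambda_p(\theta) = 1$ for $\theta < 0$ follows by combining the displayed identity, which forces $\log \widetilde E^{p}[\exp(\theta U_{n-1})]\big/\log E^{P^x}[\exp(\theta R_{n-1})] \to 1$ and hence $\widetilde\Lambda_p(\theta) \ge 1$, with the elementary bound $\widetilde\Lambda_p(\theta) \le 1$ recorded above.
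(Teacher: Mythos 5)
Your proof has the same two--sided architecture as the paper's: the easy direction from $U_n \ge R_n$, and a confinement strategy at spatial scale $n^{1/(d+2)}$ for the matching bound, with the percolation contribution shown to cost only $\exp\bigl(-o(n^{d/(d+2)})\bigr)$. Your handling of the percolation ``overhang'' is in fact a clean alternative to the paper's: you bound $U_n \le |B| + Y$ on the confinement event, where $Y = \sum_{z \in \partial_V B}|C_z|$, exploit the independence of $Y$ (percolation) from $A_n$ (walk), and apply Jensen, which uses only $E^{\Pp}[|C_o|]<+\infty$, i.e.\ exactly $p<p_T(\Z^d)$. The paper instead forces every cluster attached to the confinement region $b_nH_1$ to stay inside a slightly larger region $b_nH_2$ and invokes the exponential decay of subcritical cluster diameters from \cite{AV}. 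Your inclusion $\bigcup_{x\in B}C_x \subseteq B \cup \bigcup_{z\in\partial_V B}C_z$ and the surface-to-volume estimate $|\partial_V B| = O(n^{(d-1)/(d+2)}) = o(n^{d/(d+2)})$ are correct.

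The genuine gap is in the confinement step itself. You take $B = B_\infty(0,r)$, an $\ell^\infty$-ball, i.e.\ a cube, and assert that $\theta|B| - \log P^0(A_n) \le (\kappa+o(1))\,n^{d/(d+2)}$ ``is available from \cite{DV79}''. It is not. The Donsker--Varadhan constant has the variational representation $\kappa = \inf_{H}\{\theta|H| + \lambda(H)\}$ over bounded open $H\subset\R^d$, where $\lambda(H)$ is the principal Dirichlet eigenvalue, and by the Faber--Krahn inequality this infimum is attained only by Euclidean balls; for cubes one has $\kappa_{\textup{cube}} := \inf\{\theta|H|+\lambda(H) : H \textup{ a cube}\} > \kappa$ strictly (for $d\ge 2$; in $d=1$ the two coincide). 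Since the confinement probability satisfies $-\log P^0(S[0,n]\subset B) \ge (\lambda(\textup{cube of half-side }\rho)-o(1))\,n^{d/(d+2)}$ when $r=\rho n^{1/(d+2)}$, your left-hand side is at least $(\kappa_{\textup{cube}}-o(1))\,n^{d/(d+2)}$, so the best your strategy can give is $\limsup_n n^{-d/(d+2)}\bigl(-\log\widetilde E^p[e^{-\theta U_n}]\bigr) \le \kappa_{\textup{cube}}$, and the sandwich $\kappa \le \liminf \le \limsup \le \kappa_{\textup{cube}}$ does not close. The fix is exactly the paper's device: run the argument for arbitrary bounded open sets, with inner approximations $H_0 \subset \overline{H_0}\subset H_1$ so that \cite[Lemma 5.1]{DV79} yields $\liminf_n n^{-d/(d+2)}\log P^0(S[0,n]\subset b_nH_1) \ge -\lambda(H_0)$, and only at the end take the infimum over shapes, identifying $\inf_H\{\theta|H|+\lambda(H)\} = k(\theta,d)$ via \cite{DV79}; equivalently, replace your cube by a lattice Euclidean ball of optimized radius. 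With that single change (and keeping your Jensen treatment of $Y$, which works verbatim for any confinement region with surface order $n^{(d-1)/(d+2)}$), your argument is correct.
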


It is shown in \cite{DV79} that the limit in the right hand side of \eqref{Dis-DV-int} exists and depends only on $(d, \theta)$. 
We will show this by using exponential decay of sizes of clusters in subcritical phases.  

A similar result also holds for graphs other than $\Z^d$. 
By Gibson \cite{Gi},  it is easy to see that 
if $G$ is a Cayley graph of a finitely generated group with polynomial volume growth of degree $d \ge 2$, then, 
\[ -\log \widetilde E^{p} [\exp(-\theta U_{n})] \simeq n^{d/(d+2)}. \]

\subsection{Organization of paper}

Section 2 is devoted to law of large numbers and Theorem \ref{DisLLN} is shown. 
In Section 3, we state some auxiliary results for boundary of the trace of random walk, which are used in the following sections, and then, by using them, Theorem \ref{lim-rec} is shown. 
In Section 4, properties of $c_p$ are considered, and Theorem \ref{cp-property} is shown. 
In Section 5, we deal with finite modification and fluctuations for $\{U_n\}_n$ and Theorems \ref{fm} and \ref{fluc} are shown. 
Section 6 is devoted to the proof of Theorem \ref{lowdim-dis}.   
Section 7 is devoted to the proof of Theorem \ref{unbdd} and remarks for ``one-dimensional" graphs. 
Section 8 is devoted to the proofs of Theorems \ref{posi} and \ref{Dis-DV}.   
In the Appendix, 
we consider $\{E^{\Pp}[U_n]\}_n$, which is a deterministic version of $\{U_n\}_n$.


\section{Law of large numbers} 

In this section, we show Theorem \ref{DisLLN}.   
We first consider the growth of the mean $\widetilde E^{x,p} [U_{n}]$ as $n$ tends to infinity. 

\begin{Prop}[Growth of mean]\label{growth-mean}
Assume that $p < p_T (G)$. 
For any $y \in G$ and $n \ge 1$,  
\[ (1-p)^{\Delta_G}  \sum_{i=0}^{n} \inf_{x \in G} P^x(T_x > i) \le \widetilde E^{y,p} [U_{n}] \le \sup_{x \in G} E^{\Pp}[|C_x|] \sum_{i=0}^{n} \sup_{x \in G} P^x (T_x > i). \] 
\end{Prop}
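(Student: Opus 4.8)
The plan is to record each percolation cluster that the trace meets exactly once, at the \emph{last} time the walk lies in it. Concretely, for $0 \le i \le n$ set
\[ L_i := \{\, S_j \notin C_{S_i} \text{ for all } i < j \le n \,\}, \]
the event that time $i$ is the last instant at which the walk visits the cluster $C_{S_i}$. Since distinct open clusters are disjoint and every cluster meeting $S[0,n]$ has a well-defined last visiting time, each such cluster is recorded once and only once, giving the exact decomposition
\[ U_n = \sum_{i=0}^n |C_{S_i}| \, \mathbf{1}_{L_i}. \]
Both inequalities will be extracted from this single identity: the upper bound by enlarging $L_i$, the lower bound by shrinking it.

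For the upper bound, note $S_i \in C_{S_i}$, so $L_i$ is contained in the event $\{S_j \neq S_i \text{ for all } i < j \le n\}$ that $i$ is the last visit to the \emph{vertex} $S_i$. Because the walk and the percolation are independent under $\widetilde P^{y,p} = P^y \otimes \Pp$, conditioning on the trajectory lets me replace $|C_{S_i}|$ by its $\Pp$-mean $E^{\Pp}[|C_{S_i}|] \le \sup_{x} E^{\Pp}[|C_x|]$. The last-visit event is then handled by the Markov property at time $i$: given $S_i = x$ it has probability $P^x(T_x > n-i)$, which I bound by $\sup_x P^x(T_x > n-i)$. Summing over $i$ and reindexing $n-i \mapsto i$ yields the right-hand inequality. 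I would stress that using the \emph{last} visit rather than the first visit is exactly what makes the Markov property apply directly; a first-visit decomposition would force a time reversal, which on a graph that need not be vertex-transitive introduces awkward degree reweightings.

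For the lower bound I keep only the contributions of isolated visited vertices. Let $I_i$ be the percolation event that every edge incident to $S_i$ is closed; then $C_{S_i} = \{S_i\}$, so $|C_{S_i}| = 1$ and on $I_i$ the cluster-last-visit event $L_i$ coincides with the vertex-last-visit event. Hence
\[ U_n \ge \sum_{i=0}^n \mathbf{1}_{I_i}\, \mathbf{1}\{S_j \neq S_i \text{ for all } i < j \le n\}. \]
Conditioning on the trajectory, $I_i$ has $\Pp$-probability $(1-p)^{\deg(S_i)} \ge (1-p)^{\Delta_G}$ independently of the walk, and the vertex-last-visit probability is $\ge \inf_x P^x(T_x > n-i)$ by the same Markov-property computation; summing and reindexing gives the left-hand inequality. (Alternatively one may invoke $U_n \ge R_n$ together with the last-visit formula for $E^{P^y}[R_n]$, which even removes the factor $(1-p)^{\Delta_G}$; I include the isolated-vertex argument because it stays inside the decomposition above.)

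The only genuine subtlety—the step I would check most carefully—is the exactly-once bookkeeping: one must verify that, with the last-visit convention, each cluster meeting the trace is counted precisely once and that no vertex of the union is omitted, which rests on the disjointness of distinct open clusters and on the fact that $\bigcup_i C_{S_i}$ is the disjoint union of the hit clusters. Once this identity is secured, the two estimates are routine, the product structure of $\widetilde P^{y,p}$ performing the separation of the walk and percolation factors, and the assumption $p < p_T(G)$ guaranteeing $E^{\Pp}[|C_x|] < \infty$ so that the upper bound is non-vacuous.
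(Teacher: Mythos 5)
Your proposal is correct and is essentially the paper's own argument: the same last-exit (cluster) decomposition $U_n = \sum_{i\le n} |C_{S_i}| \mathbf{1}_{\{S_j \notin C_{S_i},\, i < j \le n\}}$, with the upper bound obtained by relaxing to the vertex-last-visit event (equivalently $T_{C_y} \le T_y$) plus independence, and the lower bound by restricting to the isolated-cluster event of probability $(1-p)^{\deg(S_i)} \ge (1-p)^{\Delta_G}$. The only difference is cosmetic: the paper first applies the Markov property to get two-sided bounds in terms of $E^{\Pp}[|C_y| P^y(T_{C_y} > k)]$ and then compares with $P^y(T_y > k)$, whereas you modify the events before invoking the Markov property.
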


\begin{proof}
It holds that 
\begin{equation}\label{last-exit}
U_n = \sum_{0 \le i \le n} \left| C_{S_i}\right| 1_{\{C_{S_{i}} \ne C_{S_{j}}, i < \forall j \le n\}}. 
\end{equation}
We remark that $C_{S_{i}} \ne C_{S_{j}}$ is equivalent to $S_j \notin C_{S_{i}}$. 
By this, \eqref{last-exit} and the Markov property, it holds that for any $x \in G$, 
\begin{equation}\label{un-mean} 
\sum_{k = 0}^{n} \inf_{y \in G} E^{\Pp}\left[|C_{y}| P^{y}\left(T_{C_{y}} > k\right)\right] \le \widetilde E^{x, p}[U_{n}] \le \sum_{k = 0}^{n}  \sup_{y \in G} E^{\Pp}\left[|C_{y}| P^{y}\left(T_{C_{y}} > k\right)\right]. 
\end{equation} 
It is easy to see that for any $y$ and $n$, 
\[ (1-p)^{\Delta_G} \le \frac{E^{\Pp}\left[|C_{y}| P^{y}\left(T_{C_{y}} > n\right)\right]}{P^y (T_y > n)} \le \sup_{z \in G} E^{\Pp}[|C_z|]. \] 
Thus the assertion follows. 
\end{proof}

Now we proceed to the proof of Theorem \ref{DisLLN}. 
We first show the almost sure convergence of \eqref{LLN}, and then show the $L^q$ convergence of \eqref{LLN} for $1 \le q < +\infty$.   

\begin{proof}[Proof of Theorem \ref{DisLLN} for the a.s. convergence of \eqref{LLN}]
Fix a vertex $o$ of $G$.
Since $G$ is vertex-transitive,  it holds that by \eqref{un-mean}, 
\[ \lim_{n \to \infty} \frac{\widetilde E^{p}[U_n]}{n} = E^{\Pp}\left[|C_{o}| P^{o}\left(T_{C_{o}} = +\infty\right)\right]. \]

We will check the assumptions of Liggett's subadditive ergodic theorem \cite{Li}.  
Recall \eqref{Umn-Def}.  
The subadditivity of $\{U_{m,n}\}_{m < n}$ is immediately seen.  
By the Markov property of $\{S_n\}$ and the translation invariance of Bernoulli percolation, 
$\{U_{m,m+n}\}_m$ is stationary. 
By the assumption, we have that 
\[ \widetilde E^{p}[U_{0,1}] \le 2E^{\Pp}[|C_o|] < +\infty.\]   

If the following lemma is shown, then, the almost sure convergence of \eqref{LLN} follows. 

\begin{Lem}\label{mix}
Assume that $p < p_T (G)$. 
$\{U_{nl, (n+1)l}\}_{n \ge 0}$ is strong mixing for any $l \ge 1$.   
Specifically, for any $k_{1}, k_{2} \ge 0$, 
\[ \lim_{m \to \infty} \widetilde P^{p}\left(\{U_{0, l} = k_{1}\} \cap \{U_{m, m+l} = k_{2}\}\right) 
= \widetilde P^{p}(U_{0, l} = k_{1}) \widetilde P^{p}(U_{0, l} = k_{2}). \]
\end{Lem}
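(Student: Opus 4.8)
The plan is to exploit the product structure $\widetilde P^{p} = P^{o} \otimes \Pp$ by integrating out the percolation first and the walk second. I would write the target probability as $\widetilde P^{p}(U_{0,l} = k_{1}, U_{m,m+l} = k_{2}) = E\left[\Pp\left(U_{0,l} = k_{1}, U_{m,m+l} = k_{2} \mid S\right)\right]$, where conditioning on the walk $S$ renders $S[0,l]$ and $S[m,m+l]$ deterministic sets and makes both $U_{0,l}$ and $U_{m,m+l}$ functions of the percolation configuration $\omega$ alone. Because $p < p_{T}(G)$ we have $|C_{o}| < +\infty$ almost surely, so $\Pp(\textup{diam}(C_{o}) > R) \to 0$ as $R \to \infty$. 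Given $\epsilon > 0$, fix $R = R(\epsilon, l)$ with $(l+1)\Pp(\textup{diam}(C_{o}) > R) < \epsilon$; by vertex-transitivity and a union bound over the at most $l+1$ vertices of a window, the ``good'' event that every cluster $C_{x}$ with $x$ in a given window has diameter at most $R$ then has probability at least $1 - \epsilon$, uniformly in the (fixed) walk path.

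On this good event each $U$ agrees with its truncated version that counts only clusters confined to the $R$-neighbourhood of the corresponding window, and which is therefore measurable with respect to the edges meeting $B(S[0,l], R)$ (respectively $B(S[m,m+l], R)$). Next I would introduce the walk-separation event $B_{m} := \{d(S[0,l], S[m,m+l]) > 2R+2\}$. Since $S[0,l] \subseteq B(o,l)$ and $S[m,m+l] \subseteq B(S_{m}, l)$, its complement forces $S_{m} \in B(o, 2R+2l+2)$, a fixed finite set; as $P^{o}(S_{m} = y) \to 0$ for each vertex $y$ (a standard fact on infinite vertex-transitive graphs), we get $P^{o}(B_{m}^{c}) \to 0$. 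On $B_{m}$ the two $R$-neighbourhoods are edge-disjoint, so the two truncated variables depend on disjoint blocks of the product measure $\Pp$ and are independent. Combining the truncation error (of order $\epsilon$) with this independence yields, for every walk path in $B_{m}$, the pointwise bound $\left|\Pp(U_{0,l} = k_{1}, U_{m,m+l} = k_{2}) - g_{1}(S[0,l])\, g_{2}(S[m,m+l])\right| = O(\epsilon)$, where $g_{1}(A) := \Pp(|\bigcup_{x \in A} C_{x}| = k_{1})$ and $g_{2}$ is defined analogously with $k_{2}$.

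It remains to integrate $g_{1}(S[0,l])\, g_{2}(S[m,m+l])$ over the walk, and this I would handle exactly via the Markov property: conditioning on $\mathcal{F}_{m} = \sigma(S_{0}, \dots, S_{m})$ gives $E\left[g_{2}(S[m,m+l]) \mid \mathcal{F}_{m}\right] = E^{P^{S_{m}}}[g_{2}(S[0,l])]$, and since $g_{2}$ is invariant under graph automorphisms and $G$ is vertex-transitive, this conditional expectation equals the constant $\widetilde P^{p}(U_{0,l} = k_{2})$. Hence $E\left[g_{1}(S[0,l])\, g_{2}(S[m,m+l])\right] = \widetilde P^{p}(U_{0,l} = k_{1})\, \widetilde P^{p}(U_{0,l} = k_{2})$ with no error, and restricting the integral to $B_{m}$ costs only $P^{o}(B_{m}^{c}) \to 0$. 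Assembling the three contributions — the negligible mass on $B_{m}^{c}$, the $O(\epsilon)$ truncation-and-independence error on $B_{m}$, and the exact walk-side factorization — and letting first $m \to \infty$ and then $\epsilon \to 0$ proves the claim. The main obstacle is the shared percolation field: $U_{0,l}$ and $U_{m,m+l}$ remain coupled through the single configuration $\omega$ even after the two time windows separate, so the genuine work lies in the truncation step that confines the clusters to disjoint regions while keeping the error uniform in the walk; the walk-side dependence, by contrast, factorizes cleanly through the Markov property together with transitivity.
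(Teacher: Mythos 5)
Your proposal is correct: every step (the Fubini conditioning, the diameter truncation, the measurability of the truncated variables with respect to edges near each window, the disjointness of those edge sets on $B_m$, and the exact walk-side factorization via the Markov property and transitivity) can be made rigorous, and the double limit $m \to \infty$ then $\epsilon \to 0$ is handled in the right order. The paper's own proof has the same skeleton — spatial separation of the two time windows, locality of the percolation events, and the Markov property plus vertex-transitivity to factor the walk — but it implements the decoupling of the shared percolation field differently, and this is the genuine difference. Instead of an a priori truncation of cluster diameters, the paper exploits that the level values $k_1, k_2$ are fixed and finite: on the event $\{U_{0,l}=k_1\}$ the union $\bigcup_{i \le l} C_{S_i}$ has exactly $k_1$ vertices, hence is automatically contained in the $k_1$-neighbourhood of $S[0,l]$, and likewise the second union lies within distance $k_2 + l$ of $S_m$. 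Consequently, once $S_m$ avoids the $(k_1+k_2+l+2)$-neighbourhood of the first window (the paper's event $E_m$, whose probability tends to one by the same heat-kernel fact you invoke), the two percolation events are determined by disjoint sets of edges and are \emph{exactly} independent — no $\epsilon$, no truncated surrogate variables, and a single limit in $m$. The paper also decomposes $\{U_{0,l}=k_1\}$ over the finitely many possible position vectors $(x_0,\dots,x_l)$ of the first window rather than conditioning on the whole path, but that is cosmetic. What your route buys is generality: the diameter cutoff uses only $|C_o| < +\infty$ $\Pp$-a.s., not the confinement forced by the event itself, so your argument yields mixing for arbitrary events in $\sigma(U_{0,l})$ and indeed for any bounded local functionals of the clusters along the windows. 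What the paper's route buys is a cleaner error analysis: exact independence on the separation event, hence no approximation step and no double limit.
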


For $A \subset G$ and $k \ge 0$,  the $k$-neighborhood of $A$ is defined by 
\[ A(k) := \{z \in G : \exists y \in A \textup{ such that } d(z,y) \le k\},\]

\begin{proof}
Informally speaking, we would like to show that $U_{0,l}$ and $U_{m,m+l}$ are ``asymptotically independent" as $m \to \infty$. 
We first decompose the event $\{U_{0,l} = k_1\}$ by possible positions of $\{S_i\}_{i=0}^{l}$, 
and then approximate each decomposed event by an event independent from $\{U_{m, m+l} = k_2\}$.  

Let 
\[ B(x_{0}, \dots, x_{l}) := \bigcap_{ 0 \le i \le l} \{S_{i} = x_{i}\} \cap \left\{\left|\bigcup_{i = 0}^{l} C_{x_{i}}\right| = k_{1} \right\}. \]
We decompose the event $\{U_{0, l} = k_{1}\}$ as follows : 
\[ \{U_{0, l} = k_{1}\} = \bigcup_{x_{0}, \dots, x_{l} \in G} B(x_{0}, \dots, x_{l}).\]

Since this union is disjoint, 
\[ \widetilde P^{p}(U_{0, l} = k_{1}) = \sum_{x_{0}, \dots, x_{l} \in G}  \widetilde P^{p}\left(B(x_{0}, \dots, x_{l})\right), \textup{ and, } \]
\[ \widetilde P^{p}\left(\{U_{0, l} = k_{1}\} \cap \{U_{m, m+l} = k_{2}\}\right)  = \sum_{x_{0}, \dots, x_{l} \in G}  \widetilde P^{p}\left(B(x_{0}, \dots, x_{l}) \cap \{U_{m, m+l} = k_{2}\}\right).  \]
 
Since the number of the possible candidates for $(x_{0}, \dots, x_{l})$ is finite, 
it suffices to show that
for a fixed sequence $(x_{0}, \dots, x_{l})$, 
$B(x_{0}, \dots, x_{l})$ and $\{U_{m, m+l} = k_{2}\}$ are asymptotically independent as $m \to \infty$,  
that is, 
\[  \lim_{m \to \infty} \widetilde P^{p}(B(x_{0}, \dots, x_{l}) \cap \{U_{m, m+l} = k_{2}\}) = \widetilde P^{p}(B(x_{0}, \dots, x_{l})) \widetilde P^{p}(U_{0, l} = k_{2}).  \]

We now consider events approximating $B(x_{0}, \dots, x_{l})$.  
Let $A_{l} := \{x_{0}, \dots, x_{l}\}$  
and \[ E_{m} := \left\{S_{m} \notin A_{l}(k_{1} + k_{2} + l + 2) \right\}.\]  

Since all infinite connected simple graphs have {\it at least linear growth}, 
by Woess \cite[Corollary 14.6]{W}, 
\[ P^{o}(S_{n} = o) \le Cn^{-1/2}, \] 
and hence,  
\[ \lim_{m \to \infty} P^{o}(E_{m}) = 1.\] 

Therefore it suffices to show that $B(x_{0}, \dots, x_{l}) \cap E_{m}$ and $\{U_{m, m+l} = k_{2}\}$ are independent, 
that is, 
\[ \widetilde P^{p}\left(B(x_{0}, \dots, x_{l}) \cap \{U_{m, m+l} = k_{2}\} \cap E_{m}\right) \] 
\begin{equation}\label{suff} 
=  \widetilde P^{p}\left(B(x_{0}, \dots, x_{l}) \cap E_{m}\right) \widetilde P^{p}(U_{0, l} = k_{2}).  
\end{equation}

It holds that   
\[ \widetilde P^{p}\left(B(x_{0}, \dots, x_{l}) \cap \{U_{m, m+l} = k_{2}\} \cap E_{m}\right)\]
\[ = \widetilde P^{p}\left(B(x_{0}, \dots, x_{l}) \cap E_{m} \cap \left\{\left|\bigcup_{z \in S[m, m+l]} C_{z}\right| = k_{2}\right\}\right)\]
\[ = \sum_{y_{0} \notin A_{l}(k_{1} + k_{2} + l + 2), y_{1}, \dots, y_{l}} 
\widetilde P^{p}\left(B(x_{0}, \dots, x_{l}) \cap \{S_{m+j} = y_{j}, 0 \le j \le l\} \cap \left\{\left|\bigcup_{j=0}^{l} C_{y_{j}}\right| = k_{2}\right\}\right)\]
\begin{align*} 
= \sum_{y_{0} \notin A_{l}(k_{1} + k_{2} + l + 2), y_{1}, \dots, y_{l}} 
P^{o}\left(\bigcap_{0 \le i \le l} \{S_{i} = x_{i}\} \cap  \bigcap_{0 \le j \le l} \{S_{m+j} = y_{j}\} \right) \\
\times \Pp\left(\left|\bigcup_{i=0}^{l} C_{x_{i}}\right| = k_{1}, \left|\bigcup_{j=0}^{l} C_{y_{j}}\right| = k_{2}\right). 
\end{align*}

Since $y_{0} \notin A_{l}(k_{1} + k_{2} + l + 2)$,
$\{|\cup_{i} C_{x_{i}}| = k_{1}\}$ and $\{|\cup_{j} C_{y_{j}}| = k_{2}\}$ are independent.
These events are completely determined by configurations in two finite boxes including $\{x_{i}\}_{i}$ and $\{y_{j}\}_{j}$, respectively. 
Therefore, 
\[ \widetilde P^{p}\left(B(x_{0}, \dots, x_{l}) \cap \{U_{m, m+l} = k_{2}\} \cap E_{m}\right) \]
\begin{align*} 
= \sum_{y_{0}; y_{1}, \dots, y_{l}} 
P^{o}\left(\bigcap_{0 \le i \le l} \{S_{i} = x_{i}\} \cap \{S_{m} = y_{0}\}\right) P^{y_{0}}\left(  \bigcap_{0 \le j \le l} \{S_{m+j} = y_{j}\} \right)\\
\times 
\Pp\left(\left|\bigcup_{i} C_{x_{i}}\right| = k_{1}\right)\Pp\left(\left|\bigcup_{j} C_{y_{j}}\right| = k_{2}\right) 
\end{align*}
\begin{align*} 
= \sum_{y_{0}} 
 P^{o}\left(\bigcap_{0 \le i \le l} \{S_{i} = x_{i}\} \cap \{S_{m} = y_{0}\}\right) \Pp\left(\left|\bigcup_{i} C_{x_{i}}\right| = k_{1}\right) \\
\times \left\{ \sum_{y_{1}, \dots, y_{l}} P^{y_{0}}\left(\bigcap_{ 0 \le j \le l } \{S_{j} = y_{j}\}\right)
\Pp\left(\left|\bigcup_{j} C_{y_{j}}\right| = k_{2}\right) \right\} 
\end{align*}
\[ = \sum_{y_{0} \notin A_{l}(k_{1} + k_{2} + l + 2)} 
 P^{o}\left(\bigcap_{0 \le i \le l} \{S_{i} = x_{i}\} \cap \{S_{m} = y_{0}\}\right) \Pp\left(\left|\bigcup_{i} C_{x_{i}}\right| = k_{1}\right) P^{y_{0}} \otimes \Pp(U_{0, l} = k_{2}) \]
\[ = \widetilde P^{p}(U_{0, l} = k_{2}) \widetilde P^{p}(B(x_{0}, \dots, x_{l}) \cap E_{m}).   \]
Thus we have \eqref{suff}.   
\end{proof}

\end{proof}

\begin{proof}[Proof of Theorem \ref{DisLLN} for the $L^q$-convergence of \eqref{LLN}]
It suffices to show that the following holds for any positive integer $q$:
\[ \sup_{n \ge 1} \widetilde E^p \left[\left(\frac{U_n}{n}\right)^q \right] < +\infty. \] 

By \cite[Theorem 3]{AV}, 
for each $p \in [0, p_T (G))$, there is a sufficiently small positive $\theta$ such that  
\begin{equation}\label{AV3}
E^{p}[\exp(\theta |C_o|)] < +\infty 
\end{equation}
holds for sufficiently small $\theta > 0$. 
By using the H\"older inequality,  
\[ \widetilde E^p \left[ \left(\frac{U_n}{n}\right)^q \right] \le \frac{q!}{\theta^q}  \widetilde E^p \left[\exp\left(\theta\frac{U_n}{n}\right) \right] \le  \frac{q!}{\theta^q}  \widetilde E^{p} \left[\exp\left(\theta |C_0|\right)\right] < +\infty. \] 
Hence, for any $q$, 
$\{(U_n / n)^q\}_{n \ge 1}$ are uniformly integrable.  
The $L^q$-convergence of \eqref{LLN} follows from this and the a.s. convergence of \eqref{LLN}. 
\end{proof}

\begin{Rem}
We state a second order expansion of $\widetilde E^p [U_n]$, which corresponds to \cite[Theorem 1]{DE} and \cite[Theorem 3.1]{P66} for the case that $G = \Z^d$, $d \ge 3$.  
By \eqref{un-mean} and \eqref{cp}, it is easy to see that 
if $G$ is vertex-transitive and transient, then, for a vertex $o$ of $G$,
\begin{align}\label{hk}
(1-p)^{\Delta_G} \sum_{k=1}^{n} P^o (k < T_o < +\infty)  &\le \widetilde E^{p}[U_n] - c_p n  \notag\\
&\le E^{\Pp}\left[ |C_o|^2 \right] \sum_{k=1}^{n} \sup_{x,y \in G} P^x (k < T_y < +\infty). 
\end{align}

We give a proof of these inequalities.  
By \eqref{un-mean}, \eqref{cp} and the fact that $G$ is vertex-transitive,
\[ \widetilde E^{p}[U_n] - c_p n = \sum_{k = 1}^{n} E^{\Pp}\left[ |C_{y}| P^{y}\left(k < T_{C_{y}} < +\infty\right)\right] \]
\[ \ge (1-p)^{\Delta_G} \sum_{k=1}^{n} P^o (k < T_o < +\infty). \]
Furthermore, by using the fact that 
$$ P^{y}\left(k < T_{C_{y}} < +\infty\right) \le P^{y}\left(\exists z \in C_y, k < T_{z} < +\infty\right) \le |C_y| \sup_{z \in C_y} P^{y}\left( k < T_{z} < +\infty\right), $$
\[ \widetilde E^{p}[U_n] - c_p n \le \sum_{k = 1}^{n} E^{\Pp}\left[ |C_{y}|^2 \sup_{z \in C_y} P^{y}\left( k < T_{z} < +\infty\right)\right] \]
\[\le E^{\Pp}\left[ |C_o|^2 \right] \sum_{k=1}^{n} \sup_{x,y \in G} P^x (k < T_y < +\infty). \]
Thus we have \eqref{hk}. 
In Remark \ref{hk-alt}, we give an alternative proof of \eqref{hk}. 
The magnitude of growth of $\sum_{k=1}^{n} \sum_{i \ge k} P^x (S_i = x)$ as a function of $n$ 
is $O(1)$ if $d \ge 5$, $O(\log n)$ if $d = 4$, and $O(n^{1/2})$ if $d = 3$, respectively.      

Furthermore, it is known (see Spitzer  \cite[p342]{S76}, for example) that there is a positive constant $c_d$ such that 
\[ c_d k^{1-d/2} \le P^0 (k < T_0 < +\infty), \ \ k \ge 1.\]  
Therefore, 
if $G = \Z^d, d \ge 3$, then, there are two positive constants $c_d$ and $C_d$ such that for any $n \ge 1$, 
\[ c_d (1-p)^{\Delta_G}   \le \frac{\widetilde E^{p}[U_n] - c_p n}{\sum_{k =1}^{n} k^{1-d/2}} \le C_d  E^{\Pp}\left[ |C_o|^2 \right]. \]
\end{Rem}


\section{Boundary of the trace}

This section is devoted to stating some results concerning the inner boundary of the trace of random walk, which will be used in the following Sections.   
Theorem \ref{lim-rec} is shown. 
Okada \cite{Okad} and Asselah-Schapira \cite{AS1, AS2} investigated a law of large numbers, variances, central limit theorems and tail estimates for random walks on $\Z^d$. 
Results we state below are new, unless we refer to the above references. 

\begin{Def}[Inner boundary of the trace]\label{bd-Def}
Let $\mathcal{N}(z)$ be the set of neighborhoods of a vertex $z$ of $G$, that is, 
\[ \mathcal{N}(z) := \left\{y \in G : \{z,y\} \in E(G) \right\}. \]
Let $\partial R_n$ be the set of $x \in \{S_0, \dots, S_n\}$ such that $\mathcal{N}(x) \not\subset \{S_0, \dots, S_n\}$. 
Let $L_n$ be the number of elements of $\partial R_n$. 
\end{Def}

We have that for any $n$,  
\begin{equation}\label{bd-always} 
R_n \le U_n \le R_n + \sum_{x \in \partial R_n}  |C_x|.  
\end{equation} 

Let $\partial^{\textup{e}} R_n$  be the set of $x \notin \{S_0, \dots, S_n\}$ such that $\mathcal{N}(x) \cap \{S_0, \dots, S_n\} \ne \emptyset$.  
Then, it holds that 
\[ L_n \le \Delta_G |\partial^{\textup{e}} R_n|, \]
and 
\[ E^{\Pp}[U_n] - R_n \ge p \left|\partial^{\textup{e}} R_n\right| \ge \frac{p}{\Delta_G} L_n. \]

By this and \eqref{bd-always},   
\begin{equation}\label{boundary} 
\frac{p}{\Delta_G} E^{P^o}[L_n] \le \widetilde E^{p}[U_n] - E^{P^o}[R_n]  \le \sup_{x \in G} E^{\Pp}[|C_x|] E^{P^o}[L_n]. 
\end{equation}

\begin{Lem}\label{bd-rec}
If $G$ is recurrent and vertex-transitive, then, for any vertex $x$ of $G$, 
\[ \lim_{n \to \infty} \frac{E^{P^x}[L_n]}{E^{P^x}[R_n]} = 0. \]
\end{Lem}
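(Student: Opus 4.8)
The plan is to bound $E^{P^o}[L_n]$ by a first-passage decomposition and then reduce the statement to a Toeplitz-type convolution estimate. Take the walk to start from $o$. A visited vertex $z$ lies in $\partial R_n$ precisely when it is visited and has a neighbour that is not visited, so
\[ P^o(z \in \partial R_n) \le \sum_{y \in \mathcal{N}(z)} P^o\big(z \in S[0,n],\, y \notin S[0,n]\big). \]
Decomposing according to the first hitting time $H_z = k$ of $z$ and applying the strong Markov property at time $k$ (on $\{H_z = k\}$ the walk sits at $z$, and the future must avoid $y$ for the remaining $n-k$ steps) gives
\[ P^o\big(z \in S[0,n],\, y \notin S[0,n]\big) \le \sum_{k=0}^n P^o(H_z = k)\, P^z(H_y > n-k). \]
Summing over $z$ and over $y \in \mathcal{N}(z)$, using $\sum_z P^o(H_z = k) = P^o\big(S_k \notin \{S_0,\dots,S_{k-1}\}\big)$ and invoking vertex-transitivity to replace $P^z(H_y > m)$ by $\phi(m) := \max_{y \in \mathcal{N}(o)} P^o(H_y > m)$, I obtain
\[ E^{P^o}[L_n] \le \Delta_G \sum_{k=0}^n r_k\, \phi(n-k), \qquad r_k := P^o\big(S_k \notin \{S_0,\dots,S_{k-1}\}\big). \]
By reversibility of the simple random walk on a vertex-transitive graph, $r_k = P^o(T_o > k)$, and $E^{P^o}[R_n] = \sum_{k=0}^n r_k$.

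Next I would record three facts that follow from recurrence and vertex-transitivity: (i) $r_k = P^o(T_o > k) \to 0$; (ii) $\phi(m) \to 0$, since recurrence gives $P^o(H_y < \infty) = 1$ for each of the finitely many neighbours $y$ of $o$, so each $P^o(H_y > m) \downarrow 0$ and hence their maximum does; and (iii) $Q_n := E^{P^o}[R_n] = \sum_{k=0}^n r_k \to \infty$, because $R_n \uparrow \infty$ almost surely on any infinite graph and $R_n$ is integer-valued. Given the displayed bound, the lemma reduces to showing
\[ \frac{\sum_{k=0}^n r_k\, \phi(n-k)}{\sum_{k=0}^n r_k} \longrightarrow 0. \]

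The heart of the matter is this convolution estimate, and it is also where a naive argument fails: splitting the sum at $k = n/2$ and bounding $\phi \le 1$ on the upper half only yields $Q_n - Q_{n/2}$, which on $\Z^2$ is of order $Q_n$ rather than $o(Q_n)$. Instead I would fix $\varepsilon > 0$ and, using $\phi(m) \to 0$, choose $J$ with $\phi(m) < \varepsilon$ for all $m \ge J$. Splitting at $k = n-J$, the lower part is at most $\varepsilon \sum_{k \le n-J} r_k \le \varepsilon Q_n$, while the upper part (the last $J$ terms) is at most $\phi(0)\sum_{k=n-J+1}^n r_k = \phi(0)(Q_n - Q_{n-J})$; for fixed $J$ this is a sum of $J$ terms $r_k \to 0$, hence $o(1) = o(Q_n)$ since $Q_n \to \infty$. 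Thus $\limsup_n \sum_k r_k \phi(n-k)/Q_n \le \varepsilon$, and letting $\varepsilon \downarrow 0$ finishes the proof. The only genuinely delicate points are the strong-Markov first-passage bound for $P^o(z \in \partial R_n)$ and the uniform decay of $\phi$, both controlled by vertex-transitivity, which collapses the supremum over edges into a finite maximum.
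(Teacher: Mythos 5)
Your proof is correct and follows essentially the same route as the paper's: both decompose $E[L_n]$ over first-visit times, factor (via the Markov property/time reversal) each term into a no-return probability $P^o(T_o>k)$ times a neighbor-avoidance probability for the remaining $n-k$ steps, use recurrence to make the latter at most $\varepsilon$ except in the last $O(1)$ values of $k$, and divide by $E^{P^o}[R_n]=\sum_{k}P^o(T_o>k)\to\infty$. The only cosmetic difference is that the paper writes the factorization as an exact identity involving a second independent walk representing the reversed past, whereas you use a union bound over neighbors together with the identity $r_k=P^o(T_o>k)$; the $\varepsilon$--$M$ (your $\varepsilon$--$J$) splitting and the conclusion are identical.
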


\begin{proof}
Let $\{S_{n}^{\prime}\}_n$ be a simple random walk on $G$ which is independent from $\{S_n\}_n$, 
let $T_z^{\prime}$ be the first hitting time of $z$  by $\{S^{\prime}_n\}$ and let $P^{x,y}$ be the joint law of $\{S_n\}_n$ which starts at $x$ and $\{S^{\prime}_n\}_n$ which starts at $y$.    
Then, by using the fact that $G$ is vertex-transitive, it holds that 
\begin{equation}\label{mean-L}
E^{P^x}[L_n] = \sum_{k = 1}^{n} P^{x, x}\left(T_x > k, \exists y \in \mathcal{N}(x) \textup{ such that } T_y > k \text{ and } T^{\prime}_y > n-k\right). 
\end{equation} 
Let $\epsilon > 0$. 
Then, by noting that $G$ is recurrent and vertex-transitive, there is a large number $M$ such that 
\[ \max_{y \in \mathcal{N}(x)} P^{x,x} (T^{\prime}_y > M) \le \epsilon. \]
By this and \eqref{mean-L}, 
it holds that 
\[ E^{P^x}[L_n] \]
\[ \le M +  \sum_{k = 1}^{n-M} \sum_{y \in \mathcal{N}(x)} P^{x, x}\left(T_x > k, T_y > k, T^{\prime}_y > n-k\right) \]
\[ \le M + \epsilon \Delta_G  \sum_{k = 1}^{n-M} P^x (T_x > k). \]
Since $G$ is vertex-transitive, 
\[ E^{P^x}[R_n] = \sum_{k = 0}^{n} P^x (T_x > k). \]
Hence,
\[ \frac{E^{P^x}[L_n]}{E^{P^x}[R_n]} \le \frac{M}{E^{P^x}[R_n]} + \epsilon \Delta_G.  \]

By using the monotone convergence theorem and the assumption that $G$ is recurrent, 
\[ \lim_{n \to \infty} E^{P^x}[R_n] = \lim_{n \to \infty} \sum_{y \in V(G)} P^x (T_y \le n) = \sum_{y \in V(G)} P^x (T_y < +\infty) = +\infty. \]
Therefore, 
\[ \limsup_{n \to \infty} \frac{E^{P^x}[L_n]}{E^{P^x}[R_n]} \le \epsilon \Delta_G.   \]
Since $\epsilon$ is taken arbitrarily, 
the assertion follows. 
\end{proof}

By Lemma \ref{bd-rec} and  \eqref{boundary}, 
we have Theorem \ref{lim-rec}.

We have that 
\[ P^{x, x}\left(T_x > k, \exists y \in \mathcal{N}(x) \textup{ such that } T_y > k \text{ and } T^{\prime}_y > n-k\right) \] 
\[ \ge P^{x, x}\left(T_x > k, \exists y \in \mathcal{N}(x) \textup{ such that } T_y = +\infty \text{ and } T^{\prime}_y  = +\infty \right). \]   
By this and \eqref{mean-L},  we have that if $G$   is vertex-transitive, 
\begin{equation}\label{joint} 
\lim_{n \to \infty} \frac{E^{P^o}[L_n]}{n} \ge P^{o, o}\left(\{T_o = +\infty\} \cap \bigcup_{y \in \mathcal{N}(o)} \left\{T_y = T_y^{\prime} = +\infty \right\} \right), \ o \in V(G). 
\end{equation}

\begin{Rem}
By \cite[Theorems 5.12 and 5.13]{W}, 
any vertex-transitive recurrent graph is a $d$-dimensional generalized lattice, $d = 1 \textup{ or } 2$, 
that is, a graph whose automorphism group contains the free group $\Z^d$ as a quasi-transitive subgroup.     
\end{Rem}

\begin{Lem}\label{lem-okada}
Let $G = \Z^2$. Then, \\
(i) (\cite[Theorem 2.4]{Okad}) 
There is a constant $c \in [\pi^2 /2, 2\pi^2]$ such that 
\begin{equation}\label{Okada24}
\lim_{n \to \infty} \frac{(\log n)^2}{n} E^{P^0}[L_n] = c. 
\end{equation}
(ii) 
\begin{equation}\label{L-2nd} 
\limsup_{n \to \infty} \frac{(\log n)^4}{n^2} E^{P^0}[L_n^2] <+\infty. 
\end{equation} 
(iii) 
\begin{equation}\label{Okad-as} 
\lim_{n \to \infty} \frac{L_n}{R_n} = 0, \ \textup{ $P^0$-a.s.} 
\end{equation}
\end{Lem}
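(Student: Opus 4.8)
Part (i) is exactly \cite[Theorem 2.4]{Okad}, so nothing needs to be proved there; in particular \eqref{Okada24} gives $E^{P^0}[L_n]\simeq n/(\log n)^2$, which I will use freely. The substantive content is \eqref{L-2nd} and \eqref{Okad-as}, and the plan is to prove the second moment bound \eqref{L-2nd} first and then deduce the almost sure statement \eqref{Okad-as} from it, together with the classical facts that $R_n\log n/n\to\pi$ $P^0$-a.s.\ (Dvoretzky--Erd\"os \cite{DE}) and $\va(R_n)\simeq n^2/(\log n)^4$ (Jain--Pruitt \cite{JP72}).

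For \eqref{L-2nd} I would expand $E^{P^0}[L_n^2]=\sum_{u,v\in\Z^2}P^0(u\in\partial R_n,\ v\in\partial R_n)$ and estimate the two-point function. By Definition \ref{bd-Def}, the event $\{u\in\partial R_n\}$ forces $u$ to be visited and some neighbour $a\in\mathcal N(u)$ to be avoided up to time $n$; since each vertex has four neighbours it suffices, up to the constant $16$, to bound $\sum_{u,v}P^0(u,v\in S[0,n],\ a\notin S[0,n],\ b\notin S[0,n])$ for fixed choices $a\in\mathcal N(u)$, $b\in\mathcal N(v)$. I would decompose this probability by the last visits of the walk to $u$ and to $v$ and express it through the truncated Green function $g_n(x,y)=\sum_{k=0}^n P^x(S_k=y)$, which for $|x-y|\lesssim\sqrt n$ is of order $\log n-\log(1+|x-y|)$. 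The point of the decomposition is that, conditionally on visiting both $u$ and $v$, the two avoidance constraints ``$a$ never hit'' and ``$b$ never hit'' each contribute a factor of order $1/\log n$, coming from the $\Z^2$ estimate that a fixed neighbour of a visited site is missed in the remaining time with probability $\simeq 1/\log n$. Summing the resulting products over the bulk $\{|u|,|v|\lesssim\sqrt n\}$ should produce the bound $C\bigl(\sum_u P^0(u\in\partial R_n)\bigr)^2\simeq (E^{P^0}[L_n])^2\simeq n^2/(\log n)^4$, while the pairs with $u,v$ close contribute only a lower-order term since there are few of them. The hard part is precisely this decoupling: I must show that the two ``avoided neighbour'' factors genuinely multiply, producing $(\log n)^4$ in the denominator rather than a single $(\log n)^2$, and that the sum of products of truncated Green functions over all orderings of the last-exit times stays of the size of $(E^{P^0}[L_n])^2$. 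This two-point last-exit bookkeeping is the main obstacle.

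Granting \eqref{L-2nd}, I would prove \eqref{Okad-as} as follows. Since $R_n\log n/n\to\pi$ a.s., it suffices to show $L_n=o(n/\log n)$ a.s. Fix the subsequence $n_j:=\lfloor\exp(j^{3/4})\rfloor$, so that $\log n_j\simeq j^{3/4}$, $n_{j+1}/n_j\to1$ and $\sum_j(\log n_j)^{-2}=\sum_j j^{-3/2}<+\infty$. By \eqref{L-2nd} and Markov's inequality applied to $L_{n_j}^2$, $P^0(L_{n_j}\ge\varepsilon n_j/\log n_j)\le E^{P^0}[L_{n_j}^2]/(\varepsilon n_j/\log n_j)^2\le C\varepsilon^{-2}(\log n_j)^{-2}$, which is summable, so Borel--Cantelli gives $L_{n_j}=o(n_j/\log n_j)$ a.s. To pass from the subsequence to all $n$ I use the non-monotone inclusion: for $n_j\le n\le n_{j+1}$, every $x\in\partial R_n$ either keeps its avoided neighbour up to time $n_{j+1}$, whence $x\in\partial R_{n_{j+1}}$, or has a neighbour first visited during the window $(n,n_{j+1}]\subseteq(n_j,n_{j+1}]$, whence $x\in(S[n_j,n_{j+1}])(1)$; therefore, on $\Z^2$, $L_n\le L_{n_{j+1}}+5\,|S[n_j,n_{j+1}]|$. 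By the Markov property $|S[n_j,n_{j+1}]|$ is distributed as $R_{m_j}$ with $m_j=n_{j+1}-n_j$, the windows are independent, and $\va(R_{m_j})\simeq m_j^2/(\log m_j)^4$ yields, again by Chebyshev and Borel--Cantelli, $|S[n_j,n_{j+1}]|\le 2E^{P^0}[R_{m_j}]\simeq m_j/\log m_j$ eventually a.s. Inserting these bounds and using that $\log n/n$ is decreasing, both resulting terms vanish: the first because $L_{n_{j+1}}\log n_{j+1}/n_{j+1}\to0$ while the ratios $n_{j+1}/n_j$ and $\log n_j/\log n_{j+1}$ tend to $1$, and the second because it is of order $\delta_j:=\log(n_{j+1}/n_j)\to0$. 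Hence $\lim_n L_n\log n/n=0$ a.s., which is \eqref{Okad-as}.
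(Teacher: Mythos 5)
Your part (ii) is where the real content of the lemma lies, and it is not proved: you set up the expansion $E^{P^0}[L_n^2]=\sum_{u,v}P^0(u\in\partial R_n,\ v\in\partial R_n)$ and then assert that the two avoidance constraints ``should'' decouple, each contributing a factor of order $1/\log n$, while explicitly flagging this decoupling as ``the main obstacle.'' That obstacle \emph{is} the statement \eqref{L-2nd}. The events $\{a\notin S[0,n]\}$ and $\{b\notin S[0,n]\}$ are constraints on one and the same trajectory over the whole time interval $[0,n]$; a last-exit decomposition at the last visits to $u$ and $v$ does not separate them, because each of the three resulting sub-paths must avoid \emph{both} $a$ and $b$ simultaneously, so what you need are quantitative bridge-avoidance estimates (a walk forced to travel between the neighbourhoods of $u$ and of $v$ in a prescribed time must miss a prescribed neighbour of each), and these two-point estimates are exactly the delicate part one would have to prove. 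As written, your argument for \eqref{L-2nd} is a plan with the key step missing, not a proof.

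The paper's proof of (ii) sidesteps two-point decoupling entirely by \emph{localizing the boundary condition in time}, and this is the idea your proposal lacks. With $u_n=\exp(n^{2/3})$ and $a_n=u_n^{1/4}$, the paper counts $V_n:=\left|\left\{x\in S[0,u_n]:\mathcal{N}(x)\not\subset S[T_x,T_x+a_n]\right\}\right|$, i.e.\ it only asks that some neighbour be missed within time $a_n$ of the \emph{first} visit to $x$. Then $\max_{k\in[u_{n-1},u_n]}L_k\le V_n+a_n$ (a boundary point of $R_k$ either fails this local covering condition or was first visited during the last $a_n$ steps), $V_n$ is dominated by a sum of indicators of events $A_k$ determined by increments in a time window of length $O(a_n)$, so events with $|k_1-k_2|>a_n$ are genuinely independent by the Markov property, and the one-point Kesten--Spitzer bound $P^0(A_k)=O\left((\log a_n)^{-2}\right)$ immediately gives $E^{P^0}[V_n^2]=O\left(u_n^2/(\log u_n)^4\right)$ with no correlation analysis at all. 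By contrast, your part (iii) is fine: Chebyshev plus Borel--Cantelli along $n_j=\lfloor\exp(j^{3/4})\rfloor$, the interpolation $L_n\le L_{n_{j+1}}+5\left|S[n_j,n_{j+1}]\right|$ (which correctly handles the non-monotonicity of $\partial R_n$), and the Jain--Pruitt variance bound for the range give \eqref{Okad-as} from \eqref{L-2nd} by a route genuinely different from the paper's (the paper instead extracts the a.s.\ statement directly from the $V_n$ second-moment bound on the $u_n$ scale), and your version has the merit of using \eqref{L-2nd} as a black box; but it only becomes a proof of the lemma once \eqref{L-2nd} itself is actually established.
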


\begin{proof} 
See \cite{Okad} for the proof of (i). 

(ii) Let $u_n = \exp(n^{2/3})$ and $a_n = u_n^{1/4}$. 
If $\exp(n^{2/3})$ or $u_n^{1/4}$ is not an integer, we take the integer part of it. 
Let
\[ V_n := \left|\left\{x \in S[0,u_n] : \mathcal{N}(x) \not\subset S[T_x, T_x + a_n]\right\}\right|. \] 
Then
\begin{equation}\label{LV}
\max_{k \in [u_{n-1}, u_n]} L_k \le V_n + a_n. 
\end{equation}

For $k \in (a_n, u_n - a_n)$, let 
\[ A_k := \left\{S_k \notin S[k- a_{n} -1, k-1], \mathcal{N}(S_k) \not\subset S[T_{S_k}, T_{S_k} + a_n] \right\}.  \] 
Then if $|k_1 - k_2| > a_n$ then 
$A_{k_1}$ and $A_{k_2}$ are independent.  
Therefore, 
\begin{align*} 
E^{P^0}[V_n^2] &\le \left|\{(k_1, k_2) : |k_1 - k_2| \le a_n \textup{ or } k_1 \notin (a_n, u_n - a_n) \textup{ or } k_2 \notin (a_n, u_n - a_n) \}\right| \\
&+ \sum_{k_i \in  (a_n, u_n - a_n)} P^0 (A_{k_1})P^0 (A_{k_2}).  
\end{align*}
By Kesten and Spitzer\footnote{In \cite{KS} it is stated that \cite[Theorem 4a]{KS} holds for aperiodic random walk, but the definition of aperiodicity in \cite{KS} is different from the usual definition of it. The usual definition of aperiodicity is that the infimum of $n$ such that $P^x(S_n = x)$ is positive. We can apply this result to the simple random walk.} \cite[Theorem 4a]{KS},  
\[ P^0 (A_k) \le P^0 (T_0 > a_n) P^0 \left(\mathcal{N}(0) \not\subset S[0, a_n]\right) \le O\left((\log a_n)^{-2}\right). \] 
By using this and 
\[ \left|\left\{(k_1, k_2) : |k_1 - k_2| \le a_n \textup{ or } k_1 \notin (a_n, u_n - a_n) \textup{ or } k_2 \notin (a_n, u_n - a_n) \right\}\right| = O(a_n u_n), \] 
it holds that 
\begin{equation}\label{V2-order}
E^{P^0}\left[V_n^2\right] = O\left(\frac{u_n^2}{(\log u_n)^4}\right). 
\end{equation} 

Recall $u_n = \exp(n^{2/3})$, and $a_n = o\left(u_n / (\log u_n)^{2} \right)$.  
Now \eqref{L-2nd} follows from  \eqref{LV} and  \eqref{V2-order}. 

(iii) We now show \eqref{Okad-as}. 
Since $a_n = o\left( u_n / \log u_n \right)$ and 
\[ \lim_{n \to \infty} \frac{u_n / \log u_n}{u_{n-1} / \log u_{n-1}} = 1, \]
it suffices to show that  
\begin{equation}\label{Vn-conv}
\lim_{n \to 0} \frac{\log u_n}{u_n} V_n = 0, \textup{ $P^0$-a.s.}  
\end{equation}

By \eqref{V2-order},  
\[ P^0 \left(V_n > \frac{u_n}{\log u_n \log\log u_n} \right) \le \left(\frac{\log u_n \log\log u_n}{u_n}\right)^2 E[V_n^2] = O\left(\left(\frac{\log\log u_n}{\log u_n}\right)^2\right). \] 
By using the Borel-Cantelli lemma, 
we have \eqref{Vn-conv}. 
\end{proof}


\section{Properties of $c_p$}

This section is devoted to investigating properties of the limit $c_p$ as a function of $p$.  

\begin{proof}[Proof of Theorem \ref{cp-property} (i)] 
Fix a vertex $o$ of $G$. 
The following proof for the analyticity of $c_p$ is almost identical to the proof of Grimmett \cite[Theorem 6.108]{Gr}, so we give a sketch only. 
Fix a vertex $o$ of $G$.    
We have 
\begin{equation*}
c_p = \sum_{n \ge 1} n \sum_{A \subset G, o \in A,  |A| = n} P^o (T_A = +\infty) \Pp(C_o = A). 
\end{equation*}

Let  $a_{n, m, b}$ be the number of $A \subset G$ such that $o \in A,  |A| = n, \textup{ and } \Pp(C_o = A) = p^m (1-p)^b$. 
Let 
\[ a^{\prime}_{n, m, b} := \sum_{A \subset G, o \in A,  |A| = n, \Pp(C_o = A) = p^m(1-p)^b} P^o (T_A = +\infty). \]
Then it holds that $a^{\prime}_{n, m, b} \le a_{n, m, b}$ and 
\[ c_p = \sum_{n \ge 1, m, b \ge 0} n a^{\prime}_{n, m, b} p^m (1-p)^b. \]
If $a^{\prime}_{n, m, b} > 0$ then $m \le \Delta_G n$ and $b \le \Delta_G n$.  
By replacing $a_{n, m, b}$ with $a^{\prime}_{n, m, b}$ in the proof of \cite[Theorem 6.108]{Gr}, we have the analyticity of $c_p$ if $p$ is small. 

Let $$K(z) := \sum_{n \ge 1} n \sum_{m, b = 0}^{\Delta_G n} a^{\prime}_{n, m, b} z^m (1-z)^b.$$ 
Let $0 < \alpha < \beta < p_T(G)$. 
We will show that $K$ is uniformly convergent on a domain in the complex plane containing $[\alpha, \beta]$ in its interior. 
Let $p \in [\alpha, \beta]$. 
Let $\delta > 0$.  
Assume that  $|z - p| < \delta$. 
Then, 
\[ \left|n \sum_{m, b = 0}^{\Delta_G n} a^{\prime}_{n, m, b} z^m (1-z)^b\right| \le n \sum_{m, b = 0}^{\Delta_G n} a_{n, m, b} (p+\delta)^m (1-p+\delta)^b\]
\[ \le n \left(\frac{p+\delta}{p} \cdot \frac{1-p+\delta}{1-p}\right)^{\Delta_G n} \sum_{m, b = 0}^{\Delta_G n} a_{n, m, b} p^m (1-p)^b \]
\[ = n \left(\frac{p+\delta}{p} \cdot \frac{1-p+\delta}{1-p}\right)^{\Delta_G n} \Pp(|C| = n). \]

By Antunovi\'c-Veseli\'c \cite[Theorem 3]{AV}, 
we have the exponential decay of sizes of clusters of subcritical percolations on $G$.  
That is, there exist two positive constants $c_1(\beta), c_2(\beta) > 0$ such that 
\[ \Pp(|C| = n) \le \mathbb{P}_{\beta} (|C| \ge n) \le c_1(\beta) \exp(-c_2(\beta) n). \]
If we take sufficiently small $\delta > 0$, then, 
\[ \lim_{n \to \infty} n \left(\frac{p+\delta}{p} \cdot \frac{1-p+\delta}{1-p}\right)^{\Delta_G n} c_1(\beta) \exp(-c_2(\beta) n) = 0.\] 
Hence
$K$ is analytic on a domain in the complex plane containing $[\alpha, \beta]$ in its interior, and 
the analyticity of $c_p$ on $p \in [0, p_{T}(G))$ now holds.        
\end{proof}

\begin{Def}
We define the {\it capacity} for subsets of $V(G)$ in terms of the {\it effective resistance}.  
If $A$ is finite, then, we let 
\begin{equation}\label{ca-re}
\ca(A) := \lim_{n \to \infty} \left( R_{\textup{eff}}(A, B(x, n)^c) \right)^{-1}. 
\end{equation} 
\end{Def}

Then, 
\begin{equation}\label{mon-cap}
\ca(A) \le \ca(B), \ A \subset B \subset G.  
\end{equation}

By the argument following Kumagai \cite[Theorem 2.2.5]{Ku}, 
\begin{equation}\label{cap-cap} 
\ca(A) = \sum_{x \in A} P^x (T_A = +\infty), \ A \subset G. 
\end{equation}

\begin{Lem}
If $V(G)$ has a structure of group and any left multiplication induces a graph homomorphism, 
then, 
\begin{equation}\label{cp-cap}
c_p = E^{\Pp} \left[\ca(C_x)\right], \ x \in V(G).   
\end{equation}
\end{Lem}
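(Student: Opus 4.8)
The plan is to start from the effective-resistance formula \eqref{cap-cap} and reduce $E^{\Pp}[\ca(C_x)]$ to the quantity defining $c_p$ in \eqref{cp} by a symmetrization (mass-transport) argument built on the group structure. First I would note that since $p < p_T(G)$ the cluster $C_x$ is $\Pp$-a.s.\ finite, so $\ca(C_x)$ is well defined through \eqref{ca-re}. Moreover, by hypothesis every left multiplication is a graph automorphism, under which both Bernoulli percolation and the simple random walk are invariant; hence $E^{\Pp}[\ca(C_x)]$ does not depend on $x$, and I may take $x = o$ to be the identity element of the group.

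Next I would substitute \eqref{cap-cap} and expand the capacity as a sum over all vertices, using Tonelli (all summands are nonnegative) to exchange the sum and the expectation:
\[
E^{\Pp}[\ca(C_o)] = E^{\Pp}\Big[\sum_{y \in C_o} P^y(T_{C_o} = +\infty)\Big] = \sum_{y \in V(G)} E^{\Pp}\big[1_{\{y \in C_o\}} P^y(T_{C_o} = +\infty)\big].
\]
On the event $\{y \in C_o\}$ one has $C_y = C_o$ as sets, so the hitting time $T_{C_o}$ may be replaced by $T_{C_y}$ inside each summand. This is the point at which the starting vertex of the walk and the cluster it must avoid are both attached to the vertex $y$, which is what makes the automorphism move work.

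The key step is to apply, for each fixed $y$, the automorphism $\gamma = L_{y^{-1}}$ sending $y$ to $o$. Since $\Pp$ is invariant under $\gamma$ and the simple random walk is mapped to a simple random walk by $\gamma$, a change of variables $\omega \mapsto \gamma\omega$ turns $C_y$ into $C_o$, turns $P^y(T_{C_y} = +\infty)$ into $P^o(T_{C_o} = +\infty)$, and turns $1_{\{y \in C_o\}}$ into $1_{\{y^{-1} \in C_o\}}$ (using that lying in the same cluster is a symmetric relation). This yields
\[
E^{\Pp}\big[1_{\{y \in C_o\}} P^y(T_{C_y} = +\infty)\big] = E^{\Pp}\big[1_{\{y^{-1} \in C_o\}} P^o(T_{C_o} = +\infty)\big].
\]
Summing over $y$, re-indexing by $z = y^{-1}$ (a bijection of the group), and using $\sum_{z} 1_{\{z \in C_o\}} = |C_o|$, the sum collapses to $E^{\Pp}[|C_o| P^o(T_{C_o} = +\infty)]$, which is exactly $c_p$ by \eqref{cp}.

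The main obstacle I anticipate is the careful simultaneous bookkeeping in the change-of-variables step: one must track precisely how $\gamma$ acts on the indicator $1_{\{y \in C_o\}}$, on the random set $C_y$, and on the law of the walk all at once, and verify that the measure-preserving property of $\gamma$ for $\Pp$ together with the automorphism-equivariance of the walk genuinely produce the displayed identity. Everything else should be routine: subcriticality legitimizes the use of \eqref{ca-re} for finite sets, and nonnegativity legitimizes the interchange of sum and expectation.
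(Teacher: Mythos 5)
Your proof is correct and is essentially the paper's own argument: both expand $\ca(C_o)$ via \eqref{cap-cap}, interchange the sum over vertices with $E^{\Pp}$, replace $T_{C_o}$ by $T_{C_y}$ on the event $\{y \in C_o\}$, and then apply the left translation sending $y$ to the identity (using invariance of $\Pp$ and equivariance of the walk) followed by re-indexing over inverses to collapse the sum to $E^{\Pp}\left[|C_o| P^o(T_{C_o} = +\infty)\right] = c_p$. The only difference is that you spell out the change-of-variables bookkeeping more explicitly than the paper's four-line computation.
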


We write $x \leftrightarrow y$ if $x$ and $y$ are connected by an open path. 

\begin{proof}
Let $o$ be the unit element of $V(G)$ as group. 
$-x$ denotes the inverse element of an element $x$ as group. 
By \eqref{cap-cap}, 
it holds that 
\[ E^{\Pp}\left[\ca(C_o)\right]  
= \sum_{x \in G} E^{\Pp}\left[ P^{x}(T_{C_o} = +\infty), \ o \leftrightarrow x\right] \]
\[ = \sum_{x \in G} E^{\Pp}\left[ P^{x}(T_{C_x} = +\infty), \ o \leftrightarrow x\right]\]
\[ = \sum_{x \in G} E^{\Pp}\left[ P^{o}(T_{C_o} = +\infty), \ o \leftrightarrow -x\right] 
= c_{p}. \]
\end{proof}

\begin{proof}[Proof of Theorem \ref{cp-property} (ii)] 
Let $p_1 < p_2$. 
Let $p_3 > 0$ such that 
\begin{equation}\label{def-p3} 
p_1 + p_3 - p_1 p_3 = p_2. 
\end{equation}
We regard the percolation with parameter $p_2$ as the independent union of percolation with parameter $p_1$ and percolation with parameter $p_3$.  

Let $C_o^i$, $i = 1,2,3$, be the open clusters containing $o$.  
Then, 
\begin{equation*}\label{123} 
E^{\mathbb{P}_{p_2}} \left[ \ca(C_o^2) \right] = E^{\mathbb{P}_{p_1}}  \left[ E^{\mathbb{P}_{p_3}}  \left[\ca(C_o^1 \cup C_o^3)\right]\right].  
\end{equation*}

By this, \eqref{mon-cap}, \eqref{cp-cap}, and $\mathbb{P}_{p_1}(C_o^1 = \{o\}) = (1- p_1)^{\Delta_G}$,  
we have  that 
\begin{align}\label{p2-p1}
 c_{p_2}-  c_{p_1} &=  E^{\mathbb{P}_{p_1}}  \left[ E^{\mathbb{P}_{p_3}}  \left[\ca(C_o^1 \cup C_o^3)\right] - \ca(C_o^1)\right] \notag \\
&\ge  E^{\mathbb{P}_{p_1}}  \left[ E^{\mathbb{P}_{p_3}}  \left[\ca(C_o^1 \cup C_o^3)\right] - \ca(C_o^1),\  C_o^1 = \{o\} \right] \notag \\ 
&\ge (1- p_1)^{\Delta_G} (c_{p_3} - c_0). 
\end{align}  

On the other hand, by \eqref{boundary}, 
\[ \frac{E^{\widetilde P^{o,p}}[U_n] - E^{P^o} [R_n] }{n} \ge \frac{\delta}{\Delta_G} \frac{E^{P^{o}}[L_n]}{n}. \]
By \eqref{joint} and Theorem \ref{DisLLN},
\[ c_{\delta} - c_{0} = \lim_{n \to \infty} \frac{\delta}{\Delta_G} P^{o,o} \left(\{T_o = +\infty\} \cap \bigcup_{y \in \mathcal{N}(o)} \{T_y = T_y^{\prime} = +\infty \} \right). \]
By the assumption that $G$ is transient,  
\begin{align*}  
\liminf_{\delta \to 0} \frac{c_{\delta} -c_{0}}{\delta} 
\ge \frac{1}{\Delta_G} P^{o,o} \left(\{T_o = +\infty\} \cap \bigcup_{y \in \mathcal{N}(o)} \{T_y = T_y^{\prime} = +\infty \} \right) > 0. 
\end{align*} 

Now \eqref{cp-modc} follows from \eqref{p2-p1} and \eqref{def-p3}. 
\end{proof}

\begin{proof}[Proof of Theorem \ref{cp-property} (iii)] 
This is obtained by a combination of two results. 

By the proof of Lawler \cite[Proposition 2.5.1]{La}, 
there exists a constant $c_d$ such that for every non-empty subset $A$ of $\mathbb{Z}^d$, 
\[ \ca(A) \ge c_d |A|^{1 - 2/d}. \]

By \eqref{cp-cap} and this, 
\[ c_p \ge c_d E^{\Pp}\left[\left|C_0\right|^{1 - 2/d}\right]. \]

By Fitzner-van der Hofstad \cite[Corollary 1.3 and (1.8)]{FH17}, if $d \ge 11$, 
\[ \mathbb{P}_{p_c(\Z^d)}(|C_0| > n) \simeq n^{-1/2}. \]

\[ E^{\Pp}\left[\left|C_0\right|^{1 - 2/d}\right] = \frac{d-2}{d} \sum_{n \ge 1} n^{-2/d}  \mathbb{P}_{p}(|C_0| > n). \]
By the monotone convergence theorem, 
\[ \lim_{p \to p_c} E^{\Pp}\left[\left| C_0 \right|^{1 - 2/d}\right] = \frac{d-2}{d} \sum_{n \ge 1} n^{-2/d}  \mathbb{P}_{p_c}(|C_0| > n).\]
Since $n^{-2/d}  \mathbb{P}_{p_c}(|C_0| > n) \simeq  n^{-(1/2 + 2/d)}$ and $1/2 + 2/d < 1$,  
\[ \sum_{n \ge 1} n^{-2/d}  \mathbb{P}_{p_c}(|C_0| > n) = +\infty.\] 

Thus we have the assertion. 
\end{proof} 

\begin{Rem}
Two random variables $-|C_o|$ and $P^o (T_{C_o} = +\infty)$ are both decreasing random variables under $\Pp$. 
Then, by the FKG inequality, 
\[ E^{\Pp}\left[(-|C_o|) P^o (T_{C_o} = +\infty)\right] \ge E^{\Pp}\left[(-|C_o|) \right]  \widetilde P^{o, p}(T_{C_o} = +\infty).  \]
Hence we have the following upper bound for $c_p$:   
\[ c_p \le E^{\Pp}[|C_o|] \widetilde P^{o, p} (T_{C_o} = +\infty) < E^{\Pp}[|C_o|] P^o(T_o = +\infty). \]  
\end{Rem}


\section{Finite modification and fluctuation}

In this section, Theorems \ref{fm} and \ref{fluc}  are shown. 
We first deal with finite modifications of graphs.  

Let the Hammmersley critical probability 
\[ p_H (G) := \inf\{p \in [0,1] : \Pp(|C_x| = +\infty) > 0\}, \ x \in V(G). \]
This value does not depend on the choice of $x$. 

\begin{Lem}\label{fm-cri-pb}
(i) There are two positive constants $C$ and $c$ such that for any vertex $x$ of $G^{\prime}$ and $n \ge 1$,  
\[ {\Pp}^{G^{\prime}} \left( |C_x| > n \right) \le C \exp(-cn). \]
(ii) \[ p_H (G) = p_T (G) = p_H (G^{\prime}) = p_T (G^{\prime}). \]
\end{Lem}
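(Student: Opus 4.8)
The plan is to prove (i) first and then deduce (ii) from it together with the sharpness of the phase transition on the vertex-transitive graph $G$.

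For (i), the idea is to reduce percolation on $G'$ to percolation on $G' \setminus D' \cong G \setminus D$, which is in turn dominated by percolation on the vertex-transitive graph $G$, where exponential decay below $p_T(G)$ is supplied by \cite[Theorem 3]{AV}. First I would note that, under the coupling sampling a configuration on all edges of $G$ and restricting it, every open path in $G \setminus D$ is an open path in $G$, so $C_x^{G\setminus D} \subseteq C_x^{G}$ for $x \in G \setminus D$; hence ${\Pp}^{G\setminus D}(|C_x| > n) \le {\Pp}^{G}(|C_x| > n) \le C_0 \exp(-c_0 n)$ with the constants $C_0, c_0$ of \cite{AV} (valid since $p < p_T(G)$). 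Transporting this through the isomorphism $\phi$ gives the same exponential bound for ${\Pp}^{G'\setminus D'}(|C_x| > n)$, uniformly in $x \in G'\setminus D'$.

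Next I would reattach the finite set $D'$. Let $N$ be the set of vertices of $G'\setminus D'$ adjacent to $D'$, so $M := |N| \le \Delta_{G'}|D'|$ is finite. Given the $G'$-cluster $C_x$, every vertex $v \in C_x \cap (G'\setminus D')$ is joined to $x$ by an open path; tracing it until it first meets $D'$ (or all the way to $x$ if it never does) shows $v$ lies in the $(G'\setminus D')$-cluster of some $y \in N \cup (\{x\}\cap(G'\setminus D'))$. Consequently
\[ |C_x| \le |D'| + \sum_{y \in N \cup (\{x\}\cap(G'\setminus D'))} |C_y^{G'\setminus D'}|, \]
a sum of at most $M+1$ terms. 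A union bound over which term exceeds $(n-|D'|)/(M+1)$, using that the $(G'\setminus D')$-clusters are measurable with respect to the edges inside $G'\setminus D'$ and hence have the same law under ${\Pp}^{G'}$ as under ${\Pp}^{G'\setminus D'}$, then yields ${\Pp}^{G'}(|C_x| > n) \le (M+1)C_0 \exp(-c_0 (n-|D'|)/(M+1))$ for $n > 2|D'|$; absorbing the finitely many small values of $n$ into the constant gives (i).

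For (ii) I would run a chain of inequalities. Recall that $p_T(G) = p_H(G)$ for the vertex-transitive graph $G$ (sharpness of the phase transition; the exponential-decay threshold of \cite{AV} is exactly $p_H(G)$), and that $p_T \le p_H$ for any graph since finite expected cluster size rules out an infinite cluster, so in particular $p_T(G') \le p_H(G')$. From (i), whenever $p < p_T(G)$ the $G'$-cluster has finite expectation and is a.s.\ finite, whence $p_T(G') \ge p_T(G)$ and $p_H(G') \ge p_T(G)$. It remains to show $p_H(G') \le p_H(G)$: fix $p > p_H(G)$, so $G$ a.s.\ has an infinite open cluster $\mathcal{C}$; deleting the finitely many vertices of $D$ removes only finitely many open edges from the infinite connected graph $\mathcal{C}$, which therefore retains an infinite open component in $G \setminus D$, and transporting by $\phi$ and adding back the edges of $D'$ (which only enlarges clusters) produces an infinite cluster in $G'$ with positive probability, i.e.\ $p \ge p_H(G')$. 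Combining, $p_T(G) \le p_T(G') \le p_H(G') \le p_H(G) = p_T(G)$, so all four values coincide. The main obstacle I expect is the reduction in (i): a large $G'$-cluster can be assembled from several far-apart $(G\setminus D)$-pieces glued through the modification region $D'$, so the exponential estimate cannot be quoted directly and must be routed through the finite-cut decomposition above; controlling the bounded number of pieces and the measurability of the $(G'\setminus D')$-clusters is the delicate point, while the rest is bookkeeping with the two standard facts $p_T \le p_H$ and monotonicity in the edge set.
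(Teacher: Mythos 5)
Your proposal is correct. Part (ii) is essentially the paper's own argument: both deduce $p_T(G) \le p_T(G') \le p_H(G')$ from part (i) together with the generic inequality $p_T \le p_H$, prove $p_H(G') \le p_H(G)$ by transporting an infinite open cluster of $G \setminus D$ through $\phi$ into $G'$ (the paper phrases this as classifying infinite self-avoiding paths by their last exit from $D$, you as deleting a finite set from an infinite, locally finite, connected cluster; these are the same observation), and close the chain with $p_H(G) = p_T(G)$ from \cite{AV}. Part (i), however, takes a genuinely different route at the key step. The paper conditions on the configuration $\omega$ of the finitely many edges in $E(G') \setminus E(G' \setminus D')$, identifies the remaining randomness with percolation on $G \setminus D$, and then compares with the event $O_1$ that all edges of $E(G) \setminus E(G \setminus D)$ are open, asserting the inclusion $O_1 \times A(\omega) \subset \{ |C_z| > n - |D'| \}$ for a fixed $z \in D$; that is, it glues all pieces of the large $G'$-cluster into the $G$-cluster of a single vertex $z$, paying only a constant prefactor and keeping the decay rate of \cite{AV}. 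You never glue: you bound $|C_x|$ by $|D'|$ plus a sum of at most $M+1$ cluster volumes in $G' \setminus D'$ rooted at $N \cup (\{x\} \cap (G' \setminus D'))$, and apply a union bound, which costs a factor $M+1$ in the exponential rate --- immaterial here, since the lemma claims only the existence of some $c > 0$. Your version buys robustness: the paper's inclusion tacitly needs each $(G \setminus D)$-piece of the large cluster to be adjacent to $D$ in $G$, and the open star of $D$ to be connected, neither of which is guaranteed by $\phi$, which is an isomorphism of the complements only and need not relate adjacency to $D'$ in $G'$ with adjacency to $D$ in $G$; your union bound needs nothing beyond measurability of the $(G' \setminus D')$-clusters with respect to $E(G' \setminus D')$ and monotonicity in the edge set, so it covers Definition \ref{def-fm} in full generality.
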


\begin{proof}
Let $\phi : G \setminus D \to  G^{\prime} \setminus D^{\prime}$ be a graph isomorphism. 

(i) There is nothing to show if $p = 0$. So we assume that $p > 0$.   
Let 
\[ E(G \setminus D) := \{\{x,y\} \in E(G) : x, y \in G \setminus D\} \]
and 
\[ E(G^{\prime} \setminus D^{\prime}) := \{\{x,y\} \in E(G^{\prime}) : x, y \in G^{\prime} \setminus D^{\prime}\}. \]
Now we can decompose $\{|C_x| > n \}$ as follows: 
\[ \{|C_x| > n \} = \bigcup_{\omega \in \{0,1\}^{E(G^{\prime}) \setminus E(G^{\prime} \setminus D^{\prime})}} \{\omega\} \times A(\omega), \]
where $A(\omega) \subset \{0,1\}^{E(G^{\prime} \setminus D^{\prime})}$. 
Hence, 
\[ {\Pp}^{G^{\prime}} \left( |C_x| > n \right) \le (\max\{p, 1-p\})^{|E(G^{\prime}) \setminus E(G^{\prime} \setminus D^{\prime})|} \sum_{\omega \in \{0,1\}^{E(G^{\prime}) \setminus E(G^{\prime} \setminus D^{\prime})}} {\Pp}^{G^{\prime} \setminus D^{\prime}}(A(\omega)) \]
where we denote the Bernoulli measure with parameter $p$ on $\{0,1\}^{E(G^{\prime} \setminus D^{\prime})}$ by ${\Pp}^{G^{\prime} \setminus D^{\prime}}$.

Let $O_1$ be the event that all edges of $E(G) \setminus E(G \setminus D)$ are open. 
By identifying $G \setminus D$ and $G^{\prime} \setminus D^{\prime}$, 
\[ {\Pp}^{G^{\prime} \setminus D^{\prime}}(A(\omega)) = \frac{{\Pp}^{G}(O_1 \times A(\omega))}{p^{|E(G) \setminus E(G \setminus D)|}} \] 
Fix a vertex $z$ of $D$. 
Then, 
\[ O_1 \times A(\omega) \subset \{ |C_z| > n - |D^{\prime}| \} \]
Hence,
\[  {\Pp}^{G^{\prime}} \left( |C_x| > n \right) \le \frac{(\max\{p, 1-p\})^{|E(G^{\prime}) \setminus E(G^{\prime} \setminus D^{\prime})|}}{p^{|E(G) \setminus E(G \setminus D)|}} {\Pp}^{G}\left(|C_z| > n - |D^{\prime}|\right). \]
By this and \cite{AV},  we have the assertion. 

(ii) 
By (i), $p_H (G) \le p_H (G^{\prime}) = p_T (G^{\prime})$.      
By \cite{AV}, we also have $p_H (G) = p_T (G)$.     
Assume $p > p_H (G) = p_T (G)$. 
Then, by using the fact that  the exterior boundary of $D^{\prime}$ is finite and classifying any infinite self-avoiding paths of $G$ by the last exit point from $D$, 
we have that for some $x$ in the exterior boundary of $D^{\prime}$, 
\[ {\Pp}^{G^{\prime}} (|C_x| = +\infty) \ge {\Pp}^{G}  \left(\textup{there is an infinite path from $\phi^{-1}(x)$ in $G \setminus D$}\right) > 0. \]
Hence $p > p_H (G^{\prime}) = p_T (G^{\prime})$. 
Since $p$ is taken arbitrarily, $p_H (G) \ge p_H (G^{\prime})$.  
\end{proof} 

\begin{proof}[Proof of Theorem \ref{fm}]   
Let $G^{\prime}$ be a finite modification of $G$.  
Let $\phi : G \setminus D \to  G^{\prime} \setminus D^{\prime}$ be a graph isomorphism. 
In this proof, constants (denoted by $C$, $c$ etc) depend only on $G$ and $G^{\prime}$.   

Let $o$ be a vertex of $G^{\prime}$.  
Here $D(k)$ and $D^{\prime}(k)$ denotes the $k$-neighborhoods of $D$ in $G$, and $D^{\prime}$ in $G^{\prime}$, respectively.    

(i) First we give a rough idea of proof. 
If the random walk exits a large ball containing $D^{\prime}$, 
then, with high probability it does not return $D^{\prime}$ again and the behavior of the random walk is identical with the behavior of the simple random walk on $G$.   

Let 
\[ T^{(n)}_{D^{\prime}} := \inf \left\{i > T_{B_{G^{\prime}} (o, n)^c} : S_i \in D^{\prime} \right\}. \]

Fix $m > 4N_0$.  
Let $x$ be a vertex of $G^{\prime}$ such that $d_{G^{\prime}}(x, o) > 2m + \textup{diam}(D^{\prime})$. 
Then, by Theorem \ref{DisLLN}, 
\[ \lim_{n \to \infty} \frac{U_n}{n} = c_{G,p}, \ P_{G}^{\phi^{-1}(x)} \otimes {\Pp}^{G} \textup{-a.s.}  \]
This implies that 
\[ \lim_{n \to \infty} \frac{U_n}{n} = c_{G,p}, \ P_{G}^{\phi^{-1}(x)} \otimes {\Pp}^{G} \textup{-a.s. on } \{ T_{D} = +\infty\} \times \{D \not\leftrightarrow G \setminus D(m) \}.  \]

By this and the definition of $G$, 
\[  \lim_{n \to \infty} \frac{U_n}{n} = c_{G,p}, \ P_{G^{\prime}}^{x} \otimes {\Pp}^{G^{\prime}} 
\textup{-a.s. on } \{ T_{D^{\prime}} = +\infty\} \times \{D^{\prime} \not\leftrightarrow G^{\prime} \setminus  D^{\prime}(m) \}.  \]

By this and the strong Markov property, 
\[ \lim_{n \to \infty} \frac{U_{T_{B_{G^{\prime}}(o, 3m)^c}, n}}{ n - T_{B_{G^{\prime}}(o, 3m)^c}} = c_{G,p},  P_{G^{\prime}}^{o} \otimes {\Pp}^{G^{\prime}} \textup{-a.s. on } \{T^{(3m)}_{D^{\prime}} = +\infty\} \times \{D^{\prime} \not\leftrightarrow G^{\prime} \setminus  D^{\prime}(m) \}.  \]
Here and henceforth we let 
\[  \frac{U_{T_{B_{G^{\prime}}(o, 3m)^c}, n}}{ n - T_{B_{G^{\prime}}(o, 3m)^c}} := 0, \textup{ if } n \le T_{B_{G^{\prime}}(o, 3m)^c}.  \]

It holds that 
\[ U_n = U_{0,m} + U_{m,n} - \left| \left( \bigcup_{i \in [0,m]} C_{S_i}\right) \cap \left( \bigcup_{i \in [m, n]} C_{S_i} \right) \right|, \ \ 0 \le m \le n. \]
By using the transience of $G^{\prime}$, 
\[ {\Pp}^{G^{\prime}} \left(T_{B_{G^{\prime}}(o, 3m)^c} < +\infty\right) = 1. \]
Therefore, for each $m$, 
\[ \lim_{n \to \infty} \left|\frac{U_n}{n} -  \frac{U_{T_{B_{G^{\prime}}(o, 3m)^c}, n}}{ n - T_{B_{G^{\prime}}(o, 3m)^c}}\right| = 0, \ P_{G^{\prime}}^{o} \otimes {\Pp}^{G^{\prime}} \textup{-a.s.}\]
Therefore, 
\[ \lim_{n \to \infty}  \frac{U_{n}}{n} = c_{G,p}, \ P_{G^{\prime}}^{o} \otimes {\Pp}^{G^{\prime}} \textup{-a.s. on } \{T^{(3m)}_{D^{\prime}} = +\infty\} \times \{D^{\prime} \not\leftrightarrow G^{\prime} \setminus  D^{\prime}(m) \}.  \]

By using the transience of $G^{\prime}$,  we have that 
\[ \lim_{m \to \infty} P_{G^{\prime}}^{o} \left(T^{(3m)}_{D^{\prime}} = +\infty\right) = 1. \]

By noting that $p < p_T (G) = p_T (G^{\prime})$ and the finiteness of $D$ and $D^{\prime}$,  
\[ \lim_{m \to \infty} {\Pp}^{G^{\prime}} (D^{\prime} \not\leftrightarrow G^{\prime} \setminus D^{\prime}(m)) =  \lim_{m \to \infty} {\Pp}^{G} (D \not\leftrightarrow G \setminus D(m)) = 1. \]
Since the event $\{ T^{(3m)}_{D^{\prime}} = +\infty\} \times \{D^{\prime} \not\leftrightarrow  G^{\prime} \setminus D^{\prime}(m)\}$ is increasing with respect to $m$, we have \eqref{fm-as}.  

Now we show \eqref{Lv1-conv}. 

\begin{Lem}\label{uc}  
Assume that $G^{\prime}$ satisfies 
\begin{equation}\label{d2} 
\lim_{k \to \infty} \sup_{x, y \in G^{\prime}} P_{G^{\prime}}^x (k < T_y < +\infty) = 0. 
\end{equation}
and $\{|C_x| : x \in G^{\prime}\}$ are uniformly integrable with respect to $\Pp$. 
Then, 
\[ \lim_{n \to \infty} \sup_{x \in G^{\prime}} E^{\Pp}_{G^{\prime}} \left[ |C_x| P^x_{G^{\prime}} (n < T_{C_x} < +\infty) \right] = 0. \]
\end{Lem}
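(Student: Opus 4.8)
The plan is to control the first-passage probability $P^x_{G^{\prime}}(n < T_{C_x} < +\infty)$ by a quantity governed by hypothesis \eqref{d2}, and then to play the two hypotheses against each other through a truncation in the cluster size. The first step is the pointwise (in the percolation configuration) inequality
\[ P^x_{G^{\prime}}(n < T_{C_x} < +\infty) \le |C_x|\, \epsilon_n, \qquad \epsilon_n := \sup_{y, z \in G^{\prime}} P^y_{G^{\prime}}(n < T_z < +\infty). \]
This holds because on $\{n < T_{C_x} < +\infty\}$ the walk meets $C_x$ for the first time at a finite moment $T_{C_x} > n$, landing on some $z \in C_x$; since $C_x$, and in particular $z$, has not been visited before, $T_z = T_{C_x}$, so $\{n < T_{C_x} < +\infty\} \subset \bigcup_{z \in C_x}\{n < T_z < +\infty\}$, and a union bound over the finite cluster gives the claim. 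By \eqref{d2} we have $\epsilon_n \to 0$.

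Then, given $\delta > 0$, I would split $E^{\Pp}_{G^{\prime}}[|C_x| P^x_{G^{\prime}}(n < T_{C_x} < +\infty)]$ according to $\{|C_x| \le M\}$ and $\{|C_x| > M\}$. On the large-cluster part I bound the walk probability by $1$, so that the contribution is at most $E^{\Pp}_{G^{\prime}}[|C_x|, |C_x| > M]$; by the assumed uniform integrability this falls below $\delta/2$ for all $x$ once $M$ is large enough. On the small-cluster part I insert the first-step inequality, bounding the contribution by $\epsilon_n E^{\Pp}_{G^{\prime}}[|C_x|^2, |C_x| \le M] \le M^2 \epsilon_n$, which, $M$ now being fixed, drops below $\delta/2$ for all large $n$. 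Summing the two pieces gives $\sup_{x \in G^{\prime}} E^{\Pp}_{G^{\prime}}[|C_x| P^x_{G^{\prime}}(n < T_{C_x} < +\infty)] \le \delta$ for all large $n$, and letting $\delta \to 0$ proves the lemma.

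I do not expect a genuine obstacle, since the two hypotheses are tailored exactly to the two regimes. The one point to handle carefully is the order of quantifiers together with the uniformity in the starting vertex $x$: the truncation level $M$ must be chosen first, using uniform integrability uniformly in $x$, and only afterwards $n$, using \eqref{d2}, which is itself uniform in the base and target vertices. Keeping every estimate independent of $x$ is precisely what allows the supremum over $x$ to survive the passage to the limit.
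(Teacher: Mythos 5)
Your proof is correct. The key step, containing the event $\{n < T_{C_x} < +\infty\}$ in $\bigcup_{z \in C_x}\{n < T_z < +\infty\}$ via the first-entrance point and then applying a union bound over the finite cluster, is exactly the mechanism the paper's proof relies on. Where you genuinely diverge is in how the percolation expectation is handled afterwards. The paper's printed proof bounds the quantity by $\sup_{x} E^{\Pp}_{G^{\prime}}[|C_x|^2]$ times a walk supremum (the displayed inequality is in fact garbled -- as written it is circular, with the quantity to be estimated reappearing on the right-hand side -- but this is evidently the intended reading), and then appeals to ``uniform integrability.'' That route implicitly requires $\sup_x E^{\Pp}_{G^{\prime}}[|C_x|^2] < +\infty$, which does \emph{not} follow from uniform integrability of $\{|C_x|\}$ alone; in the paper it is true only because Lemma \ref{fm-cri-pb} supplies uniform exponential tails, a much stronger input than the lemma's stated hypothesis. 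Your truncation at level $M$ -- bounding the walk probability by $1$ on $\{|C_x| > M\}$ and by $|C_x|\epsilon_n \le M\epsilon_n$ on $\{|C_x| \le M\}$ -- is precisely what makes the lemma true under the hypotheses as literally stated, with every estimate uniform in $x$ and the quantifiers in the right order ($M$ first via uniform integrability, then $n$ via \eqref{d2}). So your argument is not just correct but is the cleaner proof of the lemma as formulated; the paper's version only becomes rigorous if one either strengthens the hypothesis to a uniform second-moment bound or inserts the truncation you performed.
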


The assumption of uniform integrability above is satisfied due to Lemma \ref{fm-cri-pb}. 
Since $G$ is vertex-transitive and satisfies $V(d)$ for some $d > 2$, by noting \cite[Corollary 14.5]{W}, 
the heat kernel of $G$ satisfies the Nash inequality of order $d/2 > 1$. 
By using the fact that $G^{\prime}$ is roughly isometric to $G$ and the stability of the Nash inequality under rough isometries, 
the heat kernel of $G^{\prime}$ satisfies the Nash inequality of order $d/2$.  
Hence \eqref{d2} holds. 

\begin{proof} 
It follows that for each $x \in G^{\prime}$,  
\[ E_{G^{\prime}}^{\Pp}\left[ |C_x| P_{G^{\prime}}^x (n < T_{C_x} < +\infty) \right] \]
\[ \le \sup_{x \in G^{\prime}} E_{G^{\prime}}^{\Pp}\left[ |C_x|^2 \right] \sup_{x \in G^{\prime}} E^{\Pp}_{G^{\prime}} \left[ |C_x| P^x_{G^{\prime}} (n < T_{C_x} < +\infty) \right]\] 
Now the assertion follows from this and the assumption of uniform integrability. 
\end{proof}

Let $\epsilon > 0$. 
Then, by Lemma \ref{uc}, there is $m_0$ such that for some (or equivalently any) $x \in G$, 
\begin{equation}\label{infty-infty}  
E^{\Pp}_{G} \left[ |C_x| P_{G}^x (m_0 < T_{C_x} < +\infty) \right]  
+ \sup_{y \in G^{\prime}} E^{\Pp}_{G^{\prime}} \left[ |C_y| P_{G^{\prime}}^y (m_0 < T_{C_y} < +\infty) \right]  \le \epsilon,   
\end{equation}  
and, 
\begin{equation}\label{infty-infty-2}  
E^{\Pp}_{G} \left[ |C_x|, |C_x| \ge m_0 /2 \right] + \sup_{y \in G^{\prime}} E^{\Pp}_{G^{\prime}} \left[ |C_y|, |C_y| \ge m_0 /2 \right] \le \epsilon.   
\end{equation} 
Furthermore, the structure of $G^{\prime} \setminus B(o, m_0)$ is the same as a subgraph of $G$.   

By \cite[Proposition 4.3.2]{Ku},  
there is $n_0 > 2m_0$ such that for any $k \ge n_0$
\[ P^o_{G^{\prime}} \left(S_k \in B_{G^{\prime}}(o, 2m_0)\right)  \le \epsilon.  \]

Then, for any $n > n_0$,  
\[ \left| \widetilde E^{o,p}_{G^{\prime}} [U_n]  - \sum_{k=0}^{n} E^{\Pp}_{G} \left[ |C_x| P_{G}^x (k < T_{C_x}) \right]\right| \]
\[ \le \sum_{k=0}^{n} \sum_y P^o_{G^{\prime}} (S_{k} = y)  \left| E_{G^{\prime}}^{\Pp} [|C_y| P^y_{G^{\prime}} (T_{C_y} > n-k)] - E^{\Pp}_{G} \left[ |C_x| P_{G}^x (T_{C_x} > n-k) \right]\right|  \]
\[ \le (n_0 +  (n - n_0) \epsilon + m_0) \left( \sup_{y \in G^{\prime}} E^{\Pp}_{G^{\prime}}  \left[ |C_y| \right] +  E^{\Pp}_{G}  \left[ |C_x| \right] \right)  \]
\begin{equation}\label{3.5.1} 
+ \sum_{k = n_0}^{n - m_0} \sum_{y \notin B_{G^{\prime}}(o, 2m_0)} P^o_{G^{\prime}} (S_n = y) 
\left|E^{\Pp}_{G^{\prime}} \left[ |C_y| P_{G^{\prime}}^y (n-k < T_{C_y}) \right] - E^{\Pp}_{G} \left[ |C_x| P_{G}^x (n-k < T_{C_x}) \right]\right|.   
\end{equation}

By \eqref{infty-infty}, it follows that for any $y \notin B(o, 2m_0)$ and $l > m_0$,  
\[ \left| E^{\Pp}_{G^{\prime}} \left[ |C_y| P_{G^{\prime}}^y (l < T_{C_y}) \right] - E^{\Pp}_{G} \left[ |C_x| P_{G}^x (l < T_{C_x}) \right] \right| \]
\begin{equation}\label{3.5.2}
\le 2\epsilon + \left| E^{\Pp}_{G^{\prime}} \left[ |C_y| P_{G^{\prime}}^y (m_0 < T_{C_y}) \right] - E^{\Pp}_{G} \left[ |C_x| P_{G}^x (m_0 < T_{C_x}) \right] \right|. 
\end{equation} 

By the assumption of finite modification, 
If $y \notin B_{G^{\prime}}(o, 2m_0)$ and a connected subset $A$ such that $y \in A$ and $|A| \le m_0 /2$, 
then,  
\[ P_{G^{\prime}}^y (m_0 < T_{A}) = P_{G}^y (m_0 < T_{A}).  \]
Here we have identified vertices on $G^{\prime} \setminus B_{G^{\prime}}(o, 2m_0)$ and $G$. 
Hence, 
\[ E^{\Pp}_{G^{\prime}} \left[ |C_y| P_{G}^y (m_0 < T_{C_y}),  |C_y| \le m_0 /2 \right] = E^{\Pp}_{G} \left[ |C_x| P_{G}^y (m_0 < T_{C_x}),  |C_x| \le m_0 /2 \right].  \]

By this and \eqref{infty-infty-2}, 
it holds that 
\begin{equation}\label{3.5.3} 
\left| E^{\Pp}_{G^{\prime}} \left[ |C_y| P_{G}^y (m_0 < T_{C_y}) \right] - E^{\Pp}_{G} \left[ |C_x| P_{G}^x (m_0 < T_{C_x}) \right] \right| \le 2\epsilon. 
\end{equation}

By \eqref{3.5.1}, \eqref{3.5.2} and \eqref{3.5.3},  for some constant $C$, 
\[ \limsup_{n \to \infty} \frac{1}{n} \left| \widetilde E^{o,p}_{G^{\prime}} [U_n]  - \sum_{k=0}^{n-1} E^{\Pp}_{G} \left[ |C_x| P_{G}^x (k < T_{C_x}) \right]\right| 
\le C\epsilon. \]
Since $\epsilon > 0$ has been taken arbitrarily, 
\[ \limsup_{n \to \infty} \frac{1}{n} \left| \widetilde E^{o,p}_{G^{\prime}} [U_n]  - \sum_{k=0}^{n-1} E^{\Pp}_{G} \left[ |C_x| P_{G}^x (k < T_{C_x}) \right] \right| = 0.\]
Since \[ \lim_{k \to \infty} E^{\Pp}_{G} \left[ |C_x| P_{G}^x (k < T_{C_x} < +\infty) \right] = 0,\]  
we have \eqref{Lv1-conv}. 

Now we recall the following result by Br\'ezis-Lieb \cite{BL83}.

\begin{Thm}
Let $(X, \mathcal{B}, \mu)$ be a measure space and $(f_n)_{n \ge 1}, f$ be $L^p$-integrable functions on $X$ for some $p \ge 1$.  
Assume that $f_n \to f$ $\mu$-a.e. and $\| f_n \|_p \to \| f \|_p$. Then, 
$\|f_n - f\|_p \to 0$.     
\end{Thm}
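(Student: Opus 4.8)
The plan is to deduce this from Fatou's lemma together with the elementary convexity inequality, rather than establishing the full Br\'ezis--Lieb identity; the hypothesis $\|f_n\|_p \to \|f\|_p$ is precisely what makes this shortcut available, and the argument is in essence the classical Riesz--Scheff\'e one.

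First I would record the pointwise bound
\[ |f_n - f|^p \le 2^{p-1}\left(|f_n|^p + |f|^p\right), \]
which holds $\mu$-a.e. by convexity of $t \mapsto t^p$ on $[0, \infty)$. Consequently the functions
\[ g_n := 2^{p-1}\left(|f_n|^p + |f|^p\right) - |f_n - f|^p \]
are nonnegative and $\mu$-integrable (since $f_n, f \in L^p$). Because $f_n \to f$ $\mu$-a.e., we have $f_n - f \to 0$ $\mu$-a.e., and therefore $g_n \to 2^p |f|^p$ $\mu$-a.e.

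Next I would apply Fatou's lemma to the nonnegative sequence $(g_n)_n$, obtaining
\[ 2^p \|f\|_p^p = \int_X 2^p |f|^p \, d\mu \le \liminf_{n \to \infty} \int_X g_n \, d\mu = \liminf_{n \to \infty}\left( 2^{p-1}\left(\|f_n\|_p^p + \|f\|_p^p\right) - \|f_n - f\|_p^p \right). \]
By the hypothesis $\|f_n\|_p \to \|f\|_p$ and continuity of $t \mapsto t^p$, the term $2^{p-1}(\|f_n\|_p^p + \|f\|_p^p)$ converges to $2^p \|f\|_p^p$. Since this part converges, I may extract it from the $\liminf$ and rewrite the right-hand side as $2^p \|f\|_p^p - \limsup_{n \to \infty} \|f_n - f\|_p^p$. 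Comparing with the left-hand side yields $\limsup_{n \to \infty}\|f_n - f\|_p^p \le 0$, whence $\|f_n - f\|_p \to 0$, as claimed.

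The only delicate point, and the main (albeit minor) obstacle, is the manipulation of the $\liminf$: one must use that the ``good'' part $2^{p-1}(\|f_n\|_p^p + \|f\|_p^p)$ genuinely \emph{converges}, not merely stays bounded, in order to pull it out of the $\liminf$ and turn it into a $\limsup$ on the residual term, and this is exactly where the norm-convergence hypothesis enters. All integrals in play are finite because $f_n, f \in L^p$ and $\sup_n \|f_n\|_p < +\infty$, so the subtractions performed above are legitimate.
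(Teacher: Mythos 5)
Your proof is correct. Note, though, that the paper does not actually prove this statement: it quotes it as a known result of Br\'ezis--Lieb \cite{BL83} and uses it as a black box to deduce \eqref{Lv1-conv-L1} from \eqref{fm-as} and \eqref{Lv1-conv}. Your argument is therefore a self-contained substitute rather than a reproduction of the paper's reasoning, and it is the classical Riesz--Scheff\'e-type proof: apply Fatou's lemma to the nonnegative functions $g_n = 2^{p-1}\left(|f_n|^p + |f|^p\right) - |f_n - f|^p$, then use that the convergent part $2^{p-1}\left(\|f_n\|_p^p + \|f\|_p^p\right)$ can be pulled out of the $\liminf$, turning the residual into $-\limsup_n \|f_n - f\|_p^p$; all the steps you perform (the convexity bound, the a.e.\ limit of $g_n$, the finiteness of the integrals being subtracted, and the $\liminf$ manipulation) are valid. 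The trade-off compared with the cited route is this: your argument is elementary but uses the hypothesis $\|f_n\|_p \to \|f\|_p$ in an essential way, whereas the actual Br\'ezis--Lieb lemma proves the stronger ``missing term'' identity $\lim_{n \to \infty}\left(\|f_n\|_p^p - \|f_n - f\|_p^p\right) = \|f\|_p^p$ assuming only a.e.\ convergence and $\sup_n \|f_n\|_p < +\infty$, via the splitting $\left||f_n|^p - |f_n - f|^p - |f|^p\right| \le \epsilon |f_n - f|^p + C_\epsilon |f|^p$ and dominated convergence; the theorem as stated is then the special case in which the norms converge. For the purpose the statement serves in the paper, namely upgrading the a.e.\ convergence of $U_n/n$ together with convergence of its $L^1$ norm to convergence in $L^1(\widetilde P_{G^{\prime}}^{o,p})$, your shorter proof is fully sufficient.
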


\eqref{Lv1-conv-L1} follows from this, \eqref{fm-as} and  \eqref{Lv1-conv}. \\

(ii) Let $\theta = d/2 > 2$. 
Assume that $D^{\prime}$ is contained in $B_{G^{\prime}}(o, N_0)$. 

By \eqref{last-exit}, 
\[ \widetilde E^{o,p}_{G^{\prime}}[U_n] =  \sum_{k \le n} \sum_y P_{G^{\prime}}^o (S_{k} = y) E_{G^{\prime}}^{\Pp} \left[|C_y| P_{G^{\prime}}^y(T_{C_y} > n-k)\right]. \]

Hence, 
\[ \widetilde E_{G^{\prime}}^{o,p}[U_n - U_{n-1}] \]
\[= \widetilde E_{G^{\prime}}^{o,p}[|C_{S_n}|] - \sum_{y \in G^{\prime}} \sum_{k \le n-1}  P^o_{G^{\prime}}(S_k = y) E^{\Pp}_{G^{\prime}} \left[|C_y| P^y (T_{C_y} = n-k)\right].  \]

We compare $\widetilde E_{G^{\prime}}^{o,p}[U_n - U_{n-1}]$ with $E^{\Pp}_{G}\left[|C_x| P^x (T_{C_x} = +\infty)\right]$. 
Our strategy is to compare $\widetilde E_{G^{\prime}}^{o,p}[|C_{S_n}|]$ with $E^{\Pp}_{G}[|C_x| ]$ first and compare 
$\sum_{y \in G^{\prime}} \sum_{k \le n-1}  P^o_{G^{\prime}}(S_k = y) E^{\Pp}_{G^{\prime}} \left[|C_y| P^y (T_{C_y} = n-k)\right]$ with $E^{\Pp}_{G}\left[|C_x| P^x (T_{C_x} < +\infty)\right]$ second.  

We  first show that 
\begin{equation}\label{sn} 
\left| \widetilde E_{G^{\prime}}^{o,p}[|C_{S_n}|] - E^{\Pp}_{G}[|C_x| ] \right| \le O\left( n^{-(\theta -1)}  \right). 
\end{equation}  

It holds that 
\[ \widetilde E_{G^{\prime}}^{o,p}[|C_{S_n}|] - E^{\Pp}_{G}[|C_x| ] = \sum_{y \in G^{\prime}} P^o_{G^{\prime}}(S_n = y) \left(E^{\Pp}_{G^{\prime}}[|C_y|] - E^{\Pp}_{G}[|C_x| ]\right). \]

If $y \in G^{\prime} \setminus B_{G^{\prime}}(o, N_0 + k)$, then, by the assumption that $G \setminus D$ and $G^{\prime} \setminus D^{\prime}$ are isomorphic,  
\[ E^{\Pp}_{G^{\prime}}[|C_y|, |C_y| < k] = E^{\Pp}_{G}[|C_x|, |C_x| < k], \]
and, 
\[ {\Pp}^{G^{\prime}}(|C_y| \ge k) = {\Pp}^{G}(|C_x| \ge k).\]    

Hence, by using \cite{AV} again, 
\[ \sup_{y \in G^{\prime} \setminus B_{G^{\prime}}(o, N_0 + k)} \left| E^{\Pp}_{G^{\prime}}[|C_y|] - E^{\Pp}_{G}[|C_x| ] \right| \] 
\[ \le C {\Pp}^{G}(|C_x| \ge k) \le C \exp(-ck). \]

Hence, 
\[ \left| \widetilde E_{G^{\prime}}^{o,p}[|C_{S_n}|] - E^{\Pp}_{G}[|C_x| ] \right| \]
\[ \le \left(\sup_{y \in G^{\prime}}  E^{\Pp}_{G^{\prime}}[|C_y|] + E^{\Pp}_{G}[|C_x| ]\right)  P^o_{G^{\prime}}(S_n \in B_{G^{\prime}}(o, N_0 + k))  \]
\[ + \sum_{y \in  G^{\prime} \setminus B_{G^{\prime}}(o, N_0 + k)} P^o_{G^{\prime}}(S_n = y) \left| E^{\Pp}_{G^{\prime}}[|C_y|] - E^{\Pp}_{G}[|C_x| ]\right| \]
\[ \le c\left(\left|B_{G^{\prime}}(o, N_0 + k)\right| n^{-\theta} + \exp(-ck)\right). \]
If we let $k = (\log n)^2$, then, \eqref{sn} follows. 

We then compare $E^{\Pp}_{G}\left[|C_x| P^x (T_{C_x} < +\infty)\right]$ with 
\[ \sum_{k \le n-1}  P^{o}_{G^{\prime}}(S_k = y) E^{\Pp}_{G^{\prime}} \left[|C_y| P^y (T_{C_y} = n-k) \right]. \]
We will show they tend to be arbitrarily close to each other as $n \to \infty$. 

By \cite[Theorem 14.12]{W},   
the Gaussian heat kernel upper bound holds, that is,  
\[ P_{G^{\prime}}^x (S_n = y)  + P_{G^{\prime}}^x (S_{n+1} = y) \le \frac{c}{n^{d/2}}  \exp\left(- c\frac{d_{G^{\prime}}(x, y)^2}{n} \right). \]

By this and Lemma \ref{fm-cri-pb},     
it holds that for {\it any} $y \in G^{\prime}$ and $n > k \ge 1$, 
\[ \left| E^{\Pp}_{G^{\prime}} \left[|C_y| P^y (T_{C_y} = n-k) \right] - E^{\Pp}_{G} \left[|C_x| P^x (T_{C_x} = n-k) \right] \right| \le C (n-k)^{-\theta}, \]
and, 
\[  P^o_{G^{\prime}}(S_k = y) \le \frac{C}{k^{\theta}} \exp\left(- c \frac{d_{G^{\prime}}(o, y)^2}{k}\right). \]

Hence, by using the fact that 
\[ \sum_{k = 1}^{n-1} \left(\frac{1}{k(n-k)}\right)^{\theta} = O(n^{-(\theta-1)}), \]
we have that 
\[ \sum_{y \in B_{G^{\prime}}(o, N_0 + (\log n)^2)} \sum_{k \le n-1}  P^o_{G^{\prime}}(S_k = y) \]
\[ \times \left| E^{\Pp}_{G^{\prime}} [|C_y| P^y (T_{C_y} = n-k) ] - E^{\Pp}_{G} [|C_x| P^x (T_{C_x} = n-k) ] \right| \]
\[ \le C\frac{|B_{G^{\prime}}(o, N_0 + (\log n)^2)|}{n^{\theta - 1}}. \]

If $y \in G^{\prime} \setminus B_{G^{\prime}}(o, N_0 + (\log n)^2)$ and $k \ge n - d_{G^{\prime}}(o,y) + N_0$, then, 
by the exponential decay of the size of the open cluster, 
\[ \left| E^{\Pp}_{G^{\prime}} \left[|C_y| P^y (T_{C_y} = n-k) \right] - E^{\Pp}_{G} \left[|C_x| P^x (T_{C_x} = n-k) \right] \right| \le 2C \exp\left(- c(\log n)^2 \right). \]

Hence, 
\[ \sum_{y \in G^{\prime} \setminus B_{G^{\prime}}(o, N_0 + (\log n)^2)} \sum_{n - d_{G^{\prime}}(o,y) + N_0 \le k \le n-1}  P^o_{G^{\prime}}(S_k = y) \] 
\[ \ \ \times \left| E^{\Pp}_{G^{\prime}} \left[|C_y| P^y (T_{C_y} = n-k) \right] - E^{\Pp}_{G} \left[|C_x| P^x (T_{C_x} = n-k) \right] \right| \]
\[ \le C  \exp(- c(\log n)^2) \sum_{y \in G^{\prime} \setminus B_{G^{\prime}}(o, N_0 + (\log n)^2)} \sum_{n - d_{G^{\prime}}(o,y) + N_0 \le k \le n-1}  P^o_{G^{\prime}}(S_k = y)  \]
\[ \le C n \exp(- c(\log n)^2). \]

Finally we take the sum over $k$ less than $n - d_{G^{\prime}}(o,y) + N_0$. 
\[ \sum_{y \in G^{\prime} \setminus  B_{G^{\prime}}(o, N_0 + (\log n)^2)} \sum_{k \le n - d_{G^{\prime}}(o,y) + N_0}  P^o_{G^{\prime}}(S_k = y) \]
\[ \ \ \ \times \left| E^{\Pp}_{G^{\prime}} [|C_y| P^y (T_{C_y} = n-k) ] - E^{\Pp}_{G} [|C_x| P^x (T_{C_x} = n-k) ] \right| \]
\[ \le \sum_{y \in B_{G^{\prime}}(o, N_0 + n) \setminus  B_{G^{\prime}}(o, N_0 + (\log n)^2)} \sum_{1 \le k \le n - d_{G^{\prime}}(o,y) + N_0} (k(n-k))^{-\theta}  \exp\left(-c\frac{d_{G^{\prime}}(o,y)^2}{k}\right). \] 
\[  \le \sum_{y \in B_{G^{\prime}}(o, N_0 + n) \setminus B_{G^{\prime}}(o, N_0 + (\log n)^2)}  n^{-(\theta-1)}  \exp\left(-c\frac{d_{G^{\prime}}(o,y)^2}{ n - d_{G^{\prime}}(o,y) + N_0}\right). \]
\[ \le C n^{-(\theta-1)} \int_{N_0 + n}^{N_0 + (\log n)^2} t^d \exp\left(-c\frac{t^2}{ n + N_0 -t}\right) dt = C n^{-(\theta-1)}. \]

Thus it holds that 
\[ \left| E^{\Pp}_{G}\left[|C_x| P^x (T_{C_x} < +\infty)\right] - \sum_{y \in G^{\prime}} \sum_{k \le n-1}  P^o_{G^{\prime}}(S_k = y) E^{\Pp}_{G^{\prime}} \left[|C_y| P^y (T_{C_y} = n-k) \right] \right| \]
\[ = O(n^{-(\theta - 1)}).  \]
By this and \eqref{sn}, 
\[ \left| \widetilde E_{G^{\prime}}^{o,p}[U_n - U_{n-1}] - E^{\Pp}_{G}\left[|C_x| P^x (T_{C_x} = +\infty)\right] \right| \le O(n^{-(\theta-1)}). \]

Hence, 
\[  \left| \widetilde E_{G^{\prime}}^{o,p}[U_n] -  c_{G,p} n \right| = O(n^{2-\theta}). \]
Recall $\theta = d/2 > 2$. 
Now we have assertion (ii).  
\end{proof}

\begin{Rem}
(i) We do not yet know about the value of 
$\lim_{n \to \infty} \widetilde E_{G^{\prime}}^{x,p}[U_n] - \widetilde E_{G}^{x,p}[U_n]$. \\ 
(ii) In \cite{Ows}, there is an analog of Theorem \ref{fm} in a  continuous framework.  
The corresponding proof in \cite{Ows} is different from here. 
It does not use the last exit decomposition as in \eqref{last-exit}.   
\end{Rem}


We now consider fluctuation of $\{ \widetilde E^p [U_n] \}_n$.  
Let $\widetilde \Z^d = (\Z^d, E(\widetilde \Z^d))$ be the graph whose vertices and edges are $\Z^d$ and $\{\{x,y\} :  |x-y|_{\infty} = 1\}$.  
$\widetilde \Z^d$ is roughly isometric to $\Z^d$. 

\begin{Lem}
For any non-empty finite subset $A \subset \Z^3$,   
\begin{equation}\label{cap-ineq} 
\ca_{\Z^3}(A) < \ca_{\widetilde \Z^3}(A).  
\end{equation} 
\end{Lem}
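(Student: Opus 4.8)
The plan is to reformulate capacity as a Dirichlet energy problem and exploit that $\widetilde{\Z}^3$ has strictly more edges than $\Z^3$. Write $E^{\textup{d}} := E(\widetilde{\Z}^3) \setminus E(\Z^3)$ for the set of \emph{diagonal} edges, i.e. pairs $\{x,y\}$ with $|x-y|_\infty = 1$ that differ in at least two coordinates; then $E(\Z^3) \subset E(\widetilde{\Z}^3)$ and $E^{\textup{d}} \ne \emptyset$. Since the capacity to infinity is independent of the exhausting sequence (effective resistance to the complement of any exhaustion converges to the resistance to infinity), I would compute both $\ca_{\Z^3}(A)$ and $\ca_{\widetilde{\Z}^3}(A)$ along the \emph{common} exhaustion by the boxes $B_\infty(o,n)$. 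For these unweighted networks the minimizer of $\inf\{\sum_{\{x,y\}}(f(x)-f(y))^2 : f=1 \textup{ on } A,\ f=0 \textup{ on } B_\infty(o,n)^c\}$ is the harmonic function $h_n^G(x) = P^x_G(H_A < \tau_n)$, where $\tau_n$ is the first exit time from $B_\infty(o,n)$; its energy equals $R_{\textup{eff}}^G(A,B_\infty(o,n)^c)^{-1}$ and increases to $\ca_G(A)$ as $n \to \infty$.

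For every $f$ one has $\sum_{e \in E(\Z^3)}(\nabla_e f)^2 \le \sum_{e \in E(\widetilde{\Z}^3)}(\nabla_e f)^2$, so taking the infimum over admissible $f$ gives $R_{\textup{eff}}^{\Z^3}(A,B_\infty(o,n)^c)^{-1} \le R_{\textup{eff}}^{\widetilde{\Z}^3}(A,B_\infty(o,n)^c)^{-1}$ and, in the limit, the non-strict Rayleigh monotonicity $\ca_{\Z^3}(A) \le \ca_{\widetilde{\Z}^3}(A)$. The real task is strictness. For this I would feed the $\widetilde{\Z}^3$-minimizer $h_n := h_n^{\widetilde{\Z}^3}$ into the $\Z^3$-energy. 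Splitting its energy across the two edge sets,
\[ R_{\textup{eff}}^{\widetilde{\Z}^3}(A,B_\infty(o,n)^c)^{-1} = \sum_{e \in E(\Z^3)}(\nabla_e h_n)^2 + \sum_{e \in E^{\textup{d}}}(\nabla_e h_n)^2 \ge R_{\textup{eff}}^{\Z^3}(A,B_\infty(o,n)^c)^{-1} + \Delta_n, \]
where $\Delta_n := \sum_{e \in E^{\textup{d}}}(\nabla_e h_n)^2 \ge 0$ and the inequality uses that $h_n$ is admissible for the $\Z^3$-problem (it is $1$ on $A$ and $0$ on $B_\infty(o,n)^c$). It therefore suffices to show $\liminf_n \Delta_n > 0$.

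Here is the heart of the matter. As $n \to \infty$, $h_n(x) = P^x_{\widetilde{\Z}^3}(H_A < \tau_n)$ increases to $h(x) := P^x_{\widetilde{\Z}^3}(H_A < +\infty)$, the equilibrium potential, which is \emph{non-constant}: $h \equiv 1$ on the non-empty set $A$ while $h(x) \to 0$ as $|x|_\infty \to \infty$, since $\widetilde{\Z}^3$ is transient (being roughly isometric to $\Z^3$). I claim $h$ is not constant along the diagonal edges. If it were, $h$ would be constant on each connected component of the subgraph $(\Z^3, E^{\textup{d}})$; but this subgraph is connected, because every nearest-neighbour step of $\Z^3$ is a concatenation of two diagonal steps — for instance $0 \to (1,1,1) \to (1,0,0)$, with both $\{0,(1,1,1)\}$ (a space diagonal) and $\{(1,1,1),(1,0,0)\}$ (a face diagonal) in $E^{\textup{d}}$ — and the nearest-neighbour steps generate $\Z^3$. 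Hence $h$ would be globally constant, a contradiction. Fixing a single edge $e_0 = \{u,v\} \in E^{\textup{d}}$ with $h(u) \ne h(v)$, for all $n$ large enough that $u,v \in B_\infty(o,n)$ we get $\Delta_n \ge (h_n(u)-h_n(v))^2 \to (h(u)-h(v))^2 > 0$, so $\liminf_n \Delta_n \ge (h(u)-h(v))^2 > 0$. Combining with the displayed inequality and letting $n \to \infty$ yields $\ca_{\widetilde{\Z}^3}(A) \ge \ca_{\Z^3}(A) + \liminf_n \Delta_n > \ca_{\Z^3}(A)$, which is \eqref{cap-ineq}.

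The step I expect to be most delicate is the passage to the limit: identifying the finite minimizers with the hitting probabilities $P^x(H_A < \tau_n)$, verifying their monotone convergence to the equilibrium potential $h$, and ensuring that bounding $\Delta_n$ from below by the contribution of a \emph{single} fixed diagonal edge is legitimate (this is precisely why I reduce strict positivity to one edge rather than to the full, possibly degenerating, diagonal sum). The remaining combinatorial input — connectivity of $(\Z^3, E^{\textup{d}})$ — is elementary, but it is the conceptual core: it is exactly the presence of the space-diagonal edges that prevents the extra Dirichlet energy from vanishing and thereby forces the strict inequality.
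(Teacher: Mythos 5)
Your proof is correct, and although it starts from the same edge-set decomposition $E(\widetilde \Z^3) = E(\Z^3) \cup E^{\textup{d}}$ as the paper, the mechanism you use to obtain \emph{strictness} is genuinely different. The paper never examines the minimizer: it uses superadditivity of the infimum to get $\ca_{\widetilde\Z^3}(A) \ge \ca_{\Z^3}(A) + \ca_{(\Z^3, E^{\textup{d}})}(A)$, and then proves the second term is strictly positive by showing that the diagonal-edge graph $(\Z^3, E^{\textup{d}})$ is itself \emph{transient} (it is vertex-transitive and satisfies $V(d)$ for some $d>2$, so Woess's criterion applies). You instead insert the $\widetilde\Z^3$-minimizer $h_n$ into the split energy and show that the diagonal contribution cannot vanish in the limit, because the equilibrium potential $h(\cdot) = P^{\cdot}_{\widetilde\Z^3}(H_A < +\infty)$ must be non-constant across some diagonal edge; for this you only need $(\Z^3, E^{\textup{d}})$ to be \emph{connected} and spanning, plus transience of $\widetilde\Z^3$ itself. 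What each buys: the paper's route is shorter and gives the cleaner quantitative bound $\ca_{\widetilde\Z^3}(A) \ge \ca_{\Z^3}(A) + \ca_{(\Z^3,E^{\textup{d}})}(A)$, but it leans on a transience criterion for the extra-edge graph; your route is more elementary and more robust, since it would still yield strict inequality if the added edges formed a connected spanning subgraph that is recurrent (where the paper's argument collapses, as finite sets then have zero capacity in the extra-edge graph), at the cost of the limiting arguments (Dirichlet principle, $h_n \uparrow h$) that you correctly identify as the delicate steps. Two minor corrections: the conductances $R_{\textup{eff}}(A, B_\infty(o,n)^c)^{-1}$ \emph{decrease} to $\ca_G(A)$, not increase (enlarging the box enlarges the admissible class); this is harmless, as you only use existence of the limits. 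And the decay $h(x) \to 0$ does hold (via Green's function decay on a vertex-transitive graph roughly isometric to $\Z^3$), but it can be avoided: if $P^x(H_A < +\infty) = 1$ for every $x$, the strong Markov property would force the walk to visit the finite set $A$ infinitely often almost surely, contradicting transience, so $h \not\equiv 1$ without any quantitative estimate.
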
 

\begin{proof} 
Let 
\[ E({\Z^3}^{\prime}) := E(\widetilde \Z^3) \setminus E(\Z^3) = \left\{ \{x,y\} : |x-y|_{\infty} = 1 < |x-y| \right\}. \]
Then, 
${\Z^3}^{\prime} := (\Z^3, E({\Z^3}^{\prime}))$ is an infinite connected vertex-transitive graph and satisfies $V(d)$ for some $d > 2$. 
Hence, by \cite[Corollary 4.16]{W}, $(\Z^3, E({\Z^3}^{\prime}))$ is transient. 
Hence, 
\[ \inf\left\{\sum_{\{x,y\} \in E({\Z^3}^{\prime})} (f(x) - f(y))^2 : f = 1 \textup{ on $A$, $\textup{supp}(f)$ is compact} \right\} > 0.  \] 
By using this and \eqref{ca-re}, 
\[ \ca_{\widetilde \Z^3}(A) = \inf\left\{\sum_{\{x,y\} \in E(\widetilde \Z^3)} (f(x) - f(y))^2 : f = 1 \textup{ on $A$, $\textup{supp}(f)$ is compact} \right\} \]
\[ \ge \inf\left\{\sum_{\{x,y\} \in E(\Z^3)} (f(x) - f(y))^2 : f = 1 \textup{ on $A$, $\textup{supp}(f)$ is compact} \right\} \]
\[ + \inf\left\{\sum_{\{x,y\} \in E({\Z^3}^{\prime})} (f(x) - f(y))^2 : f = 1 \textup{ on $A$, $\textup{supp}(f)$ is compact} \right\} \]
\[ > \inf\left\{\sum_{\{x,y\} \in E(\Z^3)} (f(x) - f(y))^2 : f = 1 \textup{ on $A$, $\textup{supp}(f)$ is compact} \right\} = \ca_{\Z^3}(A).  \]
\end{proof}

\begin{Lem}\label{fluc-lem}
For any $p \in [0, p_T (\widetilde\Z^3))$, 
\begin{equation*}
c_{\Z^3, \ p} < c_{\widetilde\Z^3, \ p}.
\end{equation*}
\end{Lem}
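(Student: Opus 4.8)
The plan is to reduce both sides to the capacity representation \eqref{cp-cap} and then deploy the strict comparison \eqref{cap-ineq} under a monotone coupling of the two percolation configurations. First I would observe that $V(\Z^3) = V(\widetilde\Z^3) = \Z^3$ carries its usual abelian group structure, and every translation is a graph automorphism of \emph{both} $\Z^3$ and $\widetilde\Z^3$; thus the hypothesis of the lemma yielding \eqref{cp-cap} is satisfied on each graph, and we may write
\[ c_{\Z^3, p} = E^{\Pp^{\Z^3}}\left[\ca_{\Z^3}(C_o)\right], \qquad c_{\widetilde\Z^3, p} = E^{\Pp^{\widetilde\Z^3}}\left[\ca_{\widetilde\Z^3}(C_o)\right], \]
where $C_o$ is the open cluster of $o$ in the respective percolation.

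Next I would set up a coupling. Since $|x-y| = 1$ forces $|x-y|_\infty = 1$, we have $E(\Z^3) \subset E(\widetilde\Z^3)$, so I would generate both percolations from a single family of independent Bernoulli$(p)$ variables indexed by $E(\widetilde\Z^3)$: the restriction to $E(\Z^3)$ gives the $\Z^3$-percolation and the full family gives the $\widetilde\Z^3$-percolation. Writing $C_o^{\Z^3}$ and $C_o^{\widetilde\Z^3}$ for the two clusters of $o$, every $\Z^3$-open path is $\widetilde\Z^3$-open, hence $C_o^{\Z^3} \subset C_o^{\widetilde\Z^3}$ pathwise. Since $E(\Z^3)\subset E(\widetilde\Z^3)$ also gives $p_T(\widetilde\Z^3)\le p_T(\Z^3)$, the assumption $p < p_T(\widetilde\Z^3)$ and the exponential decay of cluster sizes in the subcritical regime (via \cite{AV}, as used in Lemma \ref{fm-cri-pb}) guarantee that both clusters are finite almost surely.

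I would then chain the two monotonicity facts pathwise. By \eqref{mon-cap} applied to $\ca_{\Z^3}$ and the inclusion $C_o^{\Z^3} \subset C_o^{\widetilde\Z^3}$,
\[ \ca_{\Z^3}(C_o^{\Z^3}) \le \ca_{\Z^3}(C_o^{\widetilde\Z^3}), \]
while \eqref{cap-ineq}, applied to the set $C_o^{\widetilde\Z^3}$ (which is nonempty because $o \in C_o^{\widetilde\Z^3}$, and finite on an event of full measure), gives the \emph{strict} inequality
\[ \ca_{\Z^3}(C_o^{\widetilde\Z^3}) < \ca_{\widetilde\Z^3}(C_o^{\widetilde\Z^3}). \]
Combining the two displays, $\ca_{\Z^3}(C_o^{\Z^3}) < \ca_{\widetilde\Z^3}(C_o^{\widetilde\Z^3})$ almost surely.

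Finally I would pass to expectations. Using $\ca(A) \le |A|$ (immediate from \eqref{cap-cap}) and $C_o^{\Z^3}\subset C_o^{\widetilde\Z^3}$, both random capacities are dominated by $|C_o^{\widetilde\Z^3}|$, which is integrable since $p < p_T(\widetilde\Z^3)$; hence the difference $\ca_{\widetilde\Z^3}(C_o^{\widetilde\Z^3}) - \ca_{\Z^3}(C_o^{\Z^3})$ is a nonnegative integrable random variable that is strictly positive almost surely, so its expectation is strictly positive. This yields $c_{\Z^3, p} < c_{\widetilde\Z^3, p}$. The step I expect to require the most care is exactly this last transfer: one must confirm that the gap in \eqref{cap-ineq} is genuinely strict on an event of full probability (not merely nonnegative) and that the relevant capacities are integrable, so that strictness is preserved under $E$ rather than being washed out.
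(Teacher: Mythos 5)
Your proposal is correct and follows essentially the same route as the paper: reduce both constants to the capacity representation \eqref{cp-cap}, couple the two percolations via $E(\Z^3)\subset E(\widetilde\Z^3)$, and combine the monotonicity \eqref{mon-cap} with the strict comparison \eqref{cap-ineq}. The only (immaterial) difference is the order of the chain — the paper applies \eqref{cap-ineq} to the $\Z^3$-cluster and then monotonicity of $\ca_{\widetilde\Z^3}$ under the coupling, whereas you apply monotonicity of $\ca_{\Z^3}$ first and \eqref{cap-ineq} to the larger $\widetilde\Z^3$-cluster — and your extra care about integrability and preservation of strictness under expectation is sound.
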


\begin{proof} 
$\widetilde \Z^d$ also has a structure of a Cayley graph of $\Z^d$ with a generating set different from the nearest-neighbor $\Z^d$.   
By \eqref{cp-cap}, 
it suffices to show that 
\[ E^{\Pp}_{\Z^3} \left[\ca(C_0)\right]  < E^{\Pp}_{\widetilde\Z^3} \left[\ca(C_0)\right].  \]
We regard Bernoulli bond percolation on $\Z^3$ as Bernoulli bond percolation on $\widetilde\Z^3$ such that all of edges $\{\{x,y\} : |x-y|_{\infty} = 1 < |x-y| \}$ declared to be closed.    
Then, by \eqref{cap-ineq},   
\[ E^{\Pp}_{\Z^3} \left[\ca_{\Z^3}\left(C_0 \right)\right] < E^{\Pp}_{\Z^3} \left[\ca_{\widetilde\Z_3}\left(C_0 \right)\right]  \le E^{\Pp}_{\widetilde\Z^3} \left[\ca_{\widetilde\Z^3}\left(C_{0}\right)\right]. \]
\end{proof}

\begin{proof}[Proof of Theorem \ref{fluc}] 
As an outline level, we follow the proof of \cite[Theorem 1.3]{O14}, but here we need to deal with unboundedness of $\cup_{i \le n} C_{S_i}$. 

Let $p < p_T (\widetilde\Z^3)$.   
Let $G_1 := \Z^3$.
For a strictly increasing sequence of natural numbers $(M_k)_k$, 
let $G_{k+1} := G(M_1, \cdots, M_k)$ be the graph such that $M_i \le |x|_{\infty} \le M_{i+1}$ has the structure of $\widetilde \Z^3$ if $i < k$ is odd, and, 
has the structure of $\Z^3$ if $i < k$ is even, and, $M_k \le |x|_{\infty}$ has the structure of $\widetilde \Z^3$ if $k$ is odd, and, 
has the structure of $\Z^3$ if $k$ is even. 
Here $M_0 := 0$.   
Let $G_{\infty}$ be the graph such that $M_i \le |x|_{\infty} \le M_{i+1}$ has the structure of $\widetilde \Z^3$ if $i$ is odd, and, 
has the structure of $\Z^3$ if $i$ is even. 
The set of vertices of $G_{\infty}$ is $\Z^3$, and, it is a subgraph of $\widetilde \Z^3$.  
All $G_{k}, k \le +\infty$, are roughly isometric to $\Z^3$.   

We now specify $(M_k)_k$. 
We define a strictly increasing sequence $(n_k)_k$. 
Let $n_0 := 1$. 
Let $n_k > \exp(n_{k-1})$ such that 
\begin{equation}\label{basic-conv-1} 
\left| \frac{\widetilde E^{0,p}_{G_k} \left[U_{n_k}\right]}{n_k} - c \right| \le \exp(-n_{k-1}).  
\end{equation} 
In the above, $c = c_{\Z^3, p}$ if $k$ is odd, and, 
$c = c_{\widetilde\Z^3, p}$ if $k$ is even. 
We assume that $M_{k} > n_k + \exp(n_k)$ for each $k$. 

If  $k$ is sufficiently large, then, by the exponential decay of the size of the cluster, 
\[ {\Pp}^{G_k}\left( \left(\bigcap_{x \in B_{\infty} (0, n_k)} \left\{ |C_x| \le \exp(n_k) \right\}\right)^c \right) \le \exp(- n_k).\]    
Let $i > k$. 
Since  the event $\cap_{x \in B_{G_k, \infty} (0, n_k)} \left\{ |C_x| \le \exp(n_k) \right\}$ is determined only by the state of edges in $B_{G_i, \infty} (0, n_k) (= B_{G_k, \infty} (0, n_k))$, 
we have that 
\[ E^{\Pp}_{G_k}\left[U_{n_k},  \bigcap_{x \in B_{G_k, \infty} (0, n_k)} \left\{ |C_x| \le \exp(n_k) \right\}\right] 
=  E^{\Pp}_{G_i}\left[U_{n_k}, \bigcap_{x \in B_{G_i, \infty} (0, n_k)} \left\{ |C_x| \le \exp(n_k) \right\}\right]. \]

Hence, if $i \ge k$, then, 
\begin{align*} 
\left|  \widetilde E^{0,p}_{G_{i}}[U_{n_k}] - \widetilde E^{0,p}_{G_{i+1}}[U_{n_k}] \right| 
&= \Big| \widetilde E^{0,p}_{G_{i}}\left[U_{n_k}, \ \cup_{x \in B_{\infty} (0, n_k)} \left\{ |C_x| > \exp(n_k) \right\}\right] \\
& \ - \widetilde E^{0,p}_{G_{i+1}}\left[U_{n_k}, \ \cup_{x \in B_{\infty} (0, n_k)} \left\{ |C_x| > \exp(n_k) \right\}\right] \Big|  \\
&\le n_k \exp(-n_{i}).  
\end{align*} 

It holds that for each fixed $k$,
\[ \lim_{i \to \infty} \widetilde E^{0,p}_{G_{i}}[U_{n_k}] = \widetilde E^{0,p}_{G_{\infty}}[U_{n_k}]. \]
Hence, 
\[ \left| \widetilde E^{0,p}_{G_{k}}[U_{n_k}] - \widetilde E^{0,p}_{G_{\infty}}[U_{n_k}] \right| \le n_k \sum_{i \ge k} \exp(-n_{i}).  \]
By this and \eqref{basic-conv-1}, 
\begin{equation*}
\left| \frac{\widetilde E^{0,p}_{G_{\infty}}[U_{n_k}]}{n_k} - c \right| \le \sum_{i \ge k-1} \exp(-n_{i}).  
\end{equation*} 
In the above, $c = c_{\Z^3, p}$ if $k$ is odd, and, 
$c = c_{\widetilde\Z^3, p}$ if $k$ is even. 
Thus 
\begin{equation}\label{fluc-fluc-pre}
\liminf_{n \to \infty} \frac{\widetilde E^{o,p} [U_{n}]}{n} < \limsup_{n \to \infty} \frac{\widetilde E^{o,p} [U_{n}]}{n}.  
\end{equation}  
holds for $G = G_{\infty}$, $x = 0$ and $p < p_T (\widetilde \Z^3)$. 

We then replace $p_T (\widetilde \Z^3)$ above with $p_T (G)$.    
We will show that for a $(M_k)_k$ suitably chosen,  $p_{T}(G_{\infty}) = p_T (\widetilde \Z^3)$.     
Since $G_{\infty}$ is a subgraph of $\widetilde \Z^3$, $p_{T}(G_{\infty}) \ge p_T (\widetilde \Z^3)$.  
Now it suffices to show 
\begin{equation}\label{pT-ineq}
p_{T}(G_{\infty}) \le p_T (\widetilde \Z^3). 
\end{equation} 

Let $p > p_T (\widetilde \Z^3)$.  
Since, it holds that $p_T (\widetilde \Z^3) = p_H (\widetilde \Z^3)$ by \cite{AV},    
it holds that for each $k$, 
there exists a vertex $x_k$ such that 
$|x|_{\infty} = M_{2k-1} + 1$, and furthermore,  
with positive probability under ${\Pp}^{G_{2k}}$, 
there exists an infinite path which does not hit any vertex of $B_{\infty}(0, M_{2k-1})$. 

Denote by $G_{2k} \setminus B_{\infty}(0, M_{2k-1})$ the graph obtained by delating all edges of $B_{\infty}(0, M_{2k-1})$ from $G_{2k}$.   
It is an infinite connected simple graph.  
It holds that 
\[ E^{\Pp}_{G_{2k} \setminus B_{\infty}(0, M_{2k-1})} \left[  |C_{x_k}| \right] = +\infty. \] 
Hence, if $M_{2k}$ is sufficiently large, 
then, it holds that   
\[  E^{\Pp}_{G_{2k} \cap B_{\infty}(0, M_{2k})} \left[  |C_{x_k}| \right] \ge E^{\Pp}_{G_{2k} \cap B_{\infty}(0, M_{2k}) \setminus B_{\infty}(0, M_{2k-1})} \left[  |C_{x_k}| \right] \]
\[\ge \sum_{l = M_{2k-1}}^{M_{2k}-M_{2k-1}}  {\Pp}^{G_{2k} \setminus B_{\infty}(0, M_{2k-1})} \left( |C_{x_k}|  > l \right) \ge  p^{- 2d (1 + M_{2k-1})}. \]
By repeating this argument, 
and by noting 
\[ G_{2k} \cap B_{\infty}(0, M_{2k}) = G_{i} \cap B_{\infty}(0, M_{2k}), \ i \ge 2k,\]   
\[  E^{\Pp}_{G_{\infty}} \left[  |C_{x_k}| \right]  = +\infty.  \]
Hence $p_{T}(G_{\infty}) \le p$ and hence \eqref{pT-ineq} holds. 
Thus \eqref{fluc-fluc-pre} holds for $G = G_{\infty}$, $x = 0$ and $p < p_T (G)$.

Finally, we show the almost sure equalities of \eqref{fluc-fluc}.   
By \eqref{fm-as}, 
for each $k$, there exists $l_k$ such that 
\[ \widetilde P^{0,p}_{G_k} \left(\left| \frac{U_{l_k}}{l_k} - \lim_{n \to \infty} \frac{\widetilde E^{0,p}_{G_k} \left[U_{n}\right]}{n} \right| \ge \frac{1}{2^k} \right) \le \frac{1}{4^k}.  \]
If we take a sufficiently large $M_k > l_k$ for each $k$, then,   
\[ {\Pp}^{G_k} \left( B_{\infty}(0, l_k) \leftrightarrow B_{\infty} (0, M_k)^c \right) \le \frac{1}{4^k}. \]
It holds that 
\[ \widetilde P^{0,p}_{G_{\infty}} \left(\left| \frac{U_{l_{2k}}}{l_{2k}} -  c_{\widetilde\Z^3,  p}\right|  > \frac{1}{2^k} \right) \le \frac{1}{4^k},  \]
and, 
\[ \widetilde P^{0,p}_{G_{\infty}} \left(\left| \frac{U_{l_{2k+1}}}{l_{2k+1}} - c_{\Z^3,  p} \right| > \frac{1}{2^{k+1}} \right) \le \frac{1}{4^{k+1}}.  \]
Hence, 
\[ \lim_{k \to \infty} \frac{U_{l_{2k}}}{l_{2k}} =  c_{\widetilde\Z^3,  p} >  c_{\Z^3,  p} = \lim_{k \to \infty} \frac{U_{l_{2k+1}}}{l_{2k_1}}, \ \textup{$\widetilde P^{0,p}_{G_{\infty}}$-a.s.}  \]
Thus, the proof of \eqref{fluc-fluc} is completed.  
\end{proof}

\begin{Rem}
As in the proof of \cite[Theorem 1.3]{O14},  we can replace $\Z^3$ and $\widetilde \Z^3$ with the  regular trees of degrees $3$ and $4$, respectively. 
\end{Rem} 


\section{Two-dimensional lattice}

This section is devoted to consider the case that $G = \Z^2$.  

\begin{proof}[Proof of Theorem \ref{lowdim-dis} (a) - (c)] 
(a) \cite[Lemma 3.1]{JP70} and \cite[(2.15)]{DE} imply that 
\begin{equation*}
E^{P^0}[R_n] = \frac{n}{\log n}\pi + O\left(\frac{n}{(\log n)^2}\right). 
\end{equation*} 
By \eqref{Okada24} and \eqref{boundary}, 
\begin{equation*}
\widetilde E^{0, p}\left[U_n - R_n\right] = O\left(\frac{n}{(\log n)^2}\right). 
\end{equation*} 
Now \eqref{asy-2d} follows from these estimates.  

(b)  
\begin{equation}\label{Var-ineq} 
\va_{x,p}(U_n) \le 2\left(\va_x (R_n)  + \va_{x,p}(U_n - R_n)\right). 
\end{equation}  
By \eqref{bd-always}, the Cauchy-Schwarz inequality, and \eqref{L-2nd}, 
\begin{equation}\label{var-difference}
\va_{p}(U_n - R_n) \le E^{P^0}[L_n^2] E^{\Pp}\left[|C_0|^2\right] = O\left(\frac{n^2}{(\log n)^4}\right). 
\end{equation} 
By \cite[Theorem 4.2]{JP72}, 
\begin{equation}\label{JP72-var}
\va(R_n) = O\left(\frac{n^2}{(\log n)^4}\right). 
\end{equation}
\eqref{Var-2d} follows from \eqref{Var-ineq} \eqref{var-difference}, and \eqref{JP72-var}.     

(c) By applying an interpolation argument for $U_n$ as in the proof of \cite[Theorem 3.1]{JP72}, 
the almost sure convergence of \eqref{lim-2-dis-as} follows from \eqref{Var-2d} and \eqref{asy-2d}.  

Now we show the $L^q$-convergence for $1 \le q < +\infty$. 
We can assume that $q$ is an integer without loss of generality.     
If we show that for {\it any} $q$, 
\begin{equation}\label{Lq-Un}
\lim_{n \to \infty} \left( \frac{\log n}{n} \right)^q \widetilde E^p [U_n^q] = \pi^q, 
\end{equation}
then, 
$\{((\log n) U_n / n)^q\}_n$ are uniformly integrable for {\it any} $q$.  
Now for each $q$, 
the $L^q$-convergence follows from this, the $\widetilde P^p$-a.s. convergence of $(\log n) U_n / n$ to $\pi$, and the fact that $U_n \ge 0$. 

The rest of this proof are devoted to show \eqref{Lq-Un}. 
First we show the following:  
\begin{Lem}\label{2d-range-moment}
We have that\footnote{The corresponding result for the volume of the Wiener sausage follows from \cite[Corollarie 2-2]{Le86AOP}.}
\begin{equation}\label{Lq-Rn}
\lim_{n \to \infty} \left( \frac{\log n}{n} \right)^q E^{P^0} \left[R_n^q \right] = \pi^q. 
\end{equation}
\end{Lem}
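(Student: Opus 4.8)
The plan is to deduce \eqref{Lq-Rn} from the almost sure convergence $(\log n/n) R_n \to \pi$, which is \cite[Theorem 4]{DE}, together with a uniform moment bound that yields uniform integrability. Writing $r_n := (\log n/n) R_n$, we have $r_n \to \pi$ $P^0$-a.s.; hence if we show that for every $q$ the family $\{r_n^q\}_n$ is uniformly integrable, then $r_n^q \to \pi^q$ in $L^1(P^0)$ and \eqref{Lq-Rn} follows. Since $x \mapsto x^q$ is convex on $[0,\infty)$, Jensen's inequality already gives $E^{P^0}[R_n^q] \ge (E^{P^0}[R_n])^q \sim (\pi n/\log n)^q$, so only the matching upper bound is at stake, and it is precisely this that uniform integrability supplies. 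Uniform integrability of $\{r_n^q\}_n$ in turn follows once we establish the order bound
\[ E^{P^0}[R_n^q] = O\left( \left(\frac{n}{\log n}\right)^q \right), \qquad n \ge 2, \]
for every $q \ge 1$ (applied with $q$ replaced by $q+1$).

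To prove this order bound I would use a block decomposition dominating $R_n$ by a sum of independent copies of a shorter range. Fix $m = \lfloor n^{1/2} \rfloor$ and $k = \lceil n/m \rceil$, and split the time interval into blocks $\{im, \dots, (i+1)m\}$, $0 \le i \le k-1$. Writing $\rho_i := |\{S_{im}, \dots, S_{(i+1)m}\}|$, monotonicity of the range gives $R_n \le R_{km} \le \sum_{i=0}^{k-1} \rho_i$. The key structural point is that, by translation invariance of the cardinality, each $\rho_i$ is a function of the increments indexed by $im+1, \dots, (i+1)m$ alone; since these blocks of increments are disjoint and the increments are i.i.d., the variables $\rho_0, \dots, \rho_{k-1}$ are \emph{independent}, each with the law of $R_m$ under $P^0$.

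I would then apply Rosenthal's inequality for sums of independent random variables to this i.i.d.\ sum. With $\mu := E^{P^0}[R_m]$, for $q \ge 2$ this gives
\[ E^{P^0}[R_n^q] \le C_q \left( (k\mu)^q + k\, E^{P^0}[R_m^q] + \left(k\, \va(R_m)\right)^{q/2} \right). \]
Using $E^{P^0}[R_m] \sim \pi m/\log m$ one checks $(k\mu)^q \sim (2\pi n/\log n)^q = O((n/\log n)^q)$, which supplies the leading order. For the remaining two terms I would use the crude bound $R_m \le m+1$ together with $\va(R_m) = O(m^2/(\log m)^4) = O(n/(\log n)^4)$ from \eqref{JP72-var}: with $m \approx n^{1/2}$ and $k \approx n^{1/2}$ these contribute $O(n^{(q+1)/2})$ and $O(n^{3q/4}/(\log n)^{2q})$ respectively, whose $n$-exponents are strictly below $q$ for $q \ge 2$, hence both are $o((n/\log n)^q)$. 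This yields the order bound, and the case $q=1$ is just the known first-moment asymptotic recalled in part (a).

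The main obstacle is the order bound itself; the reduction to it (Jensen for the lower bound, uniform integrability plus the a.s.\ law of large numbers for the upper bound) is soft. The one point requiring care is the exact independence of the block ranges $\rho_i$: it is essential that $\rho_i$ depends on the increments of its own block alone, so that the sub-walks decouple and Rosenthal's inequality is directly applicable. I emphasize that only the \emph{order} $O((n/\log n)^q)$ is needed from this estimate; the sharp constant $\pi^q$ is then recovered for free from the convergence $r_n \to \pi$.
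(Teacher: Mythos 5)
Your proof is correct, but it takes a genuinely different route from the paper's for the upper bound. The paper obtains the lower bound softly as you do (Fatou plus the a.s. law \eqref{DE4}, where you use Jensen), but for the upper bound it expands $E^{P^0}[R_n^q]=\sum_{x_1,\dots,x_q}P^0\bigl(\bigcap_i\{H_{x_i}\le n\}\bigr)$, reduces to distinct points, orders them by their hitting times, and factorizes via the Markov property to bound the whole sum by $E^{P^0}[R_n]^q$; the sharp constant $\pi^q$ then comes directly from $E^{P^0}[R_n]\sim \pi n/\log n$ (\cite[Theorem 1]{DE}), with no uniform-integrability step and no input beyond first moments. You instead prove only the order bound $E^{P^0}[R_n^q]=O\left((n/\log n)^q\right)$ --- via the block decomposition into i.i.d.\ copies of $R_m$ (your independence claim is right: each block range is a function of the increments of its own block) and Rosenthal's inequality --- and recover the constant from \eqref{DE4} through uniform integrability. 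This is sound: your choice $m\approx n^{1/2}$ loses a factor $2^q$ in $(k\mu)^q$ because $\log m=\tfrac12\log n$, which is exactly why the reduction to an order bound plus UI is needed, and you correctly flag that. Two minor remarks: the Jain--Pruitt variance bound \eqref{JP72-var} is not actually needed, since the crude bound $\va(R_m)\le (m+1)^2$ already gives $(k\,\va(R_m))^{q/2}=O(n^{3q/4})=o\left((n/\log n)^q\right)$; and the form of Rosenthal's inequality for nonnegative summands, $E\left[\left(\sum_i \rho_i\right)^q\right]\le C_q\left((k\mu)^q + k\,E^{P^0}[R_m^q]\right)$, would shorten the estimate further. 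What your approach buys is robustness --- the block/UI scheme transfers to other increasing functionals of the walk; what the paper's approach buys is a self-contained computation of the sharp constant that needs nothing beyond the first-moment asymptotics and the Markov property.
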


\begin{proof}[Proof of Lemma \ref{2d-range-moment}]
By \cite[Theorem 4]{DE}, 
\begin{equation}\label{DE4} 
\lim_{n \to \infty} \frac{\log n}{n} R_n = \pi, \textup{ $P^0$-a.s.} 
\end{equation} 
By this and Fatou's lemma,
\[ \liminf_{n \to \infty} \left( \frac{\log n}{n} \right)^q E^{P^0} \left[R_n^q \right] \ge \pi^q. \] 
Hence, it suffices to show that
\[ \limsup_{n \to \infty} \left( \frac{\log n}{n} \right)^q E^{P^0} \left[R_n^q \right] \le \pi^q. \] 

Since 
\[ E^{P^0} \left[R_n^q \right] = \sum_{x_1, \dots, x_q \in [-n,n]^2} P^0 \left( \bigcap_{1 \le i \le q } \{H_{x_i} \le n\} \right),  \] 
it suffices to show that  
\begin{equation}\label{limsup-Rnq-2}
\limsup_{n \to \infty} \left( \frac{\log n}{n} \right)^q  \sum_{r = 1}^{q} \sum_{x_1, \cdots, x_r \in [-n,n]^2; \ \textup{distinct}} P^0 \left(\bigcap_{1 \le i \le r } \{H_{x_i} \le n\} \right) \le \pi^q.
\end{equation}

Let $1 \le r \le q$. 
Let $\Pi_r$ be the permutation group on $\{1,2, \dots, r\}$. 
By the Markov property and the translation invariance, 
if $x_1, \cdots, x_r$ are distinctive, 
\[ P^0 \left(\bigcap_{1 \le i \le r } \{H_{x_i} \le n\} \right) = \sum_{k_1, \dots k_r \in [0,n]; \ \textup{distinct}} P^0 \left(\bigcap_{1 \le i \le  r} \{H_{x_i} = k_i \} \right) \]
\[ = \sum_{\sigma \in \Pi_r} \sum_{1 \le k_{\sigma(1)} < \cdots < k_{\sigma(r)} \le n} P^0 \left(\bigcap_{1 \le i \le r} \{H_{x_i} = k_i \} \right). \]

Let $\sigma(0) = 0$ and $k_0 = 0$. 
By the Markov property, 
\[ \sum_{\sigma \in \Pi_r} \sum_{1 \le k_{\sigma(1)} < \cdots < k_{\sigma(r)} \le n} P^0 \left(\bigcap_{1 \le i \le r} \{H_{x_i} = k_i \} \right)\]
\[ \le \sum_{\sigma \in \Pi_r} \sum_{1 \le k_{\sigma(1)} < \cdots < k_{\sigma(r)} \le n} \prod_{i=1}^{q} P^0 \left(H_{x_{\sigma(i)} - x_{\sigma(i-1)}} = k_{\sigma(i)} - k_{\sigma(i-1)} \right) \]

Hence, 
\[  \sum_{x_1, \cdots, x_r \in [-n, n]^2; \ \textup{distinct}} P^0 \left(\bigcap_{1 \le i \le r} \{ H_{x_i} \le n \}\right) \]
\[ \le \sum_{\sigma \in \Pi_r} \sum_{1 \le k_{\sigma(1)} < \cdots < k_{\sigma(r)} \le n}  \sum_{x_1, \cdots, x_r \in [-n, n]^2} \prod_{i=1}^{q} P^0 \left(H_{x_{\sigma(i)} - x_{\sigma(i-1)}} = k_{\sigma(i)} - k_{\sigma(i-1)} \right) \]
\[ \le  \sum_{\sigma \in \Pi_r} \sum_{1 \le k_{\sigma(1)} < \cdots < k_{\sigma(r)} \le n} \prod_{i=1}^{q} \sum_{x \in \Z^2 \setminus \{0\}} P^0 \left(H_x = k_{\sigma(i)} - k_{\sigma(i-1)} \right)\]
\[ = \sum_{k_1, \dots k_r \in [1,n]; k_1 + \cdots + k_r \le n} \prod_{i=1}^{r} \sum_{x \in \Z^2  \setminus \{0\}} P^0 \left(H_x = k_{i}\right)  \] 

Let 
\[ f(k) = \sum_{x \in \Z^2  \setminus \{0\}} P^0 \left(H_x = k\right) \]
and
\[ g(k) = \sum_{x \in \Z^2} P^0 \left(H_x = k\right). \]
If $k \ge 1$, then, $f(k) = g(k)$. 
We have that $f(0) = 0$ and $g(0) = 1$. 
By taking sum over $r$,
\[ \sum_{r = 1}^{q} \sum_{k_1, \dots k_r \in [1,n]; k_1 + \cdots + k_r \le n} \prod_{i=1}^{r} f(k_i) \le \sum_{k_1, \dots k_q \in [0,n]; k_1 + \cdots + k_q \le n} \prod_{i=1}^{q} g(k_i) \]
\[\le  \left(\sum_{x \in \Z^2} P^0 \left(H_x \le n\right) \right)^q = E^{P^0}[R_n]^q. \] 
By \cite[Theorem 1]{DE}, 
\[ \lim_{n \to \infty} \frac{\log n}{n} E^{P^0}[R_n] = \pi.\]  
Now \eqref{limsup-Rnq-2} holds. 
\end{proof}

Now we return to the proof of part (c) of Theorem \ref{lowdim-dis}. 
By Lemma \ref{2d-range-moment}, 
in order to show \eqref{Lq-Un}, 
it suffices to show that 
\begin{equation}\label{Lq-UnRn}
\lim_{n \to \infty} \left( \frac{\log n}{n} \right)^q \widetilde E^{0,p} \left[(U_n -R_n)^{q} \right] = 0.    
\end{equation}

By \eqref{bd-always} and the H\"older inequality, 
\begin{align}\label{Holder} 
\widetilde E^{0,p} \left[(U_n -R_n)^{q} \right] \\
&\le E^{P^0} \left[ E^{\Pp} \left[\left(\sum_{x \in \partial R_n} |C_x| \right)^{q} \right]\right] \\
&\le E^{P^0}\left[L_n^q\right] E^{\Pp}\left[|C_0|^q\right] \notag\\
&\le E^{\Pp}\left[|C_0|^q\right] E^{P^0}\left[ \left(\frac{L_n}{R_n}\right)^{2q}\right]^{1/2} E^{P^0}[R_n^{2q}]^{1/2}. 
\end{align} 

By \eqref{Okad-as} and the dominated convergence theorem, 
\[ \lim_{n \to \infty}  E^{P^0}\left[ \left(\frac{L_n}{R_n}\right)^{2q}\right] = 0. \]

Now \eqref{Lq-UnRn} follows from this, \eqref{Lq-Rn} and \eqref{Holder}.   
\end{proof}

\begin{Rem}
We can give an alternative proof of the a.s. convergence part of part (c) of Theorem \ref{lowdim-dis} as follows. 
Since $\partial R_n$ surrounds the origin and  $L_n = O\left(n/(\log n)^2\right)$, 
by using the isoperimetric inequality for subsets of $\Z^2$,   
there is a path from the origin to  $\partial R_n$ whose length is of order $O\left(n^{1/2}/ \log n \right)$. 
Hence we can apply  Fontes-Newman \cite[Theorem 4]{FN}\footnote{\cite[Theorem 4]{FN} is stated for site percolation, 
but, as in the proof of  Grimmett-Piza \cite[Lemma 6]{GP}, it holds for the case of bond percolation.},    
\begin{equation*}
\frac{1}{L_n} \sum_{x \in \partial R_n}  |C_x| < +\infty,  \ \textup{ $\Pp$-a.s.}
\end{equation*}  
Therefore, 
by using this, \eqref{bd-always}  and \eqref{DE4}, 
\[ \lim_{n \to \infty} \frac{U_n}{R_n} = 1,  \ \textup{ $\Pp$-a.s.}\]
Hence we have the a.s. convergence part of \eqref{lim-2-dis-as}. 
\end{Rem}

\begin{Prop}[Volume of intersections]\label{inter}  
Let $S^1$ and $S^2$ be two independent simple random walks on $\Z^2$ starting at $0$. 
Let \[ I_n := \left| \left(\bigcup_{i \in [0,n]} C_{S^1_i}\right)  \cap \left(\bigcup_{i \in [0,n]} C_{S^2_i}\right) \right|.  \]
Let 
\[ f_p (n) := E^{P^{0,0} \otimes \Pp}\left[ I_n \right], \ 0 \le p < p_T (\Z^2).  \]
Then, 
\begin{equation}\label{inter-conv} 
\lim_{n \to \infty} \frac{(\log n)^2}{n} (f_p (n) - f_0 (n)) = 0. 
\end{equation} 
\end{Prop}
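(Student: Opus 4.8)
The plan is to bound the nonnegative quantity $f_p(n)-f_0(n)$ from above by $o(n/(\log n)^2)$, working through the representation of $I_n$ by cluster hitting probabilities. First I would observe that, for $z\in\Z^2$, the point $z$ lies in $\bigcup_{i\le n}C_{S^j_i}$ exactly when the $j$-th walk hits the cluster $C_z$ before time $n$; since conditioning on the percolation configuration makes $S^1,S^2$ independent and both start at $0$,
\[ f_p(n)=\sum_{z\in\Z^2}E^{\Pp}\big[P^0(H_{C_z}\le n)^2\big],\qquad f_0(n)=\sum_{z\in\Z^2}P^0(H_z\le n)^2. \]
Writing $b_z:=P^0(H_{C_z}\le n)$ and $\delta_z:=b_z-P^0(H_z\le n)=P^0(H_{C_z}\le n,\,H_z>n)\ge 0$ (using $z\in C_z$), the elementary bound $b_z^2-P^0(H_z\le n)^2\le 2\delta_z b_z$ reduces the whole statement to showing $\sum_{z}E^{\Pp}[\delta_z b_z]=o(n/(\log n)^2)$.

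The crux is that $\delta_z$ carries an extra factor $1/\log n$ relative to $b_z$: hitting the small connected cluster $C_z$ while avoiding its distinguished point $z$ is a $(\log n)^{-1}$-rare event for the recurrent planar walk, and the naive union bound $\delta_z\le\sum_{w\in C_z\setminus\{z\}}P^0(H_w\le n)$ throws this gain away (it only yields $O(n/(\log n)^2)$). To capture the gain I would run the walk $m$ further steps: by the strong Markov property,
\[ P^0(H_z\le n+m)\ge P^0(H_{C_z}\le n)\,\min_{w\in C_z}P^w(H_z\le m)\ge b_z\big(1-\rho_{C_z}(m)\big), \]
where $\rho_A(m):=\max_{w\in A}P^w(H_z>m)$. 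The two-dimensional hitting estimate of Kesten--Spitzer type already invoked for Lemma \ref{lem-okada} gives $\rho_A(m)\le C(1+\log(1+\textup{diam}(A)))/\log m$. Rearranging, whenever $\rho_{C_z}(m)\le 1/2$, I obtain the two pointwise bounds $\delta_z\le 2\big(P^0(H_z\le n+m)-P^0(H_z\le n)\big)+2\rho_{C_z}(m)P^0(H_z\le n)$ and $b_z\le 2P^0(H_z\le n+m)$.

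I would then take $m=\lfloor n^{1/2}\rfloor$ and split the sum by $\textup{diam}(C_z)$. On $\{\textup{diam}(C_z)\le(\log n)^2\}$, where $\rho_{C_z}(m)\le 1/2$ for large $n$, the first term sums (using $b_z\le 1$) to at most $2\big(E^{P^0}[R_{n+m}]-E^{P^0}[R_n]\big)\le Cm/\log n=o(n/(\log n)^2)$, since $E^{P^0}[R_k-R_{k-1}]=P^0(T_0>k)\le C/\log n$ for $k>n$; the second term sums to at most
\[ \frac{C}{\log m}\,E^{\Pp}\!\big[1+\log(1+\textup{diam}(C_0))\big]\sum_{z}P^0(H_z\le 2n)^2=O\!\Big(\tfrac{1}{\log n}\cdot\tfrac{n}{(\log n)^2}\Big), \]
because $\sum_z P^0(H_z\le 2n)^2=f_0(2n)=O(n/(\log n)^2)$ is the classical bound for the expected number of intersections of two planar walks and $E^{\Pp}[\log(1+\textup{diam}(C_0))]<\infty$ by subcritical exponential decay. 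The residual event $\{\textup{diam}(C_z)>(\log n)^2\}$ is negligible: using $b_z\le 1$, the estimate $\Pp(\textup{diam}(C_0)>(\log n)^2)\le Ce^{-c(\log n)^2}$, and the exponential smallness of $b_z$ for $|z|>2n$, its contribution is even $o(1)$. Summing the three pieces gives $\sum_z E^{\Pp}[\delta_z b_z]=o(n/(\log n)^2)$, hence the claim. The main obstacle is exactly the middle paragraph: extracting the extra $1/\log n$ factor in $\delta_z$ \emph{uniformly over the random cluster shapes}, where the union bound fails and the strong-Markov extension by $m$ steps, coupled with the logarithmic hitting estimate and exponential control of cluster diameters, is the decisive device.
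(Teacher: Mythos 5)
Your proposal is correct, and it takes a genuinely different route from the paper at the decisive step. Both arguments start from the same identity $f_p(n)-f_0(n)=\sum_{z}E^{\Pp}\bigl[P^0(T_{C_z}\le n)^2-P^0(T_z\le n)^2\bigr]$ and both exploit the same phenomenon, namely that ``hit the cluster but miss its distinguished point up to time $n$'' costs an extra factor $o(1/\log n)$. The paper, however, re-centers at the cluster by translation invariance, applies the union bound over pairs of cluster points \emph{while keeping the avoidance constraint} $T_0>n$ inside each term (so, contrary to the naive union bound you rightly dismiss, no gain is lost), then uses Cauchy--Schwarz and a diffusive rescaling $z\mapsto x+\sqrt{n}\,z$: the factor $(\log n)P^{x+\sqrt n z}(T_x\le n<T_0)\to 0$ is proved via Gaussian heat-kernel estimates, and the sum is controlled by dominated convergence against the integrable profile $f_2(|z|)^2$. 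This is a soft, qualitative argument yielding exactly $o(n/(\log n)^2)$. Your device is different: you quantify the gap $\delta_z=P^0(H_{C_z}\le n,\,H_z>n)$ by extending the walk $m=\lfloor\sqrt n\rfloor$ steps and invoking the strong Markov property together with the potential-theoretic estimate $P^w(H_z>m)\le C\bigl(1+\log(1+|w-z|)\bigr)/\log m$, valid here since $\mathrm{diam}(C_z)\le(\log n)^2\ll\sqrt m$ on the main event, and you dispose of large clusters by subcritical exponential decay. What this buys is a rate: your three terms are $O(\sqrt n/\log n)+O\bigl(n/(\log n)^3\bigr)+o(1)$, i.e.\ $f_p(n)-f_0(n)=O\bigl(n/(\log n)^3\bigr)$, strictly stronger than the paper's limit statement; what it costs is reliance on the quantitative point-hitting estimate and on the classical bound $f_0(2n)=O\bigl(n/(\log n)^2\bigr)$, which you cite (Erd\H{o}s--Taylor/Le Gall) whereas the paper effectively rederives it inline through the $f_2$-integral. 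Two minor points to tidy up: your $\rho_A(m)$ should carry the target $z$ in the notation (it is $\max_{w\in C_z}P^w(H_z>m)$), and in the bound $\delta_z\le 2\bigl(P^0(H_z\le n+m)-P^0(H_z\le n)\bigr)+2\rho_{C_z}(m)P^0(H_z\le n)$ the clean derivation gives $P^0(H_z\le n+m)$ in the last factor, which changes nothing downstream since both are at most $P^0(H_z\le 2n)$.
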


\begin{Rem}\label{indep} 
The fact that $U_{n_1, n_2}$ and $U_{n_3, n_4}$ are not independent for $n_1 < n_2 \le n_3 < n_4$ 
will be an obstacle also for the proof of the estimate corresponding to \cite[(6.u)]{Le86CMP}. 
However, the situation is different in the {\it deterministic} case.  
In the case, we can show the statement corresponding to Theorem \ref{lowdim-dis} (c) by using the statement corresponding to Proposition \ref{inter}, 
as in the proof of \cite[Lemme 6.2]{Le86CMP}. 
See the Appendix for details. 
\end{Rem}

\begin{proof}
In this proof, if we consider $P^{z}$ for $z \in \R^2$, then we take the integer part of $z$ (i.e. $y \in \Z^2$ such that $z \in y + [0,1)^2$).   
By the translation invariance of percolation, 
\[ E^{P^{x,y} \otimes \Pp}\left[ I_n - I_{0,n} \right] = \sum_{z \in \Z^2} E^{\Pp} \left[ P^0 (T_{C_z} \le n)^2  - P^0 (T_{z} \le n)^2 \right] \]
\[ = \sum_{z \in \Z^2} E^{\Pp} \left[ P^{z} (T_{C_0} \le n < T_0) P^{z} (T_{C_0} \le n)  + P^{z} (T_{0} \le n) P^{z} (T_{C_0} \le n < T_0) \right]. \]
\[  \le 2  E^{\Pp} \left[ \sum_{z \in \Z^2}\sum_{x,y \in C_0}  P^{z} (T_{x} \le n < T_0) P^{z} (T_{y} \le n) \right]\]

Using the Cauchy-Schwarz inequality repeatedly, 
\[ \frac{(\log n)^2}{n} E^{\Pp}\left[\sum_{z \in \Z^2} \sum_{x,y \in C_0}  P^{z} (T_{x} \le n < T_0) P^{z} (T_{y} \le n)\right] \]
\begin{equation}\label{inter-step-1} 
\le \left(E^{\Pp}\left[\sum_{x \in C_0} \sum_{z \in \Z^2} \frac{(\log n)^2}{n}  P^{z} (T_{x} \le n < T_0)^2\right] E^{\Pp}[| C_0| ] \sum_{z \in \Z^2} \frac{(\log n)^2}{n} P^{z} (T_{0} \le n)^2 \right)^{1/2}. 
\end{equation} 

By change of variables, 
\[ \sum_{z \in \Z^2} \frac{(\log n)^2}{n}  P^{z} (T_{x} \le n < T_0)^2 = \int_{\R^2} (\log n)^2  P^{x + n^{1/2}z} (T_{x} \le n < T_0)^2  dz.  \]
Here $dz$ denotes the Lebesgue measure on $\R^2$.   
As in \cite[Theoreme 3.5]{Le86CMP}, 
let 
\[ f_2 (r) := \max\{0, - \log r \} + r^{-2} 1_{\{r \ge 1/2\}}. \]
Let $\delta > 0$. 
Then, by the Gaussian heat kernel estimates for the simple random walk on $\Z^2$,   
\[ \limsup_{n \to \infty} (\log n)  P^{x + n^{1/2}z} (n < T_{0} \le n(1 + \delta)) \le \log (1+\delta). \] 
\[ \limsup_{n \to \infty} (\log n)  P^{n^{1/2}z} (T_{0} \le n) P^{x} (T_{0} \ge n\delta) \le f_2(|z|) \limsup_{n \to \infty} P^{x} (T_{0} \ge n\delta) = 0. \]
Since $\delta > 0$ is taken arbitrarily, 
it holds that for each $z \ne 0$,  
\[ \lim_{n \to \infty} (\log n)  P^{x + n^{1/2}z} (T_{x} \le n < T_0) = 0. \] 

We remark that for each $z \in \R^2$, 
\[ (\log n)  P^{x + n^{1/2}z} (T_{x} \le n < T_0)  \le (\log n)  P^{x + n^{1/2}z} (T_{x} \le n) \]
\[= (\log n)  P^{n^{1/2}z} (T_{0} \le n)  \le f_2 (|z|)^2. \]

Now by the dominated convergence theorem, for each $x \in \Z^2$, 
\[ \lim_{n \to \infty} \sum_{z \in \Z^2} \frac{(\log n)^2}{n}  P^{z} (T_{x} \le n < T_0)^2 = 0. \] 

By this and $p < p_T (\Z^2)$, we have 
\[ \sum_{x \in C_0} \sum_{z \in \Z^2} \frac{(\log n)^2}{n}  P^{z} (T_{x} \le n < T_0)^2 \to 0, \ \textup{ $\Pp$-a.s.}\] 

It holds that 
\[ \sum_{x \in C_0} \sum_{z \in \Z^2} \frac{(\log n)^2}{n}  P^{z} (T_{x} \le n < T_0)^2 \le |C_0| \int_{\R^2} f_2 (|z|)^2 dz. \] 

By the dominated convergence theorem, 
\begin{equation}\label{inter-small}
\lim_{n \to \infty} E^{\Pp}\left[\sum_{x \in C_0} \sum_{z \in \Z^2} \frac{(\log n)^2}{n}  P^{z} (T_{x} \le n < T_0)^2\right] = 0. 
\end{equation} 

On the other hand, 
\[  \frac{(\log n)^2}{n}  P^{z} (T_{x} \le n)^2 \le \frac{1}{n} \sum_{y \in \Z^2} f_2 \left(|y| / \sqrt{n} \right)^2 \le C \int_{\R^2}  f_2 (|z|)^2 dz. \]
Here $C$ is a positive constant. 
Hence, 
\[ E^{\Pp}\left[\sum_{x \in C_0} \sum_{z \in \Z^2} \frac{(\log n)^2}{n}  P^{z} (T_{x} \le n)^2\right] \le C E^{\Pp}[|C_0|] \int_{\R^2}  f_2 (|z|)^2 dz < +\infty. \]

By this, \eqref{inter-step-1}, and \eqref{inter-small}, 
we have \eqref{inter-conv}. 
\end{proof}


\section{One-dimensional graphs}

In this section, we deal with one-dimensional graphs. 

\begin{proof}[Proof of Theorem \ref{unbdd}]
Figure 1 after this proof would facilitate understanding the following construction.  

Construct $G = G(\{a_n, b_n\}_n)$ as follows. 
First, prepare the line graph $\left(\mathbb{N}, \{\{n, n+1\} : n \in \mathbb{N}\}\right)$, 
and then attach $b_n$ new vertices each of which is connected by an edge to each of the vertices $a_n$ for each $n$.  
(Here we assume that $\mathbb{N}$ contains $0$.) 
Suitable choices of values of $a_n$ and $b_n$ will lead the desired result. 
It holds that $T_{a_n} \ge a_n \to \infty, n \to \infty$, $P^0$-a.s. 
By \cite[Theorem 2.12]{W}, $G$ is recurrent.  
Hence, $T_{a_n} < +\infty$, $P^0$-a.s. 

We define $a_n$ and $b_n$ by induction on $n$. 
Assume that $a_i, b_i, 1 \le i \le n-1$, are given. 
Let $c_0 := 0$ and $c_{k} := \sum_{i \le k} (a_i + b_i)$, $1 \le k \le n-1$.     
Then we define $a_n$ and $b_n$ satisfying that  
\begin{equation}\label{a_n_1}
a_n > \exp\left( \left(\sum_{i \le n-1} (a_i + b_i) \right)^2 \right), 
\end{equation}
and 
\begin{equation}\label{b_n_1}
b_n = \textup{the integer part of} \ n^{-4} p^{-2a_n}. 
\end{equation}  

(a) It suffices to show that 
\[ E^{\Pp}[|C_0|] < +\infty  \textup{ for any $p < 1$.} \]    
We have that 
\[ \Pp\left(|C_0| > c_{n-1} + l\right) \le p^{a_{n-1} + l}, \ \ 0 \le l < a_{n}. \]
\[   \Pp\left(|C_0| > c_{n-1} + l\right) \le p^{a_{n}}, \ \ a_{n} \le l \le a_n + b_{n}. \]
\eqref{b_n_1} imply that $b_n p^{a_n} = O(n^{-2})$.  
By this and \eqref{a_n_1}, 
\[ E^{\Pp}[|C_0|] = \sum_{n} \sum_{l = 0}^{a_n + b_n} \Pp(|C_0| > c_{n-1} + l) < +\infty. \]

(b) Let $V_n$ be the number of open edges adjacent to $a_n$.  
Then, \[ \sum_{i=1}^{n} V_i \ge U_{T_{a_n}} - R_{T_{a_n}} \ge V_n.\]   
By a large deviation estimate for the sum of the Bernoulli trials,  
\begin{align*}
\widetilde P^{0,p} \left(U_{T_{a_n}} > p b_n/2\right) &\ge \Pp(V_n > p b_n/2) \\
&\ge 1 - \exp(- c b_n p/2). 
\end{align*} 
By the Borel-Cantelli lemma, 
\begin{equation}\label{bcu} 
\liminf_{n \to \infty} \frac{U_{T_{a_n}}}{b_n} \ge \frac{p}{2} > 0, \  \textup{ $\widetilde P^{0, p}$-a.s.} 
\end{equation}

By \cite[Lemma 4.1.1(v)]{Ku} and \eqref{a_n_1},  
\begin{align*}\label{}
\sum_{n} P_{G}^{0}\left( T_{a_n}  > a_n^2 \log a_n \right) 
&\le \sum_{n} a_n^{-2} (\log a_n)^{-1} E_{G}^{P^0}\left[T_{a_n}\right] \\
&\le \sum_{n} 2 a_n^{-1}  (\log a_n)^{-1} R^{G}_{\textup{eff}}\left(0, G \setminus [0, a_n] \right) \\
&= \sum_{n} 2  (\log a_n)^{-1}  < +\infty.  
\end{align*}
 
By using this, \eqref{b_n_1} and the Borel-Cantelli lemma,   
\[ P^{0}\left( T_{a_n}< C_p (\log b_n)^2 \log\log b_n \textup{ infinitely many $n$}\right) = 1. \]
Now \eqref{loglogUn} follows from this and \eqref{bcu}. 

(c) Recall \eqref{a_n_1}.  
By considering the random walk which goes {\it only} in the right direction at every time and calculating the probability, 
we have that for some positive constant $c$ the following holds for any $n$: 
\begin{equation}\label{p_a_n} 
P^0 (T_{a_n} = a_n) \ge \left(2^{a_n - (n-1)}  \prod_{i \le n-1} (b_i + 2)\right)^{-1} \ge c a_n^{-1} 2^{-a_n}. 
\end{equation} 

Using 
\[ E^{\Pp}[(V_n - E^{\Pp}[V_n])^2] = p(1-p) b_n, \]
\[ \widetilde E^{0,p} \left[(U_{T_{a_n}} - R_{T_{a_n}} - V_n)^2\right]  \le \left( 1 +  \sum_{i \le n-1} b_i  \right)^2,\]
 \eqref{b_n_1} and the Minkowski inequality,   
we have that the following holds $P^0$-a.s.: 
\begin{align*} 
E^{\Pp}\left[\left(U_{T_{a_n}} - E^{\Pp}[U_{T_{a_n}} ]\right)^2\right]^{1/2} &= E^{\Pp}\left[ \left(U_{T_{a_n}} - R_{T_{a_n}} - E^{\Pp}[U_{T_{a_n}} -R_{T_{a_n}}] \right)^2\right]^{1/2}\\
&\ge  E^{\Pp}[(V_n - E^{\Pp}[V_n])^2]^{1/2} -  \widetilde E^{0,p} \left[(U_{T_{a_n}} - R_{T_{a_n}} - V_n)^2\right]^{1/2} \\
&\ge  (p(1-p) b_n)^{1/2} - 1 - \sum_{i \le n-1} b_i  \\
&\ge \frac{1}{2}  (p(1-p) b_n)^{1/2}. 
\end{align*}

By this and \eqref{p_a_n},   
\begin{align*} 
\va_{0,p} (U_{a_n}) 
&\ge \frac{1}{4} E^{\Pp}[(V_n - E^{\Pp}[V_n])^2] P^0 (T_{a_n}= a_n) \\
&= \frac{1}{2} p(1-p) b_n^2 P^0(T_{a_n}= a_n) \\
&\ge c n^{-3} p^{-2a_n} 2^{-a_n} \ge c (2p^2)^{-a_n}.     
\end{align*} 

\eqref{VarUn} follows from this.  
\end{proof}

\begin{center}

\begin{tikzpicture}
 \draw (0,0) -- (1,0) -- (2,0) -- (3,0) -- (4,0) -- (5,0) -- (6,0) -- (7,0) -- (8,0) -- (9,0) -- (10,0);
\fill (1,0) circle (2pt);
\fill (2,0) circle (2pt);
\fill (3,0) circle (2pt);
\fill (4,0) circle (2pt);
\fill (5,0) circle (2pt);
\fill (6,0) circle (2pt);
\fill (7,0) circle (2pt);
\fill (8,0) circle (2pt);
\fill (9,0) circle (2pt);
\node[below=4mm] () at (2,0) {$a_{n}$};
\draw (8,-1) -- (8,0) -- (8,1); 
\draw (7.14, -0.5) -- (8,0) -- (8.86, 0.5);
\draw (7.14, 0.5) -- (8,0) -- (8.86, -0.5);
\draw (7.5, -0.86) -- (8,0) -- (8.5, 0.86);
\draw (7.5, 0.86) -- (8,0) -- (8.5, -0.86);
 \draw (19/7, 5/7) -- (2,0) -- (9/7,-5/7); 
 \draw (19/7, -5/7) -- (2,0) -- (9/7,5/7); 
 \node[below=11mm] () at (8,0) {$a_{n+1}$};
 \fill (8,-1) circle (2pt);
 \fill (8,1) circle (2pt);
 \fill (7.14, -0.5) circle (2pt);
  \fill (8.86, 0.5) circle (2pt);
   \fill (7.14, 0.5) circle (2pt);
  \fill (8.86, -0.5) circle (2pt);
   \fill (7.5, -0.86) circle (2pt);
  \fill (8.5, 0.86) circle (2pt);
     \fill (7.5, 0.86) circle (2pt);
  \fill (8.5, -0.86) circle (2pt);
   \fill (19/7, 5/7) circle (2pt);
  \fill (9/7,-5/7) circle (2pt);
   \fill (19/7, -5/7) circle (2pt);
  \fill (9/7,5/7) circle (2pt);

\end{tikzpicture}

Rough figure of the graph $G$\\

\end{center}

\begin{Rem}\label{Rem-unbdd}
For the graph $G$ in the above proof, we have the following:\\
(i) 
\[ \sup_{x \in G} E^{\Pp}[|C_x|] = +\infty, \ p > 0.  \] 
On the other hand,   
\[ E^{\Pp}[|C_x|] < +\infty, \ x \in G, \ p \in [0, p_T (G)). \] 
(ii) 
If $p \in (0, 1/2)$, then \eqref{lim-rec-0} {\it fails} for $G$. 
Indeed, by \eqref{p_a_n}, 
\[ \widetilde E^{0,p}[U_{a_n}] \ge \widetilde E^{0,p}\left[U_{T_{a_n}}, T_{a_n} = a_n\right]  \ge E^{\Pp}[V_n] P^0 (T_{a_n} = a_n)  \ge c p  \frac{b_n}{a_n 2^{a_n}}.  \]
Hence, 
\[ (2p)^{E^{P^0}[R_{a_n}]}  \widetilde E^{0,p}[U_{a_n}] \ge c \frac{p^{a_n + 1} }{a_n} b_n. \]
By this, $0 < p < 1/2$, \eqref{a_n_1} and \eqref{b_n_1},  
\[ \lim_{n \to \infty} (2p)^{E^{P^0}[R_{a_n}]}  \widetilde E^{0,p}[U_{a_n}] = +\infty. \]
\end{Rem}

\subsection{Remarks}

\begin{Thm}[One-dimensional case]\label{lowdim-dis-1}
Assume that $p < p_T (G)$.  
If $G = \Z$, then, 
the laws of $U_n/\sqrt{n}$ under $\widetilde P^{p}$ converges to the law of $\max_{0 \le t \le 1} B_t - \min_{0 \le t \le 1} B_t$ as $n \to \infty$, 
where $(B_t)_t$ denotes Brownian motion. 
\end{Thm}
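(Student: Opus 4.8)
The plan is to reduce the statement to Donsker's invariance principle by showing that on $\Z$ the discrepancy $U_n - R_n$ is negligible at the scale $\sqrt{n}$. The whole argument hinges on the one-dimensional geometry: the trace $S[0,n]$ is the integer interval $[L_n, M_n]$, where $L_n := \min_{0\le i\le n} S_i$ and $M_n := \max_{0\le i\le n} S_i$, so that $R_n = M_n - L_n + 1$.

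First I would establish the exact decomposition $U_n = R_n + A_n + B_n$. Every open cluster on $\Z$ is an interval, and any vertex $z < L_n$ belonging to $\bigcup_{x \in [L_n,M_n]} C_x$ must be connected to the interval through $L_n$, hence lies in $C_{L_n}$; the symmetric statement holds on the right. Therefore the union of the clusters met by the walk is again an interval, extending $A_n$ sites to the left of $L_n$ and $B_n$ sites to the right of $M_n$, where $A_n$ is the length of the maximal run of open edges immediately to the left of $L_n$ and $B_n$ the analogous run to the right of $M_n$. This gives $U_n = R_n + A_n + B_n$ with $A_n, B_n \ge 0$.

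Second I would show that $A_n$ and $B_n$ are tight, indeed $n$-independent in law. By the independence of the walk and the percolation, conditionally on the walk the quantity $A_n$ is the length of the open run to the left of the (then deterministic) point $L_n$, so $\widetilde P^{p}(A_n \ge k) = p^k$ for every $k \ge 0$, and likewise for $B_n$; since $L_n < M_n$ for $n \ge 1$, the two runs are governed by disjoint edge sets. In particular the law of $A_n + B_n$ does not depend on $n$, so $\{A_n + B_n\}_n$ is tight (here we use $p < 1 = p_T(\Z)$), and hence
\[ \frac{U_n - R_n}{\sqrt{n}} = \frac{A_n + B_n}{\sqrt{n}} \longrightarrow 0 \quad \text{in $\widetilde P^{p}$-probability.} \]

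Third I would invoke Donsker's theorem: the linearly interpolated rescaled path $t \mapsto S_{\lfloor nt\rfloor}/\sqrt{n}$ converges in law, in the uniform topology on $C[0,1]$, to standard Brownian motion $(B_t)_{0\le t\le 1}$ (the per-step variance being $1$). The functional $\omega \mapsto \max_{[0,1]}\omega - \min_{[0,1]}\omega$ is continuous for the sup-norm, and the extrema of the piecewise-linear interpolation are attained at integer times, so the continuous mapping theorem yields
\[ \frac{M_n - L_n}{\sqrt{n}} \Longrightarrow \max_{0\le t\le 1} B_t - \min_{0\le t\le 1} B_t, \]
whence $R_n/\sqrt{n}$ converges to the same limit, the additive constant $1$ being negligible. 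Finally, writing $U_n/\sqrt{n} = R_n/\sqrt{n} + (U_n - R_n)/\sqrt{n}$ and combining the last two displays by Slutsky's theorem gives the asserted convergence of $U_n/\sqrt{n}$. The only genuinely delicate point is the clean decomposition $U_n = R_n + A_n + B_n$ and the verification that the boundary runs are geometric independently of the walk; this is exactly where the one-dimensionality of $\Z$, forcing both clusters and the trace to be intervals, is used decisively, and once it is in place the remainder is a routine application of the invariance principle.
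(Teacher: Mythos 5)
Your proposal is correct, and its core reduction is the same as the paper's: on $\Z$ the excess $U_n - R_n$ comes only from the percolation attached at the two endpoints of the range, it is $o(\sqrt{n})$, and then the classical limit law for $R_n/\sqrt{n}$ plus Slutsky finishes the argument. The differences are in execution. The paper specializes its general boundary bound \eqref{bd-always} to $\Z$, where $\partial R_n = \{\min_{i\le n} S_i,\ \max_{i\le n} S_i\}$, obtaining the inequality $0 \le U_n - R_n \le |C_{\max_{i\le n} S_i}| + |C_{\min_{i\le n} S_i}|$, and then uses translation invariance with the exponential tail bound $\Pp(|C_0| > n^{1/4}) \le 2p^{n^{1/4}/2}$ and Borel--Cantelli to get $(U_n-R_n)/\sqrt{n} \to 0$ $\widetilde P^{p}$-a.s.; you instead prove the exact identity $U_n = R_n + A_n + B_n$ with $A_n, B_n$ geometric of parameter $p$ with law independent of $n$, which gives negligibility only in probability --- a weaker mode of convergence, but entirely sufficient for Slutsky --- while buying the exact law of the correction and avoiding any tail estimate. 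For the range itself, the paper simply cites \cite[Theorem 6.1]{JP72}, whereas you rederive that statement from Donsker's invariance principle and the continuous mapping theorem applied to $\omega \mapsto \max_{[0,1]}\omega - \min_{[0,1]}\omega$ (correctly noting that the extrema of the piecewise-linear interpolation are attained at integer times), which makes your proof self-contained. Both routes are sound; there is no gap in yours.
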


If $p = 0$, then, this follows from \cite[Theorem 6.1]{JP72}.    

\begin{proof}
By using \eqref{bd-always} and $\partial R_n = \left\{\min_{i \le n} S_i,  \max_{i \le n} S_i \right\}$,    
we have 
\begin{equation}\label{1-bd} 
0 \le U_n - R_n \le \left| C_{\max_{i \le n} S_i} \right| + \left| C_{\min_{i \le n} S_i} \right|. 
\end{equation}

By the translation invariance of percolation, 
it holds that 
\[ \widetilde P^p \left( \left|C_{\max_{i \le n} S_i}\right| > n^{1/4} \right) = \widetilde P^p \left( \left|C_{\min_{i \le n} S_i}\right| > n^{1/4} \right) = \Pp \left( |C_{0}| > n^{1/4} \right) \le 2p^{n^{1/4}/2}. \]

Hence, by the Borel-Cantelli lemma, 
\[ \frac{U_n - R_n}{n^{1/2}} \to 0, \ n \to \infty, \  \textup{ $\widetilde P^p$-a.s.}  \]

By \cite[Theorem 6.1]{JP72}, 
\[ \frac{R_n}{\sqrt{n}} \Rightarrow \max_{0 \le t \le 1} B_t - \min_{0 \le t \le 1} B_t, \ n \to \infty, \ \textup{ in the law of $P^0$.} \]
\end{proof}

\begin{Rem}
Assume that 
\begin{equation}\label{sup-sub}
\sup_{x \in G} E^{\Pp} \left[|C_x|\right] < +\infty,  
\end{equation} 
and for any vertex $x$,  
\begin{equation}\label{strong-rec} 
\lim_{n \to \infty} \frac{\log R_n}{\log n} = \lim_{n \to \infty} \frac{\log E^{P^x}[R_n]}{\log n} = \alpha \in (0,1), \textup{ \ $P^x$-a.s.} 
\end{equation}
Then, we can show that for any vertex $x$,  
\[ \lim_{n \to \infty} \frac{\log U_n}{\log n} = \lim_{n \to \infty} \frac{\log \widetilde E^{x,p}[U_n]}{\log n} = \alpha, \textup{ \ $\widetilde P^{x,p}$-a.s.} \]
By using \eqref{boundary} and \eqref{sup-sub}, 
\[ \lim_{n \to \infty} \frac{\log \widetilde E^{x,p}[U_n]}{\log n} = \alpha. \]
Now we show that 
\[ \lim_{n \to \infty} \frac{\log U_n}{\log n} = \alpha, \textup{ \ $\widetilde P^{x,p}$-a.s.} \]
In order to show this, it suffices to show that 
\begin{equation}\label{upper-sr} 
\limsup_{n \to \infty} \frac{\log U_n}{\log n} \le \alpha, \textup{ \ $\widetilde P^{x,p}$-a.s.} 
\end{equation} 

Let $\epsilon > 0$. 
Then, by \eqref{strong-rec}, 
three exists a constant $C$ depending on $x$ such that for any $n \ge 1$, 
\[ \widetilde P^{x,p}\left(U_n \ge n^{\alpha + \epsilon}\right) \le \sup_{z \in G} E^{\Pp} \left[|C_z|\right] \frac{E^{P^x}[R_n]}{n^{\alpha + \epsilon}} 
\le \sup_{z \in G} E^{\Pp} \left[|C_z|\right]  \frac{C}{n^{\epsilon /2}}. \]
Hence for sufficiently large integer $k$, 
\[ \sum_{n \ge 1}  \widetilde P^{x,p}\left(U_{n^k} \ge n^{k(\alpha + \epsilon)}\right) < +\infty. \]
By the Borel-Cantelli lemma, 
\[ \limsup_{n \to \infty} \frac{U_{n^k}}{ n^{k(\alpha + \epsilon)}} \le 1, \textup{ \ $\widetilde P^{x,p}$-a.s.} \]
Since $U_n$ is non-decreasing with respect to $n$, 
\[ \limsup_{n \to \infty} \frac{U_{n}}{ n^{\alpha + \epsilon}} \le 1, \textup{ \ $\widetilde P^{x,p}$-a.s.} \]
Since $\epsilon$ is taken arbitrarily, 
we have \eqref{upper-sr}
\end{Rem}

It is known that \eqref{strong-rec} holds for a large class of recurrent fractal graphs. 
See \cite{BJKS, KN09, KM, HHH}. 
It would be interesting to determine whether \eqref{sup-sub} holds on any $p \in [0, p_T (G))$.      
This fails for some ``singular" graph $G$. 
See Remark \ref{Rem-unbdd}.  
Let $G$ be a $d \ge 2$-dimensional Sierpinski gasket graph, or, $2$-dimensional Sierpinski carpet graph.   
Bernoulli percolation on such graphs has been considered by Shinoda \cite{Sh96, Sh02, Sh03}, Kumagai \cite{K97}, Higuchi-Wu \cite{HW08}.   
We conjecture that \eqref{sup-sub} holds for such fractal graphs. 


\section{Positive and negative exponentials}

\begin{proof}[Proof of Theorem \ref{posi}]
Since $\exp(x) \le 1 + x + x^2 \exp(x) /2$ holds for $x \ge 0$,  
as in \cite[(3.1)]{H}, it holds that for each $n \ge 1$, 
\begin{equation}\label{posi-upper}
\Lambda_p (\theta) \le \frac{\theta}{n} \widetilde E^p [U_{n}] +  \frac{\theta^2}{2n} \widetilde E^p \left[U_{n}^2 \exp(\theta U_{n})\right]   
\end{equation}

(i) By \eqref{posi-upper},  for each fixed $n \ge 1$,  
\[ \limsup_{\theta \to 0} \frac{\Lambda_p (\theta) }{\theta} \le \frac{\widetilde E^p [U_n]}{n}. \] 

Hence, 
\[ \limsup_{\theta \to 0} \frac{\Lambda_p (\theta) }{\theta} \le c_p. \] 

By Jensen's inequality, $\Lambda_p (\theta) \ge \theta c_p$.  
Thus it holds that 
\[ \lim_{\theta \to 0} \frac{\Lambda_p (\theta) }{\theta} = c_p. \] 

(ii) 
Assume $G = \Z^2$. 
By the Cauchy-Schwartz inequality,  
\[ U_n^2 \exp(\theta U_n) \le R_n \left(\sum_{x \in S[0,n]} |C_{x}|^2\right) \prod_{y \in S[0,n]} \exp(\theta |C_y|). \] 

By the Cauchy-Schwartz inequality and the translation invariance,   
\begin{align*} 
E^{\Pp}\left[|C_{x}|^2 \prod_{y \in S[0,n]} \exp(\theta |C_y|) \right] &\le E[|C_0|^4]^{1/2} E^{\Pp}\left[\prod_{y \in S[0,n]} \exp(2\theta |C_y|) \right]^{1/2}  \\
&\le E^{\Pp}[|C_0|^4]^{1/2} E^{\Pp}\left[\exp(2\theta R_n |C_0|) \right]^{1/2}  
\end{align*} 

Hence, 
\begin{equation}\label{2nd-upper}
\widetilde E^p \left[U_n^2 \exp(\theta U_n)\right]  \le E^{\Pp}[|C_0|^4]^{1/2} E^{P^0}\left[ R_n^2 E^{\Pp}\left[\exp(2\theta R_n |C_0|) \right]^{1/2} \right].  
\end{equation}

Let 
\[ n(\theta) := \theta E \left[R_{1/\theta}\right]^2, \textup{ and } \ f_p (\theta) := \frac{n (\theta)}{\widetilde E^p \left[U_{n(\theta)}\right]}. \] 

Then, by recalling \eqref{asy-2d}, 
we have that as $\theta \to 0$
\[ n(\theta) \sim \frac{\pi^2}{ \theta (\log (1/\theta))^2}, \]
and 
\[ f_0 (\theta) \sim \frac{1}{\pi} \log n(\theta) \sim \log (1/\theta) - 2 \log\log (1/\theta). \] 

By using \eqref{posi-upper}, \eqref{2nd-upper}, \eqref{boundary} and the fact that $R_n \le n$, 
it holds that 
\begin{align*}
\frac{f_p (\theta)}{\theta} \Lambda_p (\theta)  
&\le 1 + \frac{\theta E^{\Pp}\left[|C_0|^4\right]^{1/2}}{2\widetilde E^p [U_{n(\theta)}]}  E^{P^0}\left[ R_{n(\theta)}^2 E^{\Pp}\left[\exp(2\theta R_{n(\theta)} |C_0|) \right]^{1/2} \right] \\  
&\le 1 + \frac{\theta E^{\Pp}\left[|C_0|^4\right]^{1/2}}{2\widetilde E^p [U_{n(\theta)}]}  E^{P^0}\left[ R_{n(\theta)}\right] n(\theta) E^{\Pp}\left[\exp(2\theta n(\theta) |C_0|) \right]^{1/2} \\  
&\le 1 + \frac{1}{2}E^{\Pp}\left[|C_0|^4\right]^{1/2}  \theta n(\theta) E^{\Pp}\left[\exp(2\theta n(\theta) |C_0|) \right]^{1/2}.  
\end{align*} 

By this and $\lim_{\theta \to 0} \theta n(\theta) = 0$,
it holds that 
\begin{equation}\label{fp-sup}
\limsup_{\theta \to 0} \frac{f_p (\theta)}{\theta} \Lambda_p (\theta) \le 1.  
\end{equation}

By \eqref{lim-rec-0} and $\lim_{\theta \to 0} n(\theta) = +\infty$,
it holds that 
\begin{equation}\label{fp-ratio}
\lim_{\theta \to 0} \frac{f_p (\theta)}{f_0 (\theta)} = \lim_{\theta \to 0} \frac{\widetilde E^p [U_{n(\theta)}]}{E^{P^0}[R_{n(\theta)}]} = 1. 
\end{equation}

By \cite{H}, 
\begin{equation}\label{Hamana}
\lim_{\theta \to 0} \frac{f_0 (\theta)}{\theta} \Lambda_0 (\theta) = 1. 
\end{equation}

By using \eqref{fp-sup}, \eqref{fp-ratio}, \eqref{Hamana} and the fact that $\Lambda_p (\theta) \ge \Lambda_0 (\theta)$, 
we have the desired result. 

If $G = \Z$, then,  
by \eqref{1-bd}, 
it is easy to see that 
\[ \Lambda_p (\theta) = \Lambda_0 (\theta), \ 0 < \theta < \theta_c(p). \] 
Thus we have assertion (ii).  
\end{proof}

\begin{proof}[Proof of Theorem \ref{Dis-DV}]
As in \cite[(1.2)]{DV79}, let 
\[ k(\theta, d) := \theta^{2/(d+2)} \frac{d+2}{2} \left(\frac{2\gamma_d}{d} \right)^{d/(d+2)} > 0, \]
where $\gamma_d$ is the lowest eigenvalue of the Laplacian $-(1/2) \Delta$ for $B(0,1)$ with zero boundary values.
We let $b_n := n^{1/(d+2)}$. 

Let $\mathcal{O}$ be the set of bounded open subsets of $\mathbb{R}^d$. 
Let \[ \textup{dist}(A_1, A_2) := \inf\{\|x-y\| : x \in A_1, y \in A_2\}, \ \  A_1, A_2 \subset \mathbb{R}^d. \]
For each $H \in \mathcal{O}$, 
let $\lambda(H)$ be the smallest eigenvalue of the Laplacian with zero boundary conditions for $H$. 

For any $H_i \in \mathcal{O}, i = 0,1,2,$ satisfying that $H_0 \subset \overline{H_0} \subset H_1 \subset \overline{H_1} \subset H_2$,      
\begin{align*}
\widetilde  E^{p}[\exp(-\theta U_{n})] &\ge \exp(-\theta b_n^d |H_2|) \widetilde P^{p}\left( \bigcup_{x \in S[0,n]} C_x \subset b_n H_2 \right) \\
&\ge \exp(-\theta b_n^d |H_2|)  P^{0}(S[0,n] \subset b_n H_1)  \left(1- \sum_{x \in b_n H_1} \Pp\left( C_x \not\subset b_n H_2\right)\right) 
\end{align*} 

Using the exponential decay of sizes of clusters of subcritical percolations on $\Z^d$ and $\overline{H_1} \subset H_2$,
\[ \frac{1}{b_n^d} \log \left(1- b_n^d |H_1| \Pp\left( \textup{diam}(C_x) > b_n \textup{dist}(H_1, H_2^c) \right)\right) \to 0, \ \ n \to \infty. \]

Therefore,  
\[ \liminf_{n \to \infty} \frac{\log \widetilde  E^{p}[\exp(-\theta U_{n})]}{b_n^d} \ge -\theta |H_2| +  \liminf_{n \to \infty} \frac{\log  P^{0}(S[0,n] \subset b_n H_1)}{b_n^d}.  \]   

By \cite[Lemma 5.1]{DV79}, 
\[ \liminf_{n \to \infty} \frac{\log  P^{0}(S[0,n] \subset b_n H_1)}{b_n^d} \ge -\lambda(H_0).  \] 

Since we can make $|H_2 \setminus H_0|$ arbitrarily small, 
\[ \liminf_{n \to \infty} \frac{\log \widetilde  E^{p}[\exp(-\theta U_{n})]}{b_n^d} \ge -\theta |H_2| -\lambda(H_0) \] 
\[\ge -\inf_{H \in \mathcal{O}} \theta |H| +\lambda(H). \]

By \cite[(5.5) and arguments after Theorem 1]{DV79}, 
\[ k(\theta, d) = \theta^{2/(d+2)} \frac{d+2}{2} \left(\frac{2}{d} \inf_{O \in \mathcal{O}, |O| = 1} \lambda(O)\right)^{d/(d+2)} = -\inf_{H \in \mathcal{O}} \theta |H| +\lambda(H). \] 
Hence, 
\[ \liminf_{n \to \infty} \frac{\log \widetilde  E^{p}[\exp(-\theta U_{n})]}{b_n^d} 
\ge k(\theta, d) = \lim_{n \to \infty} \frac{\log E^{P^0}[\exp(-\theta R_{n})]}{b_n^d}. \]

\eqref{Dis-DV-int} follows from this and the fact that $R_n \le U_n$. 
\end{proof}


\appendix 

\section{A discrete analog of the Wiener sausage} 

In this section, we consider extensions of results concerning the range of random walk, which was stated by \cite[Section 4]{S64}.      
We assume that $G$ is $\Z^d$, $d \ge 3$ and $A$ is a finite subset of $\Z^d$ containing the origin.   
Here we let $\{S_i\}_i$ be the simple random walk.  
We omit most proofs of results in this section and refer the readers to suitable references.

\begin{Def}
For $A \subset \Z^d$ and $n \ge m \ge 0$, let 
\[ W_{m,n} (A) := \bigcup_{i \in [m,n]} (S_i + A), \textup{ and, } U_{m,n} (A) := \left| W_{m,n} (A) \right|. \]
Let $W_n (A) := W_{0,n}(A)$ and $U_n (A) := U_{0,n}(A)$.            
\end{Def}
 
If $A = \{0\}$, then, $U_n (A) = R_n$. 
The process $\{U_n (A)\}_n$  is also a natural analog of the Wiener sausage.   
\cite[Section 4]{S64} states that the strong law holds for $d \ge 3$.   
Port \cite{P65, P66}, Port-Stone \cite{PS68, PS69} considered asymptotics for means of the volumes in connection with interacting particle systems.  

\begin{Thm}[{\cite[Section 4]{S64}, \cite[Problem 14, Chapter 6]{S76}}]
If $G = \Z^d, d \ge 3$, then, 
\[ \lim_{n \to \infty} \frac{U_n (A)}{n} = \ca(A), \ \ \textup{  $P$-a.s.}  \]
\end{Thm}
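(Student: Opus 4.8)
The plan is to follow the same two-part strategy used for Theorem \ref{DisLLN}: first pin down the growth of the mean $E[U_n(A)]$, then upgrade to an almost sure statement via Liggett's subadditive ergodic theorem \cite{Li}. The difference from the percolation setting is that the translates $S_i+A$ and $S_j+A$ may \emph{partially} overlap, so the cluster-based last-exit decomposition \eqref{last-exit} does not transcribe directly. Instead I would decompose \emph{point by point}: a site $y$ lies in $W_n(A)$ precisely when the walk visits the set $y-A$ before time $n$, and I count $y$ at the last such visit.

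Concretely, writing $E[U_n(A)]=\sum_{y\in\Z^d}P^0(H_{y-A}\le n)$ and splitting according to the last time $k\le n$ and the last vertex $b\in y-A$ occupied in $y-A$, the Markov property gives
\[ E[U_n(A)]=\sum_{k=0}^n\sum_{y\in\Z^d}\sum_{b\in y-A}P^0(S_k=b)\,P^b\!\left(T_{y-A}>n-k\right). \]
Reparametrizing the inner sum by $(a,b)$ with $a\in A$, $y=a+b$, translating the hitting problem so the target becomes $a-A$ and using $\sum_b P^0(S_k=b)=1$, this collapses to
\[ E[U_n(A)]=\sum_{m=0}^{n}\sum_{a\in A}P^0\!\left(T_{a-A}>m\right). \]
Since $\Z^d$ is transient for $d\ge 3$, each summand decreases to $P^0(T_{a-A}=\infty)$, so the Ces\`aro average converges and $E[U_n(A)]/n\to\sum_{a\in A}P^0(T_{a-A}=\infty)$. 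Shifting the start point by $-a$ identifies $P^0(T_{a-A}=\infty)=P^{-a}(T_{-A}=\infty)$, whence the limit equals $\sum_{z\in -A}P^z(T_{-A}=\infty)=\ca(-A)$ by \eqref{cap-cap}; as $x\mapsto -x$ is a graph automorphism of $\Z^d$, $\ca(-A)=\ca(A)$, giving $\lim_n E[U_n(A)]/n=\ca(A)$.

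For the almost sure statement I would verify the hypotheses of \cite{Li} for the family $U_{m,n}(A):=\left|\bigcup_{i\in[m,n]}(S_i+A)\right|$, exactly parallel to \eqref{Umn-Def}. Subadditivity $U_{0,n}(A)\le U_{0,m}(A)+U_{m,n}(A)$ is immediate from the union bound on the underlying sets; stationarity of $\{U_{m,m+l}(A)\}_m$ and integrability ($U_{0,1}(A)\le 2|A|$) follow from translation invariance of the walk. The required ergodicity is in fact easier here than Lemma \ref{mix}: since $U_{m,m+l}(A)=\left|\bigcup_{i=0}^{l}\big((S_{m+i}-S_m)+A\big)\right|$ depends only on the increments after time $m$, it is \emph{exactly} independent of $\mathcal{F}_m$ (hence of $U_{0,l}(A)$) whenever $m\ge l$, so the strong mixing needed to make the Liggett limit deterministic holds trivially. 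Liggett's theorem then yields $U_n(A)/n\to \lim_n E[U_n(A)]/n=\ca(A)$ both $P$-a.s. and in $L^1$, as claimed.

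The main obstacle is the first step: correctly organizing the point-by-point last-exit decomposition, carrying out the $(a,b)$ reparametrization without double counting, and matching the resulting constant to $\ca(A)$ through the translation-and-reflection identity. Once the mean is computed, the passage to the almost sure limit is routine, being a clean specialization of the argument of Section 2 with the deterministic translate $x+A$ playing the role of the cluster $C_x$.
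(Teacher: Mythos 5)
Your proposal is correct, and it takes a genuinely different route from the paper, for the simple reason that the paper offers no proof of this statement at all: it quotes the result from Spitzer \cite{S64, S76}, explicitly omits proofs in the appendix, and adds only the caveat that the last exit decomposition as in \eqref{last-exit} ``may not work well in this case in a direct manner.'' Your argument is a self-contained transcription of the paper's Section 2 strategy (mean asymptotics plus Liggett \cite{Li}), and its key merit is that it resolves exactly the obstruction behind that caveat: since the translates $S_i+A$ can overlap partially, you abandon the cluster-style decomposition of \eqref{last-exit} and perform the last-exit decomposition site by site, counting $y\in W_n(A)$ at the walk's last visit to $y-A$; the resulting identity $E[U_n(A)]=\sum_{m=0}^{n}\sum_{a\in A}P^0\left(T_{a-A}>m\right)$ is correct (it is essentially Port's formula recorded in Remark \ref{hk-alt}), and monotonicity, \eqref{cap-cap} and the symmetry $\ca(-A)=\ca(A)$ identify the Ces\`aro limit as $\ca(A)$. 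On the ergodic side, your observation that the blocks $U_{nl,(n+1)l}(A)$ are functions of disjoint blocks of increments, hence exactly i.i.d., is also right, and it makes precise the sense in which this theorem is easier than Theorem \ref{DisLLN}: for $U_n$ the single shared percolation configuration couples the time blocks and forces the asymptotic-independence argument of Lemma \ref{mix}, whereas here strong mixing is trivial and the Liggett limit is automatically deterministic. As for what each approach buys: the paper's citation is short and ties the statement to the classical literature, while your proof keeps the appendix self-contained within the paper's own toolkit and delivers the a.s. and $L^1$ (and, combined with the uniform integrability argument of Section 2, $L^q$) convergence in one stroke.
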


As was noted in \cite[Problem 14, Chapter 6]{S76},  this gives an alternative definition of the capacity of a set.  
By using this, in the same manner as in the proof of \eqref{LLN},     
we can also show that for any $q < +\infty$, 
\[ \lim_{n \to \infty} \frac{U_n (A)}{n} = \ca(A), \ \ \textup{ in $L^q$. }  \]
The last exit decomposition as in \eqref{last-exit} may not work well in this case in a direct manner.

\begin{Thm}\label{VaCLT}
If $d \ge 4$, then,\\
(i) 
\[ c_{\textup{var}}(A) := \lim_{n \to \infty} \frac{\va(U_n (A))}{n} \]
exists and is positive. \\
(ii) 
\[ \frac{U_n (A) - E[U_n (A)]}{\sqrt{c_{\textup{var}} (A) n}} \Rightarrow N(0,1), \ n \to \infty, \textup{ in law.} \]
Here and henceforth $N(0,1)$ is the normal distribution with mean zero and covariance one. 
\end{Thm}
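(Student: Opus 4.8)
The plan is to mirror the classical central limit theorem for the range of the simple random walk in $d \ge 4$ (Jain--Pruitt, cf. \cite{JP72}), replacing the single point by the finite set $A$ throughout, and to read off the variance asymptotics from the very same decomposition. Since here $A$ is deterministic, there is no percolation to contend with, and the whole analysis takes place on the path space of $\{S_i\}_i$.

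First I would record the last-exit decomposition analogous to \eqref{last-exit}, assigning each site of $W_n(A)$ to the last time its $A$-translate is centred at the walk:
\[ U_n(A) = \sum_{i=0}^n \left| (S_i + A) \setminus W_{i+1,n}(A) \right| =: \sum_{i=0}^n \xi_{i,n}. \]
By the Markov property and transience ($d \ge 3$), the finite-future term $\xi_{i,n}$ is well approximated by its infinite-future version $\xi_i := \bigl| (S_i + A) \setminus \bigcup_{j \ge i+1}(S_j + A) \bigr|$, which is a function of the future increments $(S_{j}-S_i)_{j\ge i}$ and therefore defines a stationary sequence with $E[\xi_i] = \ca(A)$ (consistent with the strong law above). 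The difference $\sum_{i=0}^n (\xi_{i,n} - \xi_i)$ counts sites recovered only after time $n$ and is negligible in $L^2$: its second moment is controlled, uniformly in $n$, by the decay of the return probabilities $\sum_k P^0(S_k \in B)$ for a finite set $B$ depending on $A$. Thus it suffices to prove the variance asymptotics and the CLT for $\Sigma_n := \sum_{i=0}^n \xi_i$.

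Second, to handle $\Sigma_n$ I would truncate the look-ahead: set $\xi_i^{(L)} := \bigl| (S_i + A) \setminus W_{i+1,i+L}(A) \bigr|$, a bounded $L$-dependent stationary sequence with $\xi_i^{(L)} \downarrow \xi_i$ and $E\bigl[(\xi_i^{(L)} - \xi_i)^2\bigr] \to 0$ as $L \to \infty$. For each fixed $L$ the classical CLT for $L$-dependent stationary sequences (Hoeffding--Robbins) gives $n^{-1/2}\bigl(\sum_{i\le n}\xi_i^{(L)} - E[\sum_{i\le n}\xi_i^{(L)}]\bigr) \Rightarrow N(0, \sigma_L^2)$ with $\sigma_L^2 = \va(\xi_0^{(L)}) + 2\sum_{k=1}^L \mathrm{Cov}(\xi_0^{(L)}, \xi_k^{(L)})$. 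The covariances $\mathrm{Cov}(\xi_0, \xi_k)$ are absolutely summable precisely when $d \ge 4$ — this is the point at which the dimension restriction enters, and it is exactly the regime in which $\va(R_n) \simeq n$ rather than $n\log n$ — so $\sigma_L^2$ converges to a finite limit $c_{\mathrm{var}}(A)$ as $L \to \infty$. Combining the $L$-dependent CLT with the uniform-in-$n$ $L^2$ bound on $\Sigma_n - \sum_{i\le n}\xi_i^{(L)}$ and letting $L \to \infty$ yields both (i), the existence of $c_{\mathrm{var}}(A) = \lim_n \va(U_n(A))/n$, and (ii), the CLT with that constant.

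The main obstacle is the positivity assertion $c_{\mathrm{var}}(A) > 0$: the covariance sum defining $c_{\mathrm{var}}(A)$ must be shown not to collapse to zero. I would establish a matching lower bound $\va(U_n(A)) \ge c\,n$ by a conditioning argument — revealing the walk outside a sparse collection of disjoint time-windows of bounded length and showing that the conditional variance contributed by the new volume created inside each window is bounded below by a positive constant, so that summing over the $\simeq n$ windows gives linear growth. A secondary technical point is making the two $L^2$ approximation errors, finite-future versus infinite-future and look-ahead truncation, quantitatively uniform in $n$; both reduce to the summability of return probabilities over a finite neighbourhood of $A$, which converges for $d \ge 3$ and whose covariance tails are summable exactly for $d \ge 4$.
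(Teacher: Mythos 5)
Your route is genuinely different from the paper's, and the difference is where the trouble lies. The paper proves Theorem \ref{VaCLT} by imitating Le Gall's argument for the Wiener sausage, \cite[Lemma 4.2]{Le88}: iterate the decomposition $U_{0,2n}(A)=U_{0,n}(A)+U_{n,2n}(A)-\left|W_{0,n}(A)\cap W_{n,2n}(A)\right|$ over dyadic scales, so that variance asymptotics and the CLT come from i.i.d.\ block sums, and the hypothesis $d\ge4$ enters only through moments of the intersection of two \emph{independent} copies (of expected size $O(\log n)$ in $d=4$, $O(1)$ in $d\ge5$), which are absorbed by Cauchy--Schwarz across the $\log n$ scales. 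You instead follow the Jain--Orey scheme: stationary last-exit sequence $\xi_i$, $L$-dependent truncation, Hoeffding--Robbins, then $L\to\infty$. That scheme is sound for $d\ge5$, but as written it does not prove the case $d=4$, which is the delicate part of the statement.

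The concrete gap is your claim that the covariances $\mathrm{Cov}(\xi_0,\xi_k)$ are absolutely summable ``precisely when $d\ge4$,'' justified by reducing to tails of return probabilities. Let $\xi_0^{(k)}:=|A\setminus\bigcup_{j=1}^{k}(S_j+A)|$; since $\xi_0^{(k)}$ is $\mathcal{F}_k$-measurable while $\xi_k$ is a function of the increments after time $k$ (hence independent of $\mathcal{F}_k$, with mean $\ca(A)$), the reduction you invoke gives exactly
\[ \left|\mathrm{Cov}(\xi_0,\xi_k)\right|=\left|E\left[(\xi_0-\xi_0^{(k)})(\xi_k-\ca(A))\right]\right|\le |A|^2\, P\left(\exists j>k:\ S_j\in A-A\right)\le C\, k^{1-d/2}, \]
and $\sum_k k^{1-d/2}<\infty$ if and only if $d\ge5$: at $d=4$ the bound is $1/k$ and the series diverges. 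The same failure propagates to your two $L^2$-approximation steps; for instance, in $d=4$ the difference $\sum_{i\le n}(\xi_{i,n}-\xi_i)$ has mean of order $\log n$ (the expected number of sites of $W_n(A)$ whose last covering time exceeds $n$), so its second moment is certainly not ``uniform in $n$'' as claimed, and the error between $\Sigma_n$ and $\sum_{i\le n}\xi_i^{(L)}$ cannot be bounded by $n\delta_L$ with $\delta_L\to0$ using these crude tails alone. To salvage your route at $d=4$ one must extract cancellation from the centering of $\xi_k$: conditioning on $\mathcal{F}_k$, the conditional covariance is a sum over uncovered $a\in A$ of covariances between the indicator that the post-$k$ walk hits the finite set $a-A$, lying at distance $|S_k|\sim\sqrt{k}$, and a functional essentially determined by the first few post-$k$ increments; a gradient estimate for hitting probabilities (they vary by only $O(|S_k|^{1-d})$ when the starting point moves by $O(1)$) then improves $k^{1-d/2}$ to a summable power, roughly $k^{-5/4}$ in $d=4$. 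Nothing of this kind appears in your proposal, so it establishes the theorem only for $d\ge5$. (Your positivity argument is also only a sketch, but positivity is comparatively routine; the summability at $d=4$ is the missing idea, and avoiding that issue is precisely what the paper's choice of Le Gall's decomposition buys.)
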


We are not sure whether the value of $c_{\textup{var}}(A)$ differs depending on the choice of $A$.   
We can show this by imitating the proof of \cite[Lemma 4.2]{Le88} with some alterations.     

\begin{Thm}\label{2-CLT-det}  
Let $d = 2$. 
Let $\gamma$ be the renormalized self-intersection local time of two-dimensional Brownian motion given in \cite[Theoreme 6.1]{Le86CMP}. 
Formally, 
\[ \gamma := \int_{0}^{1} \int_{0}^{t} \delta_{0} (B_t - B_s) dsdt - E\left[ \int_{0}^{1} \int_{0}^{t} \delta_{0} (B_t - B_s) dsdt \right],  \]
where $(B_t)_t$ is the standard 2-dimensional Brownian motion and $\delta_0$ be the delta function. 
More precisely, 
\[ \gamma := \lim_{\epsilon \to 0} \left[ \int_{0}^{1} \int_{0}^{t} \varphi_{\epsilon} (B_t - B_s) dsdt - E\left[ \int_{0}^{1} \int_{0}^{t}  \varphi_{\epsilon}(B_t - B_s) dsdt \right] \right],  \]
where $\varphi_{\epsilon}(x) = \epsilon^{-2} \varphi(x/\epsilon)$ is a suitable approximation of the identity, where $\varphi$ is a smooth nonnegative function in the Schwartz class whose integration is one. 
Then, the following weak convergence holds:   
\[ \frac{(\log n)^2}{n} \left( U_{n}(A) - E[U_n (A)] \right) \Rightarrow -2\pi^2 \gamma, \  n \to \infty, \textup{ in law.} \]
\end{Thm}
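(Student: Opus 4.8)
The plan is to reduce the statement to Le Gall's fluctuation theorem for the planar range \cite{Le86CMP}, which asserts $\frac{(\log n)^2}{n}\left(R_n - E^{P^0}[R_n]\right) \Rightarrow -2\pi^2 \gamma$. The guiding principle is that in two dimensions the second-order fluctuations of such a ``sausage'' functional are governed solely by the self-intersections of the underlying walk, so that the fixed finite window $A$ should not alter the limit. First I would record the hitting-time representation $U_n(A) = \sum_{z \in \Z^2} 1_{\{H_{z-A} \le n\}}$, where $z - A := \{z - a : a \in A\}$, so that $R_n = \sum_z 1_{\{H_z \le n\}}$ is the special case $A = \{0\}$ and $0 \le U_n(A) - R_n$ counts those $z$ lying within $\textup{diam}(A)$ of the trace but not on it.

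Given Le Gall's result together with Slutsky's lemma, it then suffices to prove that the centered difference vanishes at the fluctuation scale, that is
\[ \frac{(\log n)^2}{n}\Big( \big(U_n(A) - R_n\big) - E^{P^0}\big[U_n(A) - R_n\big]\Big) \to 0 \quad \text{in } L^2(P^0), \]
equivalently $\va\big(U_n(A) - R_n\big) = o\big(n^2/(\log n)^4\big)$. Writing $D_n := U_n(A) - R_n$ and expanding the variance in the hitting representation, one is led to two-point quantities of the form $P^0(H_{z-A} \le n,\, H_{w-A}\le n) - P^0(H_z \le n,\, H_w \le n)$ summed over $z,w$; these are exactly the discrepancies between the intersection volume of two translated-$A$ sausages and of two ranges. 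I would control them by the same two independent walks device and the planar hitting asymptotics used in Proposition \ref{inter} (the analogue of \cite[Theoreme 3.5]{Le86CMP}): the capacity of the finite set $z-A$ enters only through a deterministic shift $\log n \mapsto \log n + O(1)$ in the leading hitting probability, which is absorbed into $E^{P^0}[D_n] = O\big(n/(\log n)^2\big)$ and cancels upon centering, leaving a strictly smaller variance.

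The main obstacle is precisely this variance bound. The crude estimate available from Lemma \ref{lem-okada}, namely $\va(D_n) \le |A|^2 E^{P^0}[L_n^2] = O\big(n^2/(\log n)^4\big)$, is of the critical order and is therefore not sufficient; one must extract the extra logarithmic gain by showing that hitting the finite set $z-A$ and hitting the single point $z$ are asymptotically perfectly correlated on the scale $n/(\log n)^2$. Concretely this requires a uniform two-parameter refinement of the planar hitting-probability estimates for finite sets, in the spirit of the computation behind Proposition \ref{inter}, establishing that the cross-covariance $\mathrm{Cov}\big(U_n(A), R_n\big)$ agrees with both $\va(R_n)$ and $\va\big(U_n(A)\big)$ up to $o\big(n^2/(\log n)^4\big)$.

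An alternative that avoids Slutsky is to run Le Gall's moment method directly on $U_n(A)$: expand the centered moments $\big(\frac{(\log n)^2}{n}\big)^k E^{P^0}\big[(U_n(A) - E^{P^0}[U_n(A)])^k\big]$ in terms of multi-point hitting probabilities of the sets $z_i - A$, show that replacing each $z_i - A$ by $z_i$ perturbs these moments by $o(1)$ (again because $A$ only affects lower-order constants in the planar hitting asymptotics), and conclude that they converge to the moments $E[(-2\pi^2\gamma)^k]$ computed by Le Gall; since $\gamma$ is determined by its moments, this yields the weak convergence. Either route isolates the same analytic core, so I would present the comparison argument and invoke \cite{Le86CMP} for the identification of the limit.
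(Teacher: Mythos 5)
Your reduction via Slutsky is sound as far as it goes, but it leaves a genuine gap at exactly the point where the theorem lives: the estimate $\va\bigl(U_n(A)-R_n\bigr)=o\bigl(n^2/(\log n)^4\bigr)$ is asserted, not proved, and nothing you invoke supplies it. Since $\va(R_n)\simeq n^2/(\log n)^4$ by \cite{JP72}, this bound says precisely that $U_n(A)$ and $R_n$ have \emph{identical} fluctuations to leading order, which, given Le Gall's theorem for $R_n$, is the full content of the statement being proved. The tools you point to cannot deliver it: Lemma \ref{lem-okada} gives only the critical order (as you concede), and Proposition \ref{inter} is a \emph{first-moment} comparison of expected intersection volumes, whereas your covariance terms require four-point hitting probabilities $P^0\left(H_{z-A}\le n < H_z,\ H_{w-A}\le n < H_w\right)$ controlled with relative precision $o(1)$ against quantities of the exact order $n^2/(\log n)^4$. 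The heuristic that the window $A$ ``only shifts $\log n$ by $O(1)$'' is a statement about means; it indeed cancels under centering at first order, but it says nothing about the joint second-moment structure, which is where the extra logarithmic gain must be extracted. In short, the hard analytic work has been relocated, not removed.

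The paper's own proof is a one-line reference: imitate Le Gall's proof of \cite[Theoreme 6.1]{Le86CMP}. That route avoids your difficulty structurally. One decomposes $[0,n]$ into dyadic blocks, notes that (unlike the percolation process $U_n$, cf.\ Remark \ref{indep}) the sausage volumes $U_{m,n}(A)$ over disjoint time blocks are independent, so the centered sum of block contributions is negligible at scale $n/(\log n)^2$, and the fluctuations are carried entirely by the pairwise block-intersection terms; these converge jointly to Brownian intersection local times once one checks that intersections of two independent $A$-sausages and of the corresponding two ranges differ by $o\bigl(n/(\log n)^2\bigr)$, which is the deterministic analogue of Proposition \ref{inter}. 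Your alternative sketch is in this spirit, but note that Le Gall's argument is an $L^2$/decomposition argument, not a method of moments; proving convergence of \emph{all} centered moments of $U_n(A)$, together with moment-determinacy of $\gamma$, would be an additional and unnecessary burden, and is likewise not carried out. If you want a complete proof, perform the block decomposition for $U_n(A)$ directly; observe also that running it \emph{jointly} for the pair $\bigl(R_n, U_n(A)\bigr)$ is the natural way to prove your missing variance bound, which shows that your proposed reduction is not a shortcut but a restatement of the problem.
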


We can show this assertion in the same manner as in the proof of \cite[Theoreme 6.1]{Le86CMP}.   

\begin{Rem}\label{hk-alt}
We can give an alternative proof of \eqref{hk}. 
By the translation invariance,  
\[ \widetilde E^p [U_n] = E^{\Pp}\left[ E^{P^0}\left[ U_n (C_0) \right] \right].  \] 
By the proof of \cite[Theorem 3.1]{P66}, 
\[ E[U_n (A)] - n \ca(A) = \sum_{k = 1}^{n} \sum_{x \in A} P^x (k < T_{A} < +\infty). \]
Hence, by noting $p < p_T (\Z^d)$ and \eqref{cp-cap}, 
\[ \widetilde E^p [U_n] - n c_p = \sum_{k = 1}^{n} \sum_{A \subset \Z^d, \textup{ finite}} \Pp(C_0 = A) \sum_{x \in A} P^x (k < T_{A} < +\infty). \]
\eqref{hk} follows from this.
\end{Rem} 

\subsection{Intermediate process}

Let $\overline{U}^{(p)}_{m, n} := E^{\Pp}[U_{m,n}], \ m \le n$, and let $\overline{U}^{(p)}_{n} := \overline{U}^{(p)}_{0, n}$.   
$\{ \overline{U}^{(p)}_{n} \}_n$ is more similar to $\{U_n (A)\}_n$ than $\{U_n\}_n$.  
Contrary to the case of $\{U_n (A)\}_{n}$, 
we can consider this on general graphs which do not possess a group structure.  
An application of Theorem \ref{DisLLN} is the following: 

\begin{Cor}[SLLN]
Let $G$ be a vertex-transitive graph, $o$ be a vertex of $G$, and $c_p$ be as in \eqref{cp}. 
Then, \\
 (i) \[ \lim_{n \to \infty} \frac{\overline{U}^{(p)}_{n}}{n} = c_p, \ P^o \textup{-a.s.} \] 
 (ii) \[ \lim_{n \to \infty} \frac{E^{P^{o}}[U_{n}]}{n} = c_p, \ \Pp\textup{-a.s.} \]
\end{Cor}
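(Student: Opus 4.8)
Both assertions are consequences of Theorem \ref{DisLLN}, which gives $U_n/n \to c_p$ both $\widetilde P^{o,p}$-almost surely and in $L^q(\widetilde P^{o,p})$ for every $q$. The plan is to combine Fubini's theorem with a uniform integrability argument: one integrating out the percolation for (i), and one integrating out the walk for (ii). The difference between the two parts lies entirely in where the needed uniform integrability comes from, and this is where (ii) is genuinely harder.

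For (i), I would argue as follows. By Fubini applied to the $\widetilde P^{o,p}$-a.s. part of \eqref{LLN}, for $P^o$-a.e.\ trajectory $w$ we have $U_n/n \to c_p$, $\Pp$-a.s. Fix such a $w$. The essential observation is that the exponential-moment bound used in the $L^q$-part of the proof of Theorem \ref{DisLLN} is uniform in the walk: since $U_n \le \sum_{i=0}^{n} |C_{S_i}|$ holds deterministically, Jensen's inequality (convexity of $\exp$, averaging over $0 \le i \le n$) together with the translation invariance of $\Pp$ gives, for a small $\theta>0$ as in \eqref{AV3},
\[
\sup_{n} E^{\Pp}\left[\exp\left(\theta \frac{U_n}{n+1}\right)\right] \le E^{\Pp}\left[\exp(\theta|C_o|)\right] < +\infty,
\]
and this bound is valid for \emph{every} fixed $w$. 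Hence $\{U_n/n\}_n$ is uniformly integrable with respect to $\Pp$, so $\overline{U}^{(p)}_{n}/n = E^{\Pp}[U_n/n] \to c_p$. As this holds for $P^o$-a.e.\ $w$, assertion (i) follows.

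For (ii) I would run the symmetric scheme with the two coordinates interchanged. Fubini applied to \eqref{LLN} gives, for $\Pp$-a.e.\ configuration $\omega$, that $U_n/n \to c_p$, $P^o$-a.s.; it then suffices to show that for $\Pp$-a.e.\ $\omega$ the family $\{U_n/n\}_n$ is uniformly integrable with respect to $P^o$, since this yields $E^{P^o}[U_n]/n \to c_p$. Concretely I would establish $\sup_n E^{P^o}[(U_n/n)^2] < +\infty$ for $\Pp$-a.e.\ $\omega$. By Cauchy--Schwarz, $U_n^2 \le (n+1)\sum_{i=0}^{n}|C_{S_i}|^2$, whence, writing $\ell_n(x) := \sum_{i=0}^{n} P^o(S_i = x)$,
\[
E^{P^o}\left[\left(\frac{U_n}{n}\right)^2\right] \le \frac{n+1}{n^2}\sum_{x} \ell_n(x)\,|C_x|^2 ,
\]
and the problem reduces to an almost-sure (in $\omega$) bound, uniform in $n$, on the local-time–weighted sum $n^{-1}\sum_x \ell_n(x)|C_x|^2$, whose $\Pp$-mean equals $\tfrac{n+1}{n}E^{\Pp}[|C_o|^2] < +\infty$ because $\sum_x \ell_n(x) = n+1$. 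In the transient case $\ell_n(x) \le \sum_{i \ge 0} P^o(S_i = x) < +\infty$, which is the structural input that (i) did not require; the recurrent case has $c_p = 0$ and is treated analogously with the obvious simplifications.

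The main obstacle is precisely this last step for (ii): upgrading the finite $\Pp$-mean of $n^{-1}\sum_x \ell_n(x)|C_x|^2$ to an almost-sure bound uniform in $n$. Unlike in (i), the exponential-moment bound is \emph{not} uniform over $\omega$, since arbitrarily large finite clusters occur somewhere in space, so uniform integrability over the walk must be earned configuration by configuration. I expect to obtain it from the exponential decay of subcritical cluster sizes (\cite{AV}), which endows the weights $|C_x|^2$ with all moments and makes the weighted field effectively finite-range, combined with transience. Alternatively, one can bypass uniform integrability altogether by a concentration estimate: a bound of the form $\va_{\Pp}(E^{P^o}[U_n]) = O(n)$ for the variance under $\Pp$ — plausibly obtainable via the Efron--Stein inequality and exponential decay of cluster sizes — yields, through the Borel--Cantelli lemma along a geometric subsequence $n_k \simeq \gamma^k$ together with the monotonicity of $n \mapsto E^{P^o}[U_n]$ (which controls the gaps after letting $\gamma \downarrow 1$), the desired almost-sure convergence in (ii).
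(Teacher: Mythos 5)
Your part (i) is correct and complete, but it is not the paper's argument. The paper applies Liggett's subadditive ergodic theorem \cite{Li} to the walk-side process $X_{m,n} := E^{\Pp}[U_{m,n}]$: this is an automorphism-invariant functional of the random set $S[m,n]$, so Liggett's stationarity hypotheses follow from the Markov property and vertex-transitivity exactly as in the proof of Theorem \ref{DisLLN}; this yields $P^o$-a.s.\ convergence of $E^{\Pp}[U_n]/n$ to some random variable $Y$, and then the $L^1(\widetilde P^{o,p})$ convergence in \eqref{LLN}, Fubini, and a subsequence extraction identify $Y = c_p$. Your route for (i) --- Fubini on the a.s.\ statement, plus the observation that the Jensen bound
\[
E^{\Pp}\left[\exp\left(\theta \tfrac{U_n}{n+1}\right)\right] \le \frac{1}{n+1}\sum_{i=0}^{n} E^{\Pp}\left[\exp\left(\theta |C_{S_i}|\right)\right] = E^{\Pp}\left[\exp\left(\theta |C_o|\right)\right]
\]
holds for \emph{every} fixed trajectory by \eqref{AV3} and vertex-transitivity, hence uniform integrability under $\Pp$ --- is sound and more self-contained, since it re-verifies nothing about subadditive processes. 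What the paper's scheme buys is that it never needs uniform integrability at all: existence of the a.s.\ limit comes for free from the ergodic theorem, and its identification comes for free from $L^1$ convergence along a subsequence.

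For (ii) there is a genuine gap: your write-up stops exactly at the step that constitutes the assertion. The reductions you do carry out (Fubini; $E^{P^o}[(U_n/n)^2] \le \frac{n+1}{n^2}\sum_x \ell_n(x)|C_x|^2$; the computation of its $\Pp$-mean) are correct, but the decisive estimate --- either an a.s.-in-$\omega$ bound on $n^{-1}\sum_x \ell_n(x)|C_x|^2$ uniform in $n$, or the variance bound for $E^{P^o}[U_n]$ under $\Pp$ of order $O(n)$ --- is only announced as ``expected'' or ``plausible''. Two further points. First, the appeal to transience is a red herring: $\sum_x \ell_\infty(x) = \sum_{i \ge 0} 1 = +\infty$ on any infinite graph, so bounding $\ell_n$ by the Green function gives nothing, and the whole difficulty is to exploit the $1/n$ normalization; the a.s.\ control of this local-time--weighted sum is a statement of the same nature and difficulty as (ii) itself. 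Second, the recurrent case is \emph{not} simpler: when $c_p = 0$, a.s.\ convergence of $U_n/n$ to $0$ under $P^o$ still does not yield convergence of the means without uniform integrability, so the same work is required.

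Of your two sketches, the second one is the one that can be completed, and I would encourage you to carry it out: resampling an edge $e$ changes $E^{P^o}[U_n]$ only on the event that an endpoint of $e$ is connected to $S[0,n]$, and the Efron--Stein sum is then bounded by $C\, \Delta_G\, p^{-1} (n+1) E^{\Pp}[|C_o|^3]$, which is $O(n)$ by \cite{AV}; Chebyshev, Borel--Cantelli along $n_k \simeq \gamma^k$, monotonicity of $n \mapsto E^{P^o}[U_n]$, and $\gamma \downarrow 1$ then finish the proof as you describe. Be aware that the obstruction you met is real and not an artifact of your approach: the paper disposes of (ii) with the sentence ``We can show (ii) in the same manner'', but the symmetric application of Liggett's theorem to $Y_{m,n} := E^{P^o}[U_{m,n}]$ under $\Pp$ is delicate, because shifting the time indices induces no transformation of the percolation space, and the required distributional identity between $\{Y_{m,m+k}\}_k$ and $\{Y_{m+1,m+k+1}\}_k$ can actually fail: already on $\Z$, the variable $Y_{0,1} = \frac12|C_{-1}\cup C_0| + \frac12|C_0\cup C_1|$ attains its minimal value $2$ with probability $(1-p)^4$, whereas $Y_{1,2}$ attains the value $2$ with probability $(1-p)^6$. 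So a complete proof of (ii) does seem to require a quantitative argument of the kind you sketched; in its present form, however, your proposal proves (i) only.
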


\begin{proof} 
We show (i). 
Applying Liggett's theorem \cite{Li} to $\{E^{\Pp}[U_{m,n}]\}_{m,n}$,  
there is a random variable $Y$ such that 
\[ \frac{E^{\Pp}[U_{n}]}{n} \to Y, \ n \to \infty, \ \ P^{o}\textup{-a.s.} \]  
By Theorem \ref{DisLLN} (i), 
\[ \lim_{n \to \infty} \frac{U_{n}}{n} = c_p,  \textup{ in } L^{1}(\widetilde P^{o, p}).\] 
By Fubini's theorem, 
\[ E^{\Pp}\left[\left|\frac{U_{n}}{n} - c_p\right|\right] \to 0, \ n \to \infty, \  \textup{  in $L^{1}(P^{o})$.}  \] 
By taking a subsequence $(n_{k})_{k}$,  
\[ \frac{E^{\Pp}[U_{n_{k}}]}{n_{k}} \to c_p, \ k \to \infty, \  \textup{ $P^{o}$-a.s.} \] 
Hence we obtain $c_p = Y$, $P^{o}$-a.s.   
We can show (ii) in the same manner.  
\end{proof}

It holds that for $n_1 < n_2 \le n_3 < n_4$, $\overline{U}^{(p)}_{n_1, n_2}$ and $\overline{U}^{(p)}_{n_3, n_4}$ are independent, 
therefore Le Gall's approach \cite{Le86CMP, Le88} is adaptable. 
Recall Remark \ref{indep}.  

Now we have the following CLTs for $\{\overline{U}^{(p)}_{n}\}_n$.  

\begin{Thm}[Variances and CLT]
Let $d \ge 4$. Then,\\
(i) 
\[ c_{\textup{var}, p} := \lim_{n \to \infty} \frac{\va\left(\overline{U}^{(p)}_{n}\right)}{n} \]
exists and is positive.  \\
(ii) 
\[ \frac{\overline{U}^{(p)}_{n} - E \left[\overline{U}^{(p)}_{n} \right]}{\sqrt{c_{\textup{var}, p} n}} \Rightarrow N(0,1), \ n \to \infty, \textup{ in law.} \]
\end{Thm}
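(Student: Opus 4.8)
The plan is to average out the percolation first and then exploit that the resulting process has \emph{independent increments over disjoint time intervals}, which reduces the statement to an adaptation of Theorem~\ref{VaCLT} (itself a transcription of Le Gall \cite[Lemma 4.2]{Le88}).

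First I would record the structural fact that drives everything. Writing $\overline{U}^{(p)}_{m,n}=E^{\Pp}[U_{m,n}]$ and using the translation invariance of Bernoulli percolation,
\[ \overline{U}^{(p)}_{m,n}=\sum_{z\in\Z^d}\Pp\left(C_z\cap\{S_m,\dots,S_n\}\ne\emptyset\right), \]
so $\overline{U}^{(p)}_{m,n}$ is a deterministic function of the increments $\{S_i-S_{i-1}:m<i\le n\}$ alone. Hence for $n_1<n_2\le n_3<n_4$ the variables $\overline{U}^{(p)}_{n_1,n_2}$ and $\overline{U}^{(p)}_{n_3,n_4}$ are independent, and $\{\overline{U}^{(p)}_n\}_n$ sits in the same framework as the discrete sausage $\{U_n(A)\}_n$ of Theorem~\ref{VaCLT}, the fixed set $A$ being replaced by the random cluster $C_z$. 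Since $p<p_T(\Z^d)$, the law of $|C_z|$ has exponential tails by \cite{AV} (cf.\ \eqref{AV3}), so the random ``shape'' $C_z$ has moments of every order, which is all that the sausage argument requires. Note also that a single block is bounded, $0\le\overline{U}^{(p)}_{0,m}\le (m+1)\,E^{\Pp}[|C_0|]$, so square-integrability is automatic.

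For part (i) I would use the exact telescoping decomposition into blocks of length $m$,
\[ \overline{U}^{(p)}_n=\sum_{j=1}^{\lfloor n/m\rfloor}\overline{U}^{(p)}_{(j-1)m,\,jm}-D_{m,n},\qquad D_{m,n}=\sum_{j\ge 2}E^{\Pp}\!\left[\left|\Big(\bigcup_{i\le(j-1)m}C_{S_i}\Big)\cap\Big(\bigcup_{(j-1)m\le i\le jm}C_{S_i}\Big)\right|\right]. \]
Because the block terms are i.i.d., $\va\big(\sum_j\overline{U}^{(p)}_{(j-1)m,jm}\big)=\lfloor n/m\rfloor\,\va(\overline{U}^{(p)}_{0,m})$ \emph{exactly}, and the existence of $c_{\textup{var},p}=\lim_n\va(\overline{U}^{(p)}_n)/n$ would follow from a standard $L^2$ Cauchy argument across block scales, once one shows that the correction obeys $\va(D_{m,n})=o(n)$. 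Positivity of $c_{\textup{var},p}$ I would obtain from a lower bound on $\va(\overline{U}^{(p)}_{0,m})$ for a single block --- for instance by conditioning on $S_m$, whose genuine diffusive spread already forces non-degenerate fluctuations of the covered volume --- exactly as in the sausage case of Theorem~\ref{VaCLT}.

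For part (ii), choosing $m=m(n)\to\infty$ with $n/m\to\infty$ slowly, the i.i.d.\ block sum $\sum_j\overline{U}^{(p)}_{(j-1)m,jm}$ satisfies the classical central limit theorem, so its normalized form converges to $N(0,1)$; finite variance is automatic from the deterministic bound above. It then suffices to show $D_{m,n}/\sqrt n\to 0$ in probability, which is again the $o(n)$ variance estimate, and Slutsky's theorem transfers the limit to $\overline{U}^{(p)}_n=E^{\Pp}[U_n]$ (recalling $E[\overline{U}^{(p)}_n]=\widetilde E^p[U_n]$). The hard part is the estimate $\va(D_{m,n})=o(n)$ in the borderline dimension $d=4$: the expected cross-block overlap grows only logarithmically per adjacent pair, and one must verify that, after optimizing the block length $m$, the accumulated fluctuations of $D_{m,n}$ stay of order $o(n)$ rather than $O(n\log n)$, using the intersection-probability bounds for two independent walks on $\Z^d$, $d\ge4$, as in \cite[Lemma 4.2]{Le88}. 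A secondary difficulty, absent from the fixed-set sausage, is that the clusters $C_z$ are unbounded; I would handle this by conditioning on the percolation configuration and dominating the contribution of atypically large clusters through the exponential size bound \eqref{AV3}, so that the intersection estimates reduce to the deterministic-shape case.
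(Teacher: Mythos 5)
Your proposal is correct and takes essentially the same route as the paper: the paper's entire proof consists of the observation (stated just before the theorem, with Remark \ref{indep}) that $\overline{U}^{(p)}_{n_1,n_2}$ and $\overline{U}^{(p)}_{n_3,n_4}$ are independent for disjoint time intervals --- precisely your structural fact via translation invariance of $\Pp$ --- followed by the instruction to imitate Le Gall's proof of \cite[Lemma 4.2]{Le88} with some alterations. Your block decomposition, the $o(n)$ variance control of the overlap term $D_{m,n}$, and the exponential-tail treatment of unbounded clusters via \eqref{AV3} are exactly the ``alterations'' the paper leaves implicit.
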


We can show this by imitating the proof of \cite[Lemma 4.2]{Le88} with some alterations.

\begin{Thm}
Let $d = 2$. 
Let $\gamma$ be the self-intersection local time of two-dimensional Brownian motion. 
Then, 
\[ \frac{(\log n)^2}{n} \left( \overline{U}^{(p)}_{n} - E \left[ \overline{U}^{(p)}_{n} \right] \right) \Rightarrow -2\pi^2 \gamma, \ n \to \infty, \textup{ in law.} \]
\end{Thm}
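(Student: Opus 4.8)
The plan is to adapt Le Gall's argument from \cite[Theoreme 6.1]{Le86CMP} (see also \cite{Le88}), exactly as in the proof of Theorem \ref{2-CLT-det}, but now exploiting the structural simplification afforded by the percolation averaging. The starting point is that $\overline{U}^{(p)}_{m,n}$ is a \emph{deterministic} functional of the trace $S[m,n]$, namely
\[ \overline{U}^{(p)}_{m,n} = \sum_{x \in \Z^2} \Pp\left(x \leftrightarrow S[m,n]\right), \]
so that the percolation randomness has been entirely absorbed. For a dyadic decomposition of $[0,n]$ one has the exact identity
\[ \overline{U}^{(p)}_{0,n} = \overline{U}^{(p)}_{0,m} + \overline{U}^{(p)}_{m,n} - E^{\Pp}\left[\left|\Big(\bigcup_{i \in [0,m]} C_{S_i}\Big) \cap \Big(\bigcup_{i \in [m,n]} C_{S_i}\Big)\right|\right], \]
so that, after centering and iterating the decomposition, the fluctuations of $\overline{U}^{(p)}_n - E[\overline{U}^{(p)}_n]$ are governed entirely by the centered intersection functionals of the percolation-averaged sausages over disjoint time blocks. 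These intersection terms are the mechanism producing the renormalized self-intersection local time $\gamma$ in the limit.

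The decisive point, which is precisely the contrast with Remark \ref{indep}, is the following. For two \emph{independent} walks $S^1, S^2$ the full sausage volumes $|\bigcup_i C_{S^1_i}|$ and $|\bigcup_i C_{S^2_i}|$ remain dependent, because they are read off from one common configuration; but their percolation averages $\overline{U}^{(p)}(S^1)$ and $\overline{U}^{(p)}(S^2)$ are genuinely independent, being deterministic functions of the two independent walks. Consequently, after conditioning on the separating positions and using the Markov property to replace the two blocks by independent walk pieces, each pairwise intersection term reduces to the percolation-averaged intersection volume $f_p(n) = E^{P^{0,0} \otimes \Pp}[I_n]$ of two independent walks studied in Proposition \ref{inter}. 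By \eqref{inter-conv}, $f_p(n) - f_0(n) = o\!\left(n/(\log n)^2\right)$, so at the scale $(\log n)^2/n$ every such term agrees with its $p = 0$ counterpart. This is exactly the ingredient that makes Le Gall's approach applicable here, whereas it fails for the un-averaged process $\{U_n\}_n$.

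With these two inputs in place, I would run Le Gall's convergence argument verbatim: one shows that the scaled centered intersection functionals converge jointly to the corresponding functionals of planar Brownian motion, identifies the dominant single-block self-intersection contribution, and checks that the cross-scale contributions vanish, obtaining that $\frac{(\log n)^2}{n}(\overline{U}^{(p)}_n - E[\overline{U}^{(p)}_n])$ converges in law to $-2\pi^2 \gamma$, the same limit as for the range and as in Theorem \ref{2-CLT-det}. Throughout, the subcritical exponential decay of cluster sizes \cite{AV} together with the boundary estimate \eqref{L-2nd} provides the integrability and moment bounds needed to justify the passage to the limit and to control the error introduced by replacing $f_p$ by $f_0$.

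The main obstacle is quantitative rather than conceptual: Proposition \ref{inter} controls the discrepancy between $f_p$ and $f_0$ at a \emph{single} scale, whereas the iterated decomposition produces $O(\log n)$ pairs of blocks of widely varying sizes, and the accumulated error must still be $o\!\left(n/(\log n)^2\right)$ after multiplication by $(\log n)^2/n$. Establishing this requires a version of \eqref{inter-conv} that is uniform in the block length, which in turn rests on the scaling behaviour of planar intersection probabilities and on the uniform cluster-size tail bound from \cite{AV}; assembling these uniform estimates and verifying the summability of the errors is where the bulk of the work lies.
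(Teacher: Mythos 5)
Your proposal takes essentially the same route as the paper: the paper's proof is exactly an adaptation of Le Gall's argument from \cite[Theoreme 6.1]{Le86CMP}, resting on the two observations you isolate, namely that $\overline{U}^{(p)}_{n_1,n_2}$ and $\overline{U}^{(p)}_{n_3,n_4}$ are independent for disjoint time blocks (being deterministic, translation-invariant functionals of independent increments of the walk), and that the cross-intersection terms are controlled via Proposition \ref{inter} and \eqref{inter-conv}, precisely as indicated in Remark \ref{indep}. The uniformity-over-scales issue you flag at the end is a genuine technical point, but the paper leaves it implicit as well, giving no details beyond the citation of Le Gall.
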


We can show this assertion in the same manner as in the proof of \cite[Theoreme 6.1]{Le86CMP}.

\begin{Rem} 
(i) $\va_{p}(U_n) \ge \va\left(\overline{U}^{(p)}_{n}\right)$. \\ 
(ii) We are not sure properties for $c_{\textup{var}, p}$ as a function of $p$.  
\end{Rem}

\vspace{1\baselineskip}  

{\it Acknowledgement.}  
The author thanks to N. Kubota for proposing questions of this kind and discussions.  
He also thanks to R. Fukushima, P. Mathieu, I. Okada and M. Takei for suggestions and comments.  
This work was supported by  Grant-in-Aid for Research Activity Start-up (15H06311) and for JSPS Research Fellows (24.8491, 16J04213).


\begin{thebibliography}{99}
\bibitem[AV08]{AV} T. Antunovi\'c and I. Veseli\'c, Sharpness of the phase transition and exponential decay of the subcritical cluster size for percolation on quasi-transitive graphs, 
{\it J. Stat. Phys.} 130, (2008) 983-1009. 
\bibitem[AS15]{AS1} A. Asselah and B. Schapira, Boundary of the Range of Transient Random Walk, {\it Probab. Theory and Related Fields}, 168 (2017) 691-719.  
\bibitem[AS16]{AS2} A. Asselah and B. Schapira, Moderate deviations for the range of a transient random walk: path concentration, {\it Ann. Sci. \'Ec. Norm. Sup\'er. (4)} 50 (2017) 755-786. 
\bibitem[BJKS08]{BJKS} M. T. Barlow, A. A. J\'arai, T. Kumagai, and G. Slade, Random walk on the incipient infinite cluster for oriented percolation in high dimensions. {\it Comm. Math. Phys.} 278 (2008) 385-431. 
\bibitem[BR06]{BR} B. Bollob\'as and O. Riordan, {\it Percolation}, Cambridge University Press, 2006. 
\bibitem[BL83]{BL83} H. Br\'ezis, and E. Lieb, A relation between pointwise convergence of functions and convergence of functionals, {\it Proc. Amer. Math. Soc.} 88 (1983), no. 3, 486-490.  
\bibitem[D10]{D} R. Diestel, Graph theory, Fourth edition, Springer, 2010. 
\bibitem[DV79]{DV79} M. D. Donsker and S. R. S. Varadhan, On the number of distinct sites visited by a random walk, {\it Comm. Pure. Appl. Math.} 32 (1979) 721-747. 
\bibitem[DE51]{DE} A. Dvoretzky and P. Erd\"os, Some problems on random walk in space, {\it Proc. Second Berkeley Symp. on Math. Stat. and Prob.}, 353-368, Univ. of California Press, 1951. 
\bibitem[FH17]{FH17} R. Fitzner and R. van der Hofstad,  Mean-field behavior for nearest-neighbor percolation in $d > 10$, {\it Electron. J. Probab.}  22 (2017), no. 43, 1-65.
\bibitem[FN93]{FN} L. Fontes and C. M. Newman, First passage percolation for random colorings of $\Z^d$, {\it Ann. Appl. Prob.}, 3 (1983) 746-762; Erratum, 4, (1994) 254.  
\bibitem[Gi08]{Gi} L. R. Gibson, The mass of sites visited by a random walk on an infinite graph, {\it Elec. Comm. Probab.} 13 (2008) 1257-1282.
\bibitem[Gr99]{Gr} G. R. Grimmett, \textit{Percolation}, Springer-Verlag, 2nd ed., 1999.
\bibitem[GP97]{GP} G. R. Grimmett and M. S. T. Piza, Decay of correlations in random-cluster model, {\it Comm. Math. Phys}. 189 (1997) 465-480.  
\bibitem[H01]{H} Y. Hamana, Asymptotics of the Moment Generating Function for the Range of Random Walks, {\it J. Theoret. Probab.} 14 (2001) 189-197. 
\bibitem[HHH14]{HHH} M. Heydenreich, R. van der Hofstad, T. Hulshof, Random Walk on the High-Dimensional IIC, {\it Comm. Math. Phys.} 329 (2014) 57-115. 
\bibitem[HW08]{HW08} Y. Higuchi and X.-Y. Wu, Uniqueness of the critical probability for percolation in the two-dimensional Sierpi\'nski carpet lattice,  {\it Kobe J. Math.} 25 (2008), no. 1-2, 1-24. 
\bibitem[JP70]{JP70} N. C. Jain and W. E. Pruitt, The range of recurrent random walk in the plane, {\it Z. Wahr. Verw. Gebiete} 16 (1970) 279-292.  
\bibitem[JP72]{JP72} N. C. Jain and W. E. Pruitt, The range of random walk. {\it Proceedings of the Sixth Berkeley Symposium on Mathematical Statistics and Probability (Univ. California, Berkeley, Calif., 1970/1971)}, Vol. III, 31-50. Univ. California Press, Berkeley, Calif., 1972.
\bibitem[KS63]{KS} H. Kesten and F. Spitzer, Ratio theorems for random walks I, {\it J. Analyse Math.} 11 (1963) 285-321.  
\bibitem[KN09]{KN09} G. Kozma,  A. Nachmias, The Alexander-Orbach conjecture holds in high dimensions. {\it Invent. Math.} 178 (2009), no. 3, 635-654.
\bibitem[K97]{K97} T. Kumagai, Percolation on pre-Sierpinski carpets. {\it New trends in stochastic analysis (Charingworth, 1994), 288-304, World Sci. Publ., River Edge, NJ}, 1997.
\bibitem[K14]{Ku} T. Kumagai, Random walks on disordered media and their scaling limits, \'Ecole d'\'Et\'e de Probabilit\'es de Saint-Flour XL-2010,  Lecture Notes in Mathematics, 2101, Springer, 2014. 
\bibitem[KM08]{KM} T. Kumagai and J. Misumi, Heat kernel estimates for strongly recurrent random walk on random media. {\it J. Theoret. Probab.} 21 (2008), no. 4, 910-935. 
\bibitem[La96]{La} G. Lawler, {\it Intersections of random walks}, Birkh\"auser, 1996.  
\bibitem[Le86CMP]{Le86CMP} J.-F. Le Gall, Propriet\'es d'intersection des marches al\'eatoires. {\it Comm. Math. Phys.} 104 (1986) 471-507. 
\bibitem[Le86AOP]{Le86AOP} J.-F. Le Gall, Sur la saucisse de Wiener et les points multiples du mouvement brownien. {\it Ann Probab.} 14 (1986) 1219-1244. 
\bibitem[Le88]{Le88} J.-F. Le Gall, Fluctuation results for the Wiener sausage, {\it Ann. Probab.} 16 (1988)  991-1018.
\bibitem[Li85]{Li} T. M. Liggett, An improved subadditive ergodic theorem, {\it Ann. Probab.} 13 (1985) 1279-1285.   
\bibitem[Okad14]{Okad} I. Okada, The inner boundary of random walk range, {\it J. Math. Soc. Japan} 68 (2016) 939-959.  
\bibitem[Okam14]{O14} K. Okamura, On the range of random walk on graphs satisfying a uniform condition, 
{\it ALEA Lat. Am. J. Probab. Math. Stat.} 11 (2014), no. 1, 341-357.
\bibitem[Okam17]{O} K. Okamura, Enlargement of subgraphs of infinite graphs by Bernoulli percolation,  {\it Indag. Math. (N.S.)} 28 (2017), no. 4, 832-853.
\bibitem[Okam-ws]{Ows} K. Okamura, Long time behavior of the volume of the Wiener sausage on Dirichlet spaces, in preparation.   
\bibitem[P65]{P65} S. C. Port, Limit theorems involving capacities for recurrent Markov chains, {\it J. Math. Anal. Appl.} 12 (1965) 555-569.
\bibitem[P66]{P66} S. C. Port,  Limit theorems involving capacities, {\it J. Math. Mech.} 15 (1966) 805-832. 
\bibitem[PS68]{PS68} S. C. Port and C. J. Stone,  Hitting times for transient random walks. {\it J. Math. Mech.} 17 (1968) 1117-1130. 
\bibitem[PS69]{PS69} S. C. Port and C. J. Stone,  Potential theory of random walks on Abelian groups, {\it Acta Math.} 122 (1969) 19-114.
\bibitem[Sh96]{Sh96} M. Shinoda, Percolation on the pre-Sierpinski gasket, {\it Osaka J. Math.} 33 (1996), no. 2, 533-554.
\bibitem[Sh02]{Sh02} M. Shinoda, Existence of phase transition of percolation on Sierpinski carpet lattices, {\it J. Appl. Probab.} 39 (2002), no. 1, 1-10. 
\bibitem[Sh03]{Sh03} M. Shinoda, Non-existence of phase transition of oriented percolation on Sierpinski carpet lattices. {\it Probab. Theory Related Fields} 125 (2003), no. 3, 447-456.
\bibitem[Sp64]{S64} F. Spitzer, Electrostatic capacity, heat flow and Brownian motion, {\it Z. Wahr. Verw. Gebiete}, 3 (1964) 110-121.
\bibitem[Sp76]{S76} F. Spitzer, {\it Principles of Random Walk}, second edition, Springer, 1976.
\bibitem[W00]{W} W. Woess, {\it Random walks on infinite graphs and groups}, Cambridge University Press, Cambridge, 2000.  
\end{thebibliography}
\end{document}